\pgfplotsset{compat=1.15}
\pgfplotsset{every axis/.append style={
                    label style={font=\tiny},
                    tick label style={font=\tiny}  
                    }}
\newtheorem{thm}{Theorem}
\numberwithin{thm}{section}
\newtheorem{cor}[thm]{Corollary}
\newtheorem{lem}[thm]{Lemma}
\newtheorem{prop}[thm]{Proposition}
\theoremstyle{definition}
\newtheorem{rem}[thm]{Remark}
\newcommand{\R}{\mathbb{R}}
\newcommand{\Z}{\mathbb{Z}}
\newcommand{\cvec}[2]{\begin{pmatrix} #1 \\ #2 \end{pmatrix}}
\newcommand{\slr}{{\rm SL}_2(\R)}
\newcommand{\pslr}{{\rm PSL}_2(\R)}
\newcommand{\glr}{{\rm GL}_2(\R)}
\newcommand{\slz}{{\rm SL}_2(\Z)}
\DeclarePairedDelimiter\floor{\lfloor}{\rfloor}
\begin{document}


\title{Slope Gap Distribution of Saddle Connections on the $2n$-gon}

\author[J.~Berman]{Jonah Berman}
\address{Department of Mathematics \\
Yale University\\
10 Hillhouse Avenue\\
New Haven, CT 06520}
\email{\href{mailto:jonah.berman@yale.edu}{jonah.berman@yale.edu}}

\author[T.~McAdam]{Taylor McAdam}
\address{Department of Mathematics \\
Yale University\\
10 Hillhouse Avenue\\
New Haven, CT 06520}
\email{\href{mailto:taylor.mcadam@yale.edu}{taylor.mcadam@yale.edu}}

\author[A.~Miller-Murthy]{Ananth Miller-Murthy}
\address{Department of Mathematics \\
Yale University\\
10 Hillhouse Avenue\\
New Haven, CT 06520}
\email{\href{mailto:ananth.miller-murthy@yale.edu}{ananth.miller-murthy@yale.edu}}

\author[C.~Uyanik]{Caglar Uyanik}
\address{Department of Mathematics \\
University of Wisconsin, Madison  \\
480 Lincoln Drive\\
Madison, WI 53706}
\email{\href{mailto:caglar@math.wisc.edu}{caglar@math.wisc.edu}}

\author[H.~Wan]{Hamilton Wan}
\address{Department of Mathematics \\
Yale University\\
10 Hillhouse Avenue\\
New Haven, CT 06520}
\email{\href{mailto:hamilton.wan@yale.edu}{hamilton.wan@yale.edu}}

\begin{abstract} 
We explicitly compute the limiting slope gap distribution for saddle connections on any $2n$-gon for $n\geq 3$. Our calculations show that the slope gap distribution for a translation surface is not always unimodal, answering a question of Athreya. 
We also give linear lower and upper bounds for number of non-differentiability points as $n$ grows. The latter result exhibits the first example of a non-trivial bound on an infinite family of translation surfaces and answers a question by Kumanduri-Wang. 
\end{abstract}

\maketitle

\section{Introduction}

Consider, for $n\ge3$, a regular $2n$-gon $O_{2n}$ in the plane $\mathbb{R}^2\cong\mathbb{C}$ where the opposite sides of $O_{2n}$ are glued by Euclidean translations. The resulting surface, also denoted by $O_{2n}$, is topologically a closed surface. If $n=2k$, then $O_{2n}$ is a surface of genus $k$ and has a single cone point of angle $2\pi(n-1)$. If $n=2k+1$, then $O_{2n}$ is a surface of genus $k$, with two cone points where the cone angles are both $\pi(n-1)$ \cite{SU}. For example, gluing the opposite sides of the regular decagon yields a genus $2$ surface with two cone points (marked with black and red), where at each of the cone points the total angle is $4\pi$. See section \ref{translationsurfaces} for details. 

\bigskip
\begin{figure}[h!]
\centering
\begin{tikzpicture}[scale=1.5, rotate=0]

\foreach \pos in {0,5} {\draw[red] ({cos(36*\pos)}, {sin(36*\pos)})--({cos(36+36*\pos)},{sin (36+36*\pos)});}
\foreach \pos in {1,6} {\draw[green] ({cos(36*\pos)}, {sin(36*\pos)})--({cos(36+36*\pos)},{sin (36+36*\pos)});}
\foreach \pos in {2,7} {\draw[orange] ({cos(36*\pos)}, {sin(36*\pos)})--({cos(36+36*\pos)},{sin (36+36*\pos)});}
\foreach \pos in {3,8} {\draw[blue] ({cos(36*\pos)}, {sin(36*\pos)})--({cos(36+36*\pos)},{sin (36+36*\pos)});}
\foreach \pos in {4,9} {\draw[purple] ({cos(36*\pos)}, {sin(36*\pos)})--({cos(36+36*\pos)},{sin (36+36*\pos)});}

\node at ({cos(36*1)},{sin(36*1)}) [circle,color=black,fill,inner sep=0.6pt]{}; 
\node at ({cos(36*7)},{sin(36*7)}) [circle,color=black,fill,inner sep=0.6pt]{}; 
\node at ({cos(36*3)},{sin(36*3)}) [circle,color=black,fill,inner sep=0.6pt]{}; 
\node at ({cos(36*9)},{sin(36*9)}) [circle,color=black,fill,inner sep=0.6pt]{}; 
\node at ({cos(36*5)},{sin(36*5)}) [circle,color=black,fill,inner sep=0.6pt]{}; 
\node at ({cos(36*2)},{sin(36*2)}) [circle,color=red,fill,inner sep=0.6pt]{}; 
\node at ({cos(36*4)},{sin(36*4)}) [circle,color=red,fill,inner sep=0.6pt]{}; 
\node at ({cos(36*6)},{sin(36*6)}) [circle,color=red,fill,inner sep=0.6pt]{}; 
\node at ({cos(36*8)},{sin(36*8)}) [circle,color=red,fill,inner sep=0.6pt]{}; 
\node at ({cos(36*10)},{sin(36*10)}) [circle,color=red,fill,inner sep=0.6pt]{}; 
\end{tikzpicture}
\caption{The regular decagon, $O_{10}$. Black vertices are identified together and red vertices are identified together.}
\end{figure}
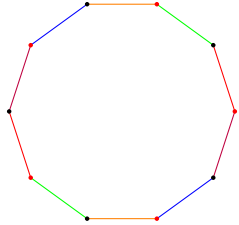

Outside of the finitely many cone points, the surface $O_{2n}$ is locally Euclidean and hence one can talk about straight lines. A \emph{saddle connection} $\gamma$ on $O_{2n}$ is a straight line trajectory that connects a cone point to another cone point without passing through a cone point in the interior. To each saddle connection $\gamma$ one can assign a holonomy vector $v_{\gamma}$ which records how far $\gamma$ travels in $\mathbb{C}$. 

In this article we are interested in fine statistical properties of the set of holonomy vectors on $O_{2n}$.
Veech proved that the number of saddle connections (rather, the set of holonomy vectors associated to saddle connections) grow quadratically with length, and saddle connection directions equidistribute on $S^1$ with respect to Lebesgue measure \cite{Vee98,Vorobets}, which suggests that saddle connection directions appear ``randomly.'' As a finer notion of randomness, one can study the \emph{gaps} between saddle connections \cite{Ath13}. Since the set of saddle connections are symmetric with respect to the coordinate axes, we consider the set of holonomy vectors that lie in the first quadrant. 

Let $\mathbb{S}_{2n}$ denote the set of slopes of holonomy vectors $v_{\gamma}$ of $O_{2n}$ with positive real component and non-negative imaginary component. Write $\mathbb{S}_{2n}$ as an increasing union of $\mathbb{S}_{2n}(k)$ for $k\to\infty$, where 
\[\mathbb{S}_{2n}(k)=\left\{v_{\gamma}\mid 0<\mathrm{Re}(v_{\gamma})\le k, \ \mathrm{Im}(v_\gamma)\geq 0\right\},
\]
and write the slopes in $\mathbb{S}_{2n}(k)$ in increasing order, denoted by
\[
\mathbb{S}_{2n}(k)=\left\{0=s_0^k<s_1^k<s^k_2<\cdots\right\}.
\]
We are interested in the distribution of gaps $s^k_i - s^k_{i-1}$ between consecutive slopes in $\mathbb{S}_{2n}(k)$ as $k\to\infty$.  It can be shown that the gaps in $\mathbb{S}_{2n}(k)$ eventually repeat after an index $N(k)$ depending on $k$ (for details, see Section \ref{slopegapdistributions}).  Moreover, Veech shows in \cite{Vee98} that for any Veech  (lattice) surface, $N(k)$ grows quadratically with $k$, see also \cite{Vorobets}.  Thus we define the set of renormalized slope gaps on $O_{2n}$ as
\[
\mathcal{G}_{2n}(k)=\left\{ k^2 (s_i^k-s_{i-1}^k) \ \vline \ 1\le i\le N(k), \ s_i^{k}\in\mathbb{S}_{2n}(k)\right\}.
\]
We have the following theorem for the limiting distribution of these sets.

\begin{thm}\label{distribution_description}
For any regular $2n$-gon $O_{2n}$ where $n\ge3$, there exists a limiting probability distribution function $f: \mathbb{R} \to [0,\infty)$ such that \[\lim_{k \to \infty} \frac{|\mathcal{G}_{2n}(k) \cap (a,b)|}{N(k)} = \int_{a}^b f(x) \ dx.\] The distribution is a piecewise analytic function with finitely many domains of analyticity that are describable in terms of integrals of elementary functions. Moreover, the distribution has finitely many points of non-differentiability, has no support at $0$, and has a quadratic tail. 
\end{thm}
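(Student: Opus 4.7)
The plan is to follow the by-now standard approach to slope gap distributions on Veech surfaces, which translates the gap problem into a return-time problem for the horocycle flow on $\slr/\Gamma_{2n}$, where $\Gamma_{2n}$ is the Veech (affine) group of $O_{2n}$. Concretely, applying $\mathrm{diag}(k,k^{-1})$ to the holonomy vectors in $\mathbb{S}_{2n}(k)$ converts the renormalized gaps $k^2(s^k_i-s^k_{i-1})$ into differences of slopes of holonomy vectors with real part at most $1$, and these differences are precisely the return times of the horocycle flow $h_{-s}=\begin{pmatrix} 1 & 0 \\ -s & 1\end{pmatrix}$ to a transversal $\Omega_{2n}\subset\slr/\Gamma_{2n}$ consisting of translates of $O_{2n}$ carrying a specified \emph{short} saddle connection. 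Because $\Gamma_{2n}$ is a lattice (the Hecke triangle group of type $(2,n,\infty)$), long horocycle orbits equidistribute with respect to Haar measure, and a standard argument then identifies the limiting slope gap distribution with the distribution of the return time against the induced invariant probability measure on $\Omega_{2n}$.

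The core task is therefore to describe $\Omega_{2n}$ and the first return map $R$ explicitly. First, I would use the known fundamental domain for the Hecke triangle group $\Gamma_{2n}$ to identify a finite set of $\Gamma_{2n}$-orbit representatives for short saddle connections on $O_{2n}$, and parameterize $\Omega_{2n}$ by planar coordinates $(x,y)$ recording the short vector. Second, I would decompose $\Omega_{2n}$ into finitely many subregions $\Omega_{2n}^{(j)}$ according to the combinatorial type of which candidate saddle connection is the \emph{next} to be revealed by the horocycle flow; on each such region, elementary trigonometry produces a closed-form algebraic expression for the return time $R(x,y)$. The limiting density
\[
f(t)=-\frac{1}{\mu(\Omega_{2n})}\,\frac{d}{dt}\mu\bigl(\{(x,y)\in\Omega_{2n}:R(x,y)>t\}\bigr)
\]
is then piecewise analytic with one analytic piece per region, each expressible as an integral of elementary functions.

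The remaining qualitative properties all fall out of this explicit description. A uniform positive lower bound for $R$ on $\Omega_{2n}$ forces $f$ to vanish near $0$. Non-differentiability of $f$ can occur only at values $t$ where the level set $\{R>t\}$ undergoes a combinatorial change, i.e.~at the finitely many boundary values between regions $\Omega_{2n}^{(j)}$. The quadratic tail comes from the fact that arbitrarily large return times correspond to horocycle excursions into the single cusp of $\slr/\Gamma_{2n}$; a direct estimate in cusp coordinates, consistent with Veech's quadratic growth for the counting function, pins down the $t^{-2}$ decay.

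The main obstacle will be the explicit combinatorial analysis for arbitrary $n$: enumerating $\Gamma_{2n}$-orbit representatives for short saddle connections in a form that is uniform in $n$, and carrying out the case analysis that produces the regions $\Omega_{2n}^{(j)}$ together with their return-time formulas. I would expect a uniform framework to emerge only after unpacking a few small cases ($n=3,4,5$) by hand and then extrapolating the pattern; once in place, the linear upper and lower bounds on the number of non-differentiability points advertised in the abstract should come out as a count of these regions, while the non-unimodality claim should be visible by evaluating $f$ at a pair of sample points inside explicit adjacent analytic pieces.
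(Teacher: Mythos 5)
Your proposal follows essentially the same route as the paper: reframe the renormalized gaps as return times of the horocycle flow to Athreya's Poincar\'e section over $\slr/\Gamma$, use equidistribution of long (periodic) horocycle orbits to identify the limit with the measure of sublevel sets of the return time, and read off piecewise analyticity, the absence of support at $0$, finiteness of non-differentiability points, and the quadratic tail from the explicit piecewise description of $R(x,y)$. Two small corrections: the Veech group of $O_{2n}$ is the $(n,\infty,\infty)$ triangle group with \emph{two} cusps, so the section splits into two components indexed by the two $\Gamma$-orbits of saddle connections, and the paper simplifies the combinatorics by first passing to a staircase surface in the $\glr$-orbit of $O_{2n}$ rather than working with the polygon directly.
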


Theorem \ref{distribution_description} is a special case of the results contained in \cite{Ath13,KumanduriWang}, and it tells us that the slopes of the $2n$-gon are not randomly distributed because if they were, we would expect the slope gap distribution to have support at $0$ and an exponential tail. The novelty of this paper is that our results allow one to explicitly calculate the distribution function $f$ in an algorithmic way for any $n$. For examples of the distributions for a variety of $n$, see Figure \ref{trgt3}. One consequence of our calculations is the observation that the slope gap distribution of a translation surface is not always unimodal (that is, it can have multiple local maxima).  To our knowledge, this is the first example of such a distribution, answering a question of Athreya.

Following the strategy in \cite{ACL, AC13} and \cite{UW}, we will translate the problem of finding the slope gap distribution into a problem involving dynamics. In particular, we will exploit the slope gap preserving properties of the horocycle flow 
and study return times (under the horocycle flow) of translation surfaces to an appropriate Poincar\'e section, i.e. a lower dimensional subset of the space through which almost every horocycle orbit passes in a nonempty, discrete set of times.


The information needed to explicitly calculate the distribution is a generalized description of the first return time function of the horocycle flow pertaining to the slope gaps of the regular unit $2n$-gon, as described in the following theorem.

\begin{thm}\label{return_time_description} There is a staircase surface $\mathcal{S}_{2n}$ in the $\glr$-orbit of $O_{2n}$ for which a Poincar\'e section to the horocycle flow on $\slr\cdot\mathcal{S}_{2n}$ can be parametrized by two disjoint triangles. In these coordinates the return time function is a piecewise function which can be described by $n+1$ elementary functions of the form \[R(x,y) = \frac{b}{x(ax + by)}\]
where $a$ and $b$ are constants with explicit formulas computed in Propositions \ref{small prop} and \ref{big prop}.
\end{thm}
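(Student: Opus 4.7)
Following the strategy of \cite{ACL, AC13, UW}, I would first construct an explicit element of $\glr$ transforming $O_{2n}$ into a staircase polygon $\mathcal{S}_{2n}$ with only horizontal and vertical edges. This is a standard unfolding argument for Veech surfaces with $D_n$-symmetry; the point is that $\mathcal{S}_{2n}$ admits an explicit horizontal cylinder decomposition, so the short horizontal holonomy vectors and the parabolic element of the Veech group stabilizing the horizontal direction are immediate to write down. Since the transformation lives in $\glr$, the $\slr$-orbit is preserved and no dynamical information is lost.

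Next, I would define the Poincar\'e section $\Omega_{2n} \subset \slr \cdot \mathcal{S}_{2n}$ as the set of surfaces on which the shortest horizontal saddle connection has length at most one, parametrized by $(x,y)$ where $x$ is this length and $y$ is the horizontal component of the saddle connection realizing the smallest positive slope (subject to a length bound). Using the $D_n$-symmetry and the Veech group to quotient out redundancies, I would cut the image down to two disjoint triangles --- one per cone-point type arising in the staircase model. For the return time function: the horocycle flow sends a holonomy vector $(a,b)$ to $(a+tb,b)$, and after applying the renormalization matrix that rescales the section back to itself, a direct calculation shows that the time for a candidate next saddle connection with parameters $(a,b)$ to become horizontal is
\[
R_{(a,b)}(x,y) \;=\; \frac{b}{x(ax + by)}.
\]
The first return time is the pointwise minimum of these expressions over the admissible candidates.

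The main obstacle, and where most of the work lies, is the classification of the finitely many candidate vectors $(a,b)$ that can realize this minimum on some open subset of the section. I would enumerate the short saddle connections on $\mathcal{S}_{2n}$ that are visible from $\Omega_{2n}$ using its explicit staircase geometry, and eliminate redundancies using relations in the Veech group, aiming to prove that exactly $n+1$ candidates survive. The explicit constants $a$ and $b$ for each piece are then read off from the holonomy computation and match the formulas stated in Propositions \ref{small prop} and \ref{big prop}. The combinatorial bookkeeping --- ensuring no candidate is missed and that the domains where each candidate achieves the minimum tile the section --- is where the argument requires the most care, and is the step I expect to grow in complexity with $n$.
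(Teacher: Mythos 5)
Your plan follows essentially the same route as the paper: shear-and-scale $O_{2n}$ to a staircase, take Athreya's Poincar\'e section parametrized as two triangles, derive $R(x,y)=\frac{b}{x(ax+by)}$ from the winning holonomy vector, and reduce the whole theorem to classifying the finitely many winners. Two small caveats: the two triangles correspond to the two cusps of the Veech group (equivalently, the two $\Gamma$-orbits of saddle connections), not to cone-point types; and, as you yourself anticipate, the real content is the candidate-elimination argument establishing exactly $n$ winners on $\Omega_1$ and one on $\Omega_2$ (Propositions \ref{small prop} and \ref{big prop}), which your proposal defers and which occupies the paper's appendix.
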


The slope gap distribution of the $2n$-gon is then easily determined from the slope gap distribution of the staircase surface by a simple rescaling. See Section \ref{flatgeometryof2ngon} for a description of this staircase surface. Furthermore, building on Theorems \ref{distribution_description} and \ref{return_time_description}  we prove: 

\begin{restatable}{thm}{thmbounds}
\label{linear_upper_bound}
The number of non-differentiable points in the slope gap distribution for a regular 2n-gon has linear lower and upper bounds. In particular, \[ \frac{n}{5} -11 \leq \#\text{(Non-Differentiable Points)} \leq 2n +\left \lfloor \frac{n}{2} \right\rfloor + 1.\] 
\end{restatable}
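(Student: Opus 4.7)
The plan is to exploit the explicit piecewise form of the return time function $R$ given by Theorem~\ref{return_time_description} and relate non-differentiability of the slope gap distribution $f$ to critical values of $R$ on the Poincar\'e section $\Omega$. Writing $A(t) = \mu\{(x,y)\in\Omega : R(x,y)\le t\}$ for the sub-level area, $f(t)$ is proportional to $-A'(t)$, and a co-area argument shows that $f$ is real-analytic on any interval of $t$ throughout which the level set $\{R=t\}$ preserves its combinatorial type, i.e., meets the same sequence of pieces and edges in the partition of $\Omega$. Potential non-differentiabilities of $f$ are therefore confined to the finite set of critical values $t = R(v)$ as $v$ ranges over the vertices of the partition of $\Omega$ into the $n+1$ analytic pieces of Theorem~\ref{return_time_description}, together with a handful of corner values coming from the boundary of $\Omega$ itself.

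For the upper bound I would enumerate these vertices directly from the explicit formulas in Propositions~\ref{small prop} and \ref{big prop}. The two triangles making up $\Omega$ each carry a triangulation into $O(n)$ pieces; because adjacent pieces share vertices, and because the symmetries of the $2n$-gon force many $R$-values to coincide, a careful bookkeeping produces at most $2n + \lfloor n/2\rfloor + 1$ distinct candidates, which gives the stated upper bound.

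For the lower bound I would isolate an explicit subfamily of vertices — spaced roughly five apart, to produce the $n/5$ in the statement — whose $R$-values are pairwise distinct and well-separated. At each selected $v$ with $t_v = R(v)$, the contribution to $A$ from the pieces meeting at $v$ carries a leading non-smooth expansion (typically a logarithmic or fractional-power term in $t - t_v$) coming from the vertex geometry, while every other piece is analytic across $t_v$. Explicitly computing the coefficient of this singular term directly from $R(x,y) = b/(x(ax+by))$ and verifying that it is nonzero forces $f$ to be non-differentiable at $t_v$, and summing over the chosen subfamily yields the linear lower bound.

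The principal obstacle is precisely this non-cancellation step. A priori the singular contributions of several pieces meeting at $v$ could conspire to cancel exactly, leaving $f$ analytic through $t_v$; ruling this out requires expanding each adjacent piece's contribution near $t_v$ and checking that the resulting coefficients cannot sum to zero. The sparsity of the chosen subfamily — one vertex in every five — is what makes this tractable: separating the selected vertices sufficiently in parameter ensures that the singular contributions at distinct $t_v$ are independent, so once each individual coefficient is shown to be nonzero by a direct trigonometric computation using the explicit $a,b$ from Propositions~\ref{small prop} and \ref{big prop}, the linear lower bound follows. The upper bound, by contrast, is essentially a clean but somewhat intricate enumeration once the vertex coincidences imposed by the $2n$-gon symmetry are properly accounted for.
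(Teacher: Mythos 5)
Your overall framework---reducing non-differentiability of the distribution to changes in the combinatorial type of the level set $\{R=t\}$ relative to the partition of $\Omega$, then enumerating candidate times for the upper bound and exhibiting a distinct, non-cancelling subfamily for the lower bound---is the same as the paper's. But there are two concrete gaps. First, your candidate set is too small: you confine potential non-differentiability points to the values $R(v)$ at vertices $v$ of the partition (plus corners of $\Omega$), but the combinatorial type of $\{R=t\}$ also changes when the hyperbola becomes tangent to the \emph{interior of an edge} of some $P_i$, i.e.\ at the first root of the discriminant of the quadratic obtained by intersecting $R(x,y)=t$ with an edge line. These tangency times --- $t=4$ on $P_1$, $t=8\cos(\pi/n)$ on $P_2$, $t=l(i,n)=4\sin(i\pi/n)\csc(\pi(i-1)/n)$ on the middle regions, $t=2\sec(\pi/n)$ on $P_{n-1}$ --- are genuine candidates (near a tangency the area deficit grows like $(t-t_0)^{3/2}$, so $f$ fails to be differentiable there), and the paper's count of $2n+\lfloor n/2\rfloor+1$ is built around exactly these edge tangencies together with the vertex crossings. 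An enumeration of vertex values alone does not justify the stated upper bound.

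Second, for the lower bound your assertion that ``every other piece is analytic across $t_v$'' is precisely the statement that has to be proved, and it is where essentially all the work lies. The danger is not only cancellation among the pieces adjacent to a chosen vertex $v$, but coincidence of $t_v=R(v)$ with a vertex or tangency value of a \emph{distant} region $P_j$; spacing your chosen vertices apart within one region does nothing to prevent this. The paper handles it by proving explicit trigonometric inequalities (of the form $0<f(j,n)-j-2<1$) showing that the corner times $k(i,n)$ for $3\le i\le n/5$ avoid all the times $m(j,n)$ except possibly two, and then classifying every boundary crossing by the \emph{sign} of its effect on the rate of area accumulation: two same-sign crossings cannot cancel, and only a bounded number of opposite-sign crossings remain, which is what produces the $-11$. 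Your local-expansion plan never engages with these cross-region coincidences, so the non-cancellation step is not actually carried out. (Incidentally, the $n/5$ in the statement comes from the range of $i$ on which the distinctness inequalities can be verified, not from selecting every fifth vertex.)
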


To the best of our knowledge this theorem gives the first infinite family of translation surfaces for which there is a non-trivial upper (and lower) bound on the number of non-differentiability points. Theorem \ref{linear_upper_bound} answers a question by \cite{KumanduriWang}. 
Furthermore, in Section \ref{sect:bounds} we provide experimental evidence towards the precise asymptotics. 

\subsection{Some History on Slope Gap Distributions}

There is a rich body of work on the statistics of gaps between saddle connections, combining aspects of the geometry of translation surfaces, homogeneous dynamics, and number theory. 

Inspired by work of Eskin-Masur and Marklof-Str\"ombergsson \cite{EskinMasur,MarkStrom}, Athreya-Chaika prove that a translation surface is not a lattice surface if and only if the smallest gap in directions between saddle connections of bounded length decays faster than quadratically in length \cite{AthCha}. Their results show that the limiting gap distribution exists and is the same for almost every surface, and that this distribution has a quadratic tail and has support at zero, in contrast to lattice surfaces which never have support at zero. 

In  \cite{AC13}, Athreya-Cheung use dynamical methods to rederive the Hall distribution for the limiting behavior of gaps between elements in the Farey sequence of level $n$ (realized as slopes of saddle connections on the marked torus) and use their construction to obtain new fine statistical results in this setting. 
In \cite{ACL}, Athreya-Chaika-Leli\`evre compute the slope gap distribution for saddle connections on the golden L, the first example of an explicit computation of a slope gap distribution for a surface which is not a branched cover of a torus. 
In \cite{Ath13}, Athreya codifies the methods applied in the previous works into a series of meta-theorems that apply to gap sequences  satisfying a set of quite general conditions, which in particular apply to studying the slopes of saddle connections on both lattice and non-lattice translation surfaces.

In \cite{UW}, Uyanik-Work expand the list of surfaces for which the slope gap distribution has been explicitly calculated by finding the distribution for the regular octagon, which directly inspired the present work.  They also provide an algorithm for finding the slope gap distribution for any lattice surface, and use this to show that the distribution of any such surface is piecewise real analytic. Their algorithm is improved upon by Kumanduri-Sanchez-Wang in \cite{KumanduriWang} to further show that the distribution of a lattice surface always has a finite number of points of non-differentiability. They also prove that the slope gap distribution of any lattice surface has quadratic decay in the tail.

In \cite{Heersink}, Heersink uses a method developed by Fisher and Schmidt to lift the Poincar\'e section for $\slr/\slz$ found by Athreya-Cheung for any finite index subgroup $H$ of $\slz$, and uses that to compute gap distributions for various subsets of Farey fractions. In \cite{taha, taha2}, Taha finds a Poincar\'e section for the geodesic and horocycle flows on quotients $\slr$ by Hecke triangle groups to study the statistics of generalized Farey sequences.

In \cite{sanchez},  Sanchez provides the first explicit computation of a slope gap distribution for a non-lattice surface by finding the distribution for the class of surfaces known as double-slit tori. In a step toward computing the slope gap distribution for a generic surface (as described by \cite{AthCha}), Work finds a Poincar\'e section for the horocycle flow on $\mathcal{H}_1(2)$, the stratum of unit area 
translation surfaces with a single cone point of angle $6\pi$, and provides bounds on the return time function to this section \cite{Work}.

\subsection{Acknowledgements}

This work started as a project during 2020 SUMRY and the authors are grateful for the support from Yale University. The second named author is grateful for support from the NSF Postdoctoral Fellowship DMS-1903099. The authors thank Jayadev Athreya, Anthony Sanchez, Sunrose Shrestha, and Jane Wang for helpful conversations. The authors are grateful to the anonymous referee for a careful reading and useful feedback.

\section{Preliminaries}

\subsection{Translation Surfaces and the \texorpdfstring{$GL_2(\mathbb{R})$}{GL2R} action}\label{translationsurfaces}

A translation surface is a pair $(X,\omega)$ where $X$ is a Riemann surface, and $\omega$ is a holomorphic $1$-form on $X$. Outside of a finite subset $\Sigma$ of $X$, the $1$-form $\omega$ induces an atlas of charts $$\varphi_i:U_i\subset X\setminus \Sigma\to \mathbb{R}^2$$ where transition maps are Euclidean translations. The finite set of points in $\Sigma$ are called the \emph{singularities} and correspond to zeroes of the holomorphic one form \cite{Masur}. At a zero of order $k-1$ (which means that in local coordinates the chart looks like $z^k$ and hence $\omega=d(z^k)=z^{k-1}dz$) the total angle is $2\pi k$. 

Equivalently, a translation surface is given by a finite collection of polygons $\{P_1,\ldots, P_n\}$ in the plane together with a preferred vertical direction and such that for each edge there exists a parallel edge of the same length and opposite orientation (with respect to the interior of the polygons), and these pairs are glued together by a translation. Since translations are holomorphic functions and they preserve the standard $1$-form $dz$ on the plane, one gets a holomorphic  $1$-form on the glued together surface \cite{Masur}.

Two translation surfaces are \emph{equivalent} if there is an orientation preserving isometry that preserves the preferred vertical direction. Equivalently, there is a \emph{cut-translate-paste} transformation from one to another.

A translation surface $(X,\omega)$ comes equipped with topological data: the genus $g$, the number of cone points (number of zeroes of $\omega$), and the excess angle at each cone point (order of zeroes). The Riemann--Roch theorem asserts that the sum of the order of zeroes is equal to $2g-2$. Hence, we can record this topological data by a vector $\vec{\alpha}=(\alpha_1,\ldots, \alpha_k)$ where $\alpha_i$ is the order of $i^{th}$ zero. The set of equivalence classes of translation surfaces with fixed topological data $\vec{\alpha}$ is called a \emph{stratum}, and denoted by $\mathcal{H}(\vec{\alpha})$. 

A \emph{saddle connection} $\gamma$ on $(X,\omega)$ is a straight line trajectory that connects a cone point to another (not necessarily distinct) cone point without any other cone point in the interior. 

\bigskip
\begin{figure}[h!]
\centering
\begin{tikzpicture}[scale=1.5, rotate=0]

\foreach \pos in {0,5} {\draw[red] ({cos(36*\pos)}, {sin(36*\pos)})--({cos(36+36*\pos)},{sin (36+36*\pos)});}
\foreach \pos in {1,6} {\draw[green] ({cos(36*\pos)}, {sin(36*\pos)})--({cos(36+36*\pos)},{sin (36+36*\pos)});}
\foreach \pos in {2,7} {\draw[orange] ({cos(36*\pos)}, {sin(36*\pos)})--({cos(36+36*\pos)},{sin (36+36*\pos)});}
\foreach \pos in {3,8} {\draw[blue] ({cos(36*\pos)}, {sin(36*\pos)})--({cos(36+36*\pos)},{sin (36+36*\pos)});}
\foreach \pos in {4,9} {\draw[purple] ({cos(36*\pos)}, {sin(36*\pos)})--({cos(36+36*\pos)},{sin (36+36*\pos)});}

\draw[black] ({cos(36*6)}, {sin(36*6)})-- (0.9045, 0.2938);

\draw[black] ({cos(36)}, {sin(36)})-- (-0.9045, -0.2938);

\node at ({cos(36*1)},{sin(36*1)}) [circle,color=black,fill,inner sep=0.6pt]{}; 
\node at ({cos(36*7)},{sin(36*7)}) [circle,color=black,fill,inner sep=0.6pt]{}; 
\node at ({cos(36*3)},{sin(36*3)}) [circle,color=black,fill,inner sep=0.6pt]{}; 
\node at ({cos(36*9)},{sin(36*9)}) [circle,color=black,fill,inner sep=0.6pt]{}; 
\node at ({cos(36*5)},{sin(36*5)}) [circle,color=black,fill,inner sep=0.6pt]{}; 
\node at ({cos(36*2)},{sin(36*2)}) [circle,color=red,fill,inner sep=0.6pt]{}; 
\node at ({cos(36*4)},{sin(36*4)}) [circle,color=red,fill,inner sep=0.6pt]{}; 
\node at ({cos(36*6)},{sin(36*6)}) [circle,color=red,fill,inner sep=0.6pt]{}; 
\node at ({cos(36*8)},{sin(36*8)}) [circle,color=red,fill,inner sep=0.6pt]{}; 
\node at ({cos(36*10)},{sin(36*10)}) [circle,color=red,fill,inner sep=0.6pt]{}; 
\end{tikzpicture}

\caption{A saddle connection $\gamma$ on $X_{10}$}
\end{figure}
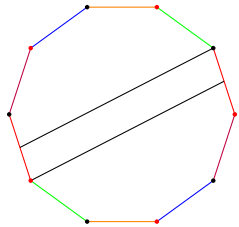

Integrating the $1$-form along an (oriented) saddle connection $\gamma$ determines a \emph{holonomy vector} 
\[
v_{\gamma}=\int_{\gamma}\omega\in\mathbb{C}
\]
which records how far and in what direction $\gamma$ travels in $\mathbb{C}$. 
This paper is concerned with holonomy vectors corresponding to saddle connections on $(X,\omega)$, but for brevity we will call them also saddle connections following the convention in \cite{ACL, UW} and denote the set of saddle connections on $(X,\omega)$ by $\Lambda_{sc}(X,\omega)$. For any translation surface $(X,\omega)$ the set $\Lambda_{sc}(X,\omega)$ is a discrete subset of $\mathbb{C}$, see \cite{HS, Zorich}.

There is a natural action of $\glr$ on the space of translation surfaces coming from the linear action of $\glr$ on $\mathbb{R}^2\cong\mathbb{C}$: for a matrix $A\in\glr$ and a translation surface $(X,\omega)$ given by a collection $\{P_1,\ldots, P_n\}$ of polygons, $A\cdot (X,\omega)$ is the translation surface given by $\{A\cdot P_1,\ldots, A\cdot P_n\}$. For $A\in\slr$, the action preserves the Euclidean area, hence induces an action on $\mathcal{H}_1(\vec{\alpha})$ of unit area translation surfaces of given topological type. 

The stabilizer of $(X,\omega)$ under the $\slr$ action is called the \emph{Veech group} of $(X,\omega)$ and denoted by ${\rm SL}(X,\omega)$. If ${\rm SL}(X,\omega)$ is a lattice, meaning that it has finite covolume in $\slr$ with respect to the Haar measure, then $(X,\omega)$ is called a \emph{lattice surface} or a \emph{Veech surface} \cite{HS}.  

\subsection{Flat geometry of \texorpdfstring{$2n$}{2n}-gons and Staircases} \label{flatgeometryof2ngon}

Let $O_{2n}$ be the regular $2n$-gon. In this section we prove that the matrix 
\[ 
M=\begin{pmatrix} 1 & -\cot(\pi/(2n)) \\ 0 & \sec\left(\frac{(n-2)\pi}{2n}\right)\end{pmatrix}=\begin{pmatrix} 1 & 0 \\ 0 & \sec\left(\frac{(n-2)\pi}{2n}\right)\end{pmatrix}\begin{pmatrix} 1 & -\cot(\pi/(2n)) \\ 0 & 1\end{pmatrix}
\]
takes $O_{2n}$ to a staircase shape $\mathcal{S}_{2n}$ that is easier to study. In other words, instead of studying the slope gap distribution on $O_{2n}$, we will study the slope gap distribution on $\mathcal{S}_{2n}$ and relate it to that of $O_{2n}$. In what follows we will blur the distinction between the polygon $\mathcal{S}_{2n}$ and the glued up surface, and denote both of them by $\mathcal{S}_{2n}$. Further, for the ease of notation we will denote $\mathcal{S}_{2n}$ by $\mathcal{S}$.

For $0 \leq i \leq \lceil n/2 \rceil - 1$ and $0\leq j\leq\lfloor n/2\rfloor - 1$, we define the finite sequences $h_i$ and $v_j$ as follows:\begin{eqnarray*}
    h_i & = & 1+\sum_{k=1}^i 2\cos(k\pi/n)=\csc\left( \frac{\pi}{2n}\right)\sin\left( \frac{\pi(1 + 2i)}{2n}\right) \\
    v_j & = & \cos\left(\frac{(n-2-2j)\pi}{2n}\right)\sec\left(\frac{(n-2)\pi}{2n}\right) =\csc\left( \frac{\pi}{n}\right)\sin\left( \frac{\pi(1 + j)}{n}\right)
\end{eqnarray*}

For $1 \leq i \leq \lceil n/2 \rceil - 1$, let $H_i$ be the rectangle with height $v_{i-1}$ and length $h_i$. Moreover, for $0\leq j\leq\lfloor n/2\rfloor - 1$, let $V_j$ be the rectangle with height $v_j$ and length $h_j$. Create the surface $\mathcal{S}$ by gluing the left edge of $H_1$ to the right edge of the unit square (which is $V_0$), the top edge of $H_i$ with the bottom edge of $V_i$, and the right edge of $V_i$ with the left edge of $H_{i+1}$ for all relevant $i$. Finally, identify opposite edges of the resulting staircase shape. This surface $\mathcal{S}$ is a staircase shape whose edge lengths are in the sets of all $h_i$ and $v_j$.

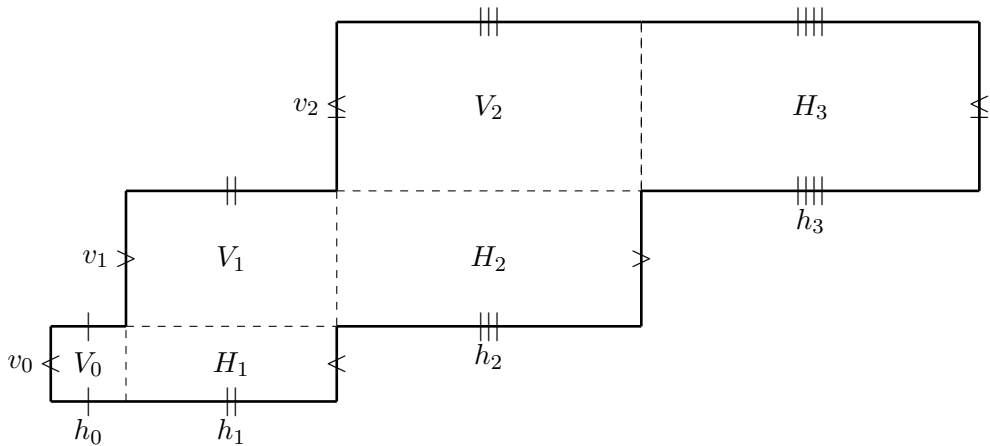
\begin{figure}[h!]
\centering
\begin{tikzpicture}[scale=1]
\coordinate (a) at (0,0);
\coordinate (b) at (0,1);
\coordinate (c) at (1,0);
\coordinate (d) at (1,1);
\coordinate (e) at (3.801937735804838,0);
\coordinate (f) at (3.801937735804838,1);
\coordinate (g) at (1,2.801937735804838);
\coordinate (h) at (3.801937735804838,2.801937735804838);
\coordinate (i) at (7.850855075327144,2.801937735804838);
\coordinate (j) at (7.850855075327144,1);
\coordinate (k) at (7.850855075327144,5.048917339522305);
\coordinate (l) at (3.801937735804838,5.048917339522305);
\coordinate (m) at (12.344814282762078,5.048917339522305);
\coordinate (n) at (12.344814282762078,2.801937735804838);

\draw [line width=1pt] (a) -- (b) node[midway] {$<$} node at (-0.4, 0.5) {$v_0$};
\draw [line width=1pt] (b)-- (d) node[midway] {$|$};
\draw [line width=1pt] (a)-- (c) node[midway] {$|$} node at (0.5, -0.4) {$h_0$};
\draw [line width=1pt] (c)-- (e) node[midway] {$||$} node at (2.4, -0.4) {$h_1$};
\draw [line width=1pt] (e)-- (f) node[midway] {$<$};
\draw [line width=1pt] (f)-- (j) node[midway] {$|||$} node at (5.825, 0.6) {$h_2$};
\draw [line width=1pt] (j)-- (i) node[midway] {$>$};
\draw [line width=1pt] (i)-- (n) node[midway] {$||||$} node at (10.1, 2.4) {$h_3$};
\draw [line width=1pt] (n)-- (m) node[midway] {$\leq$};
\draw [line width=1pt] (m)-- (k) node[midway] {$||||$};
\draw [line width=1pt] (k)-- (l) node[midway] {$|||$};
\draw [line width=1pt] (l)-- (h) node[midway] {$\leq$} node at (3.4, 3.92) {$v_2$};
\draw [line width=1pt] (h)-- (g) node[midway] {$||$};
\draw [line width=1pt] (g)-- (d) node[midway] {$>$} node at (0.6, 1.9) {$v_1$};

\draw [dashed] (c) -- (d);
\draw[dashed] (d) -- (f);
\draw [dashed] (h) -- (f);
\draw[dashed] (h) -- (i);
\draw [dashed] (k)-- (i);
\draw [dashed] (i)-- (k);

\node [draw=none] at (0.5, 0.5) {$V_0$};
\node [draw=none] at (2.4, 1.9) {$V_1$};
\node [draw=none] at (5.825, 3.92) {$V_2$};
\node [draw=none] at (2.4, 0.5) {$H_1$};
\node [draw=none] at (5.825, 1.9) {$H_2$};
\node [draw=none] at (10.1, 3.92) {$H_3$};
\end{tikzpicture}
    \caption{An example of $\mathcal{S}$ for $n = 7$.}
    \label{fig:first_example_of_staircase}
\end{figure}



\begin{prop}\label{transformation_to_staircase}
Left multiplication by the matrix $M=\begin{pmatrix} 1 & -\cot(\pi/(2n)) \\ 0 & \sec\left(\frac{(n-2)\pi}{2n}\right)\end{pmatrix}$ takes $O_{2n}$ to $\mathcal{S}$.  
\end{prop}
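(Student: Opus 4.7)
The plan is to prove the proposition by computing $M\cdot O_{2n}$ explicitly in coordinates and then exhibiting a cut-translate-paste transformation that rearranges the image into the staircase $\mathcal{S}$. First, I would place $O_{2n}$ with circumradius $R = 1/(2\sin(\pi/(2n)))$ (so the sides have unit length) and vertices $v_k = R(\cos(k\pi/n), \sin(k\pi/n))$ for $k = 0, 1, \ldots, 2n-1$. Applying $M$ to each vertex and simplifying with the angle-subtraction identity together with the observation $\cos((n-2)\pi/(2n)) = \sin(\pi/n)$, I would establish the clean formula
\[
M\cdot v_k \;=\; R\bigl(-h_{k-1},\; v_{k-1}\bigr) \qquad \text{for } k = 1, 2, \ldots, n-1,
\]
along with $M\cdot v_0 = (R, 0)$, $M\cdot v_n = (-R, 0)$, and $M\cdot v_{n+k} = -M\cdot v_k$ by the central symmetry of $O_{2n}$; here the sequences $h_i$ and $v_j$ are extended to the relevant range via the trigonometric symmetries $h_i = h_{n-1-i}$ and $v_j = v_{n-2-j}$ (both coming from $\sin(\pi - x) = \sin x$).

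From this coordinate description, the boundary of $M\cdot O_{2n}$ zigzags through points whose coordinates are exactly $\pm R$ times the staircase parameters. The symmetries above imply that the upper boundary visits each height $Rv_{k-1}$ (for $1 \leq k \leq \lfloor (n-1)/2 \rfloor$) at two vertices, namely $M\cdot v_k$ and $M\cdot v_{n-k}$; the horizontal segment joining each such pair lies in the interior of $M\cdot O_{2n}$ and constitutes an interior saddle connection, as does the central horizontal segment at $y = 0$ joining $M\cdot v_0$ and $M\cdot v_n$. Cutting $M\cdot O_{2n}$ along all these horizontal saddle connections decomposes it into several parallelogram-shaped pieces, each bounded above and below by horizontal segments and on the sides by slanted edges coming from pairs of opposite edges of $O_{2n}$.

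Next, each slanted edge of a piece is identified (as the image under $M$ of an edge of $O_{2n}$) with a slanted edge of another piece on the opposite side of the origin. Using these identifications to translate pieces, one glues the slanted edges together and rearranges the pieces into a staircase. I would verify by matching cumulative lengths that the resulting staircase has rectangles of dimensions $h_j \times v_j$ (corresponding to $V_j$) and $h_i \times v_{i-1}$ (corresponding to $H_i$), with the identifications of opposite edges prescribed by $\mathcal{S}$. The small case $n = 3$, where $\mathcal{S}$ degenerates to a $3 \times 1$ torus, provides a concrete illustration: the hexagon $M\cdot O_6$ is cut along $y = 0$, one piece is translated by the identification of the two non-horizontal edge pairs, and the resulting shape unfolds into a $3 \times 1$ rectangle.

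The main obstacle is the combinatorial bookkeeping of the cut-translate-paste for general $n$, which is sensitive to the parity of $n$: for odd $n$, $M\cdot O_{2n}$ has two horizontal boundary edges arising from the horizontal edges of $O_{2n}$ (those incident to $v_{(n-1)/2}$ and $v_{(n+1)/2}$ and their opposites), whereas for even $n$ it has two vertical boundary edges arising from the edges of $O_{2n}$ in direction $\pi/(2n)$ (those incident to $v_{n/2}$ and its opposite). In either case, after verifying the small cases $n = 3$ and $n = 4$ directly, the pattern extends to general $n$ by an induction on the step number of the staircase, with each additional level corresponding to one further pair of vertices at a new height $Rv_j$.
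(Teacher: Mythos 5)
Your proposal is correct and is essentially the paper's own argument: both rest on the cylinder structure of $O_{2n}$ transverse to the direction $\pi/(2n)$ (which $M$ sends to the vertical direction), a cut-translate-paste reassembly, and the same trigonometric identities for the side lengths $h_i$ and $v_j$ — you merely apply $M$ first and then cut, where the paper cuts $O_{2n}$ into a slanted staircase first and then shears and rescales. One small slip worth noting: the horizontal strips you cut out are trapezoids rather than parallelograms, and the two slanted sides of a single strip come from mirror-image (not mutually opposite) edges of $O_{2n}$; however, your next sentence, which glues each slanted edge to the corresponding edge of the centrally reflected piece, describes the correct identification, so the argument is unaffected.
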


\begin{proof}

The surface $O_{2n}$ has a cylinder decomposition in $\theta=\pi/2n$ direction. Hence by a cut and paste transformation while keeping track of the side identifications, we can obtain a \emph{slanted} staircase representation of $O_{2n}$ as in Figure \ref{fig:slantedstaircase}. 

\begin{figure}[h!]
\centering
\begin{tikzpicture}[scale=1.9318, rotate=-15]

\draw[black,thick] ({cos(30*1)}, {sin(30*1)})-- ({cos(30*2)}, {sin(30*2)});

\draw[black,thick] ({cos(30*2)}, {sin(30*2)})-- ({cos(30*3)}, {sin(30*3)});

\draw[black,thick] ({cos(30*3)}, {sin(30*3)})-- ({cos(30*4)}, {sin(30*4)});

\draw[black,thick] ({cos(30*4)}, {sin(30*4)})-- ({cos(30*5)}, {sin(30*5)});

\draw[black,thick] ({cos(30*5)}, {sin(30*5)})-- ({cos(30*6)}, {sin(30*6)});

\draw[gray,thick] ({cos(30*7)}, {sin(30*7)})-- ({cos(30*8)}, {sin(30*8)});
\draw[gray,thick] ({cos(30*8)}, {sin(30*8)})-- ({cos(30*9)}, {sin(30*9)});
\draw[black,very thick] ({cos(30*9)}, {sin(30*9)})-- ({cos(30*10)}, {sin(30*10)});
\draw[gray,thick] ({cos(30*10)}, {sin(30*10)})-- ({cos(30*11)}, {sin(30*11)});
\draw[gray,thick] ({cos(30*11)}, {sin(30*11)})-- ({cos(30*12)}, {sin(30*12)});

\draw[black,dotted,thick] ({cos(30*1)}, {sin(30*1)})-- ({cos(30*12)}, {sin(30*12)});

\draw[black,dotted,thick] ({cos(30*6)}, {sin(30*6)})-- ({cos(30*7)}, {sin(30*7)});

\draw[black, very thick] ({cos(30*11)}, {sin(30*11)})-- ({cos(30*8)}, {sin(30*8)});

\draw[black, very thick] ({cos(30*12)}, {sin(30*12)})-- ({cos(30*7)}, {sin(30*7)});

\draw[gray] ({cos(30*1)}, {sin(30*1)})-- ({cos(30*6)}, {sin(30*6)});

\draw[gray] ({cos(30*2)}, {sin(30*2)})-- ({cos(30*5)}, {sin(30*5)});


\draw[black, very thick] ({cos(30*9)}, {sin(30*9)})-- ({cos(30*11)}, {sin(30*11)});
\draw[black, very thick] ({cos(30*8)}, {sin(30*8)})-- ({cos(30*12)}, {sin(30*12)});
\draw[black, very thick] ({cos(30*7)}, {sin(30*7)})-- ({cos(30*1)}, {sin(30*1)});
\draw[purple, dotted, very thick] ({cos(30*6)}, {sin(30*6)})-- ({cos(30*2)}, {sin(30*2)});
\draw[purple, dotted, very thick] ({cos(30*5)}, {sin(30*5)})-- ({cos(30*3)}, {sin(30*3)});
\draw[black, very thick] ({cos(30*10)}, {sin(30*10)})-- (1.36603, -0.366025);
\draw[black, very thick] ({cos(30*11)}, {sin(30*11)})-- ({cos(30*11)+cos(30*10)-cos(30*9)}, {sin(30*11)+sin(30*10)-sin(30*9)});

\draw[black, very thick] ({cos(30*11)}, {sin(30*11)})-- ({cos(30*11)+cos(30*2)-cos(30*6)}, {sin(30*11)+sin(30*2)-sin(30*6)});

\draw[black, very thick] ({cos(30*12)}, {sin(30*12)})-- ({cos(30*12)+cos(30*2)-cos(30*5)}, {sin(30*12)+sin(30*2)-sin(30*5)});

\draw[black, very thick] ({cos(30*8)}, {sin(30*8)})-- ({cos(30*8)+cos(30*5)-cos(30*3)}, {sin(30*8)+sin(30*5)-sin(30*3)});

\draw[black, very thick] ({cos(30*9)}, {sin(30*9)})-- ({cos(30*9)+cos(30*5)-cos(30*2)}, {sin(30*9)+sin(30*5)-sin(30*2)});

\draw[black, very thick] ({cos(30*12)}, {sin(30*12)})-- ({cos(30*12)+cos(30*1)-cos(30*7)}, {sin(30*12)+sin(30*1)-sin(30*7)});

\draw[black, very thick] ({cos(30*1)}, {sin(30*1)})-- ({cos(30*1)+cos(30*1)-cos(30*6)}, {sin(30*1)+sin(30*1)-sin(30*6)});

\draw[black, very thick] ({cos(30*7)}, {sin(30*7)})-- ({cos(30*7)+cos(30*6)-cos(30*2)}, {sin(30*7)+sin(30*6)-sin(30*2)});

\draw[black, very thick] ({cos(30*8)}, {sin(30*8)})-- ({cos(30*8)+cos(30*6)-cos(30*1)}, {sin(30*8)+sin(30*6)-sin(30*1)});

\fill[fill=green, opacity=0.1] ({cos(30*1)}, {sin(30*1)})--({cos(30*7)+cos(30*6)-cos(30*2)}, {sin(30*7)+sin(30*6)-sin(30*2)})--({cos(30*8)}, {sin(30*8)})--({cos(30*8)+cos(30*5)-cos(30*3)}, {sin(30*8)+sin(30*5)-sin(30*3)})--({cos(30*10)}, {sin(30*10)})--(1.36603, -0.366025)--({cos(30*11)}, {sin(30*11)})--({cos(30*11)+cos(30*2)-cos(30*6)}, {sin(30*11)+sin(30*2)-sin(30*6)})--({cos(30*12)}, {sin(30*12)})--({cos(30*12)+cos(30*1)-cos(30*7)}, {sin(30*12)+sin(30*1)-sin(30*7)})--cycle;

\end{tikzpicture}

\caption{Cylinder decomposition of $O_{2n}$ in the $\theta=\pi/2n$ direction}
\label{fig:slantedstaircase}
\end{figure}
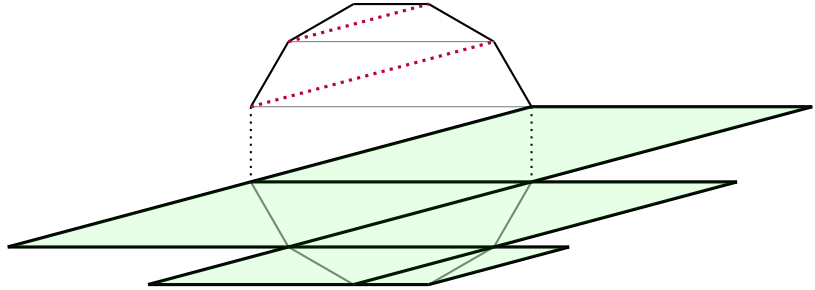

The matrix 
$$\begin{pmatrix} 1 & -\cot(\pi/(2n)) \\ 0 & 1\end{pmatrix}$$ is a shear that preserves the horizontal lines and distances and sends the lines with slope $\pi/2n$ to vertical lines. Hence we get a \emph{staircase to the left}, see Figure \ref{staircases}. 

On the other hand, the matrix
$$\begin{pmatrix} 1 & 0 \\ 0 & \sec\left(\frac{(n-2)\pi}{2n}\right)\end{pmatrix}$$
only scales the vertical direction while keeping the $x$ component constant, and it is only necessary to get the shortest vertical side to be of length $1$. We can then apply a cut-and-paste transformation to orient our staircase from left to right. The claim about the edge lengths of the staircase follows from a straightforward computation.

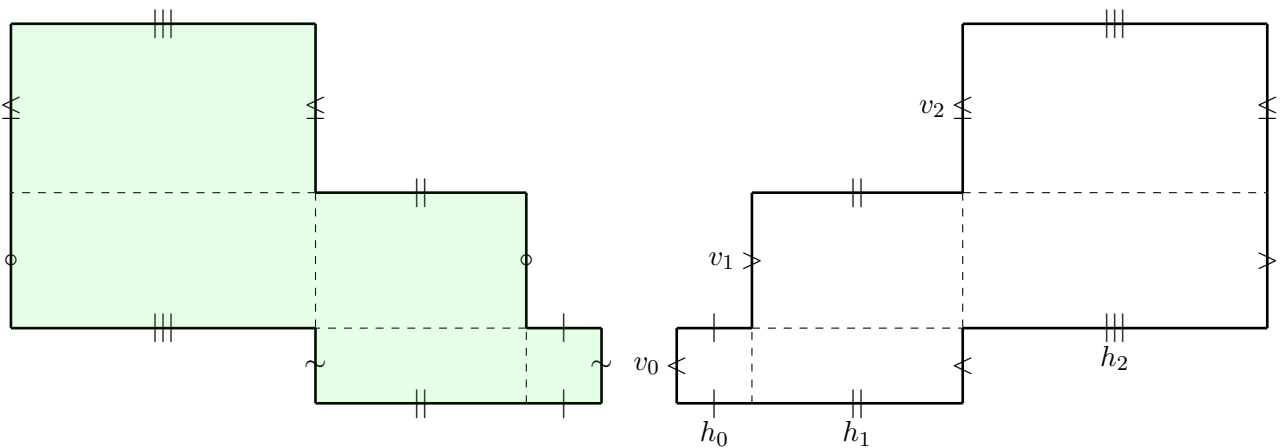
\begin{figure}[h!]

\begin{tikzpicture}[scale=1]
\begin{scope}
\coordinate (a) at (0,0);
\coordinate (b) at (0,1);
\coordinate (c) at (-1,0);
\coordinate (d) at (-1,1);
\coordinate (e) at (-3.801937735804838,0);
\coordinate (f) at (-3.801937735804838,1);
\coordinate (g) at (-1,2.801937735804838);
\coordinate (h) at (-3.801937735804838,2.801937735804838);
\coordinate (i) at (-7.850855075327144,2.801937735804838);
\coordinate (j) at (-7.850855075327144,1);
\coordinate (k) at (-7.850855075327144,5.048917339522305);
\coordinate (l) at (-3.801937735804838,5.048917339522305);

\draw [line width=1pt] (a) -- (b) node[midway] {$\sim$} ;
\draw [line width=1pt] (b)-- (d) node[midway] {$|$};
\draw [line width=1pt] (a)-- (c) node[midway] {$|$};
\draw [line width=1pt] (c)-- (e) node[midway] {$||$};
\draw [line width=1pt] (e)-- (f) node[midway] {$\sim$};
\draw [line width=1pt] (f)-- (j) node[midway] {$|||$};
\draw [line width=1pt] (j)-- (i) node[midway] {$\circ$};
\draw [line width=1pt] (k)-- (l) node[midway] {$|||$};
\draw [line width=1pt] (l)-- (h) node[midway] {$\leq$};
\draw [line width=1pt] (h)-- (g) node[midway] {$||$};
\draw [line width=1pt] (g)-- (d) node[midway] {$\circ$};

\draw [dashed] (c) -- (d);
\draw[dashed] (d) -- (f);
\draw [dashed] (h) -- (f);
\draw[dashed] (h) -- (i);
\draw [line width=1pt] (i)-- (k) node[midway] {$\leq$};

\fill[fill=green, opacity=0.1] (a)--(b)--(d)--(g)--(h)--(l)--(k)--(j)--(f)--(e)--cycle;
\end{scope}

\begin{scope}[shift={(1,0)}]

\coordinate (a) at (0,0);
\coordinate (b) at (0,1);
\coordinate (c) at (1,0);
\coordinate (d) at (1,1);
\coordinate (e) at (3.801937735804838,0);
\coordinate (f) at (3.801937735804838,1);
\coordinate (g) at (1,2.801937735804838);
\coordinate (h) at (3.801937735804838,2.801937735804838);
\coordinate (i) at (7.850855075327144,2.801937735804838);
\coordinate (j) at (7.850855075327144,1);
\coordinate (k) at (7.850855075327144,5.048917339522305);
\coordinate (l) at (3.801937735804838,5.048917339522305);
\coordinate (m) at (12.344814282762078,5.048917339522305);
\coordinate (n) at (12.344814282762078,2.801937735804838);

\draw [line width=1pt] (a) -- (b) node[midway] {$<$} node at (-0.4, 0.5) {$v_0$};
\draw [line width=1pt] (b)-- (d) node[midway] {$|$};
\draw [line width=1pt] (a)-- (c) node[midway] {$|$} node at (0.5, -0.4) {$h_0$};
\draw [line width=1pt] (c)-- (e) node[midway] {$||$} node at (2.4, -0.4) {$h_1$};
\draw [line width=1pt] (e)-- (f) node[midway] {$<$};
\draw [line width=1pt] (f)-- (j) node[midway] {$|||$} node at (5.825, 0.6) {$h_2$};
\draw [line width=1pt] (j)-- (i) node[midway] {$>$};
\draw [line width=1pt] (k)-- (l) node[midway] {$|||$};
\draw [line width=1pt] (l)-- (h) node[midway] {$\leq$} node at (3.4, 3.92) {$v_2$};
\draw [line width=1pt] (h)-- (g) node[midway] {$||$};
\draw [line width=1pt] (g)-- (d) node[midway] {$>$} node at (0.6, 1.9) {$v_1$};

\draw [dashed] (c) -- (d);
\draw[dashed] (d) -- (f);
\draw [dashed] (h) -- (f);
\draw[dashed] (h) -- (i);
\draw [line width=1pt] (i)-- (k) node[midway] {$\leq$};

\end{scope}

\end{tikzpicture}
\caption{A staircase representative of $O_{12}$}
\label{staircases}
\end{figure}

\end{proof}

We finish this section by recording some geometric properties of $\mathcal{S}$ that will be useful in the future. These result from the formulae for $h_i$ and $v_i$ and trigonometric identities.

\begin{rem}\label{vertical_squares}
For $1 \leq i \leq \floor{n/2}-1$, 
\[v_{i-1} + v_i = h_i.\]
Equivalently, for all relevant $i$, the rectangle formed by $V_i \cup H_i$ is a square.
\end{rem}

\begin{rem}\label{constant_aspect_ratio}
For $0 \leq i \leq \floor{(n-1)/2}-1$,
\[2 + 2\cos(\pi/n) = \frac{h_i + h_{i+1}}{v_i}.\]  Equivalently, the rectangles formed by $V_i \cup H_{i+1}$ all have the same aspect ratio.
\end{rem}

\begin{rem}\label{counting_backwards} 
For all $i$ and $n$, we have the identities
\[h_{n-i} = h_{i-1}\] 
and
\[v_{n-i} = v_{i-2}.\]
Although the interpretation of $h_i$ and $v_i$ as side lengths on our staircase shape does not make sense for $i> \lfloor n/2\rfloor$ or $i<0$, it will still be convenient in stating some of our results to use the formulas for $h_i$ and $v_i$ outside of this region.
\end{rem}

\section{From Slope Gaps to Dynamics}

\subsection{Dynamical Reframing}\label{dynamicalreframing}

Let $\mathcal{S}$ be the staircase representative of a regular $2n$-gon. Recall from the introduction that  
\[
\mathbb{S}(k) = \{0=s_0 < s_1 < s_2 < \cdots\}
\] 
is the ordered set of slopes of holonomy vectors on $\mathcal{S}$ with nonnegative imaginary component and positive real component $\leq k$.  Recall that the gaps between consecutive elements of $\mathbb{S}(k)$ repeat with a period of $N(k)$, and that $N(k)$ grows like $k^2$.

Let \[\mathcal{G}(k) = \{k^2(s_i - s_{i-1}) \ | \ 1 \leq i \leq N(k), \ s_i \in \mathbb{S}(k)\}\] denote the associated set of renormalized slope gaps on $\mathcal{S}$. Our goal is to compute the limit
\[\lim_{k \to \infty} \frac{|\mathcal{G}(k) \cap (a,b)|}{N(k)}\] 
for any interval $(a,b)\subseteq \mathbb{R}$. 
To compute this limit, we reframe the question about gaps between slopes in terms of return times for the horocycle flow to an appropriate Poincar\'e section. 

In this article, the horocycle flow is given by the left action of the following one parameter subgroup: 
\[\left\{h_s = \begin{pmatrix} 1 & 0 \\ -s & 1\end{pmatrix} \ \vline \ s \in \mathbb{R}\right\}.\] 
Observe that given any vector $v$,
\[\mathrm{slope}(h_sv) = \mathrm{slope}(v) - s,\] 
so this definition of the horocycle flow acts on slopes by translation. Most importantly, the horocycle flow preserves \textit{gaps} in slopes.

Let $\Gamma$ denote the Veech group of $\mathcal{S}$.  We will study the action of $h_s$ on the space $\slr/\Gamma$, which we can identify with $\slr\cdot \mathcal{S}$. If we denote the set of holonomy vectors on $\mathcal{S}$ by $\Lambda_{sc}(\mathcal{S})$, then for any $g\in\glr$ we have 
\[
g\cdot\Lambda_{sc}(\mathcal{S})=\Lambda_{sc}(g\cdot\mathcal{S}).
\]
Consider the set
\begin{align*}
    \Omega=\{g\Gamma\mid g\cdot\Lambda_{sc}(\mathcal{S}) \cap (0,1]\neq\emptyset\}
\end{align*}
in
$\slr/\Gamma$, where the interval $(0,1]$ is considered as a subset of the real axis inside of $\mathbb{C}$. In \cite{Ath13}, Athreya proved that $\Omega$ is a Poincar\'e section to the horocycle flow $h_s$ on $\slr/\Gamma$, meaning that for almost every $g\Gamma\in\slr/\Gamma$, the horocycle orbit $\{h_sg\Gamma\}_{s\in\R}$ intersects $\Omega$ in a nonempty, countable, discrete set of times. It is then possible to study the continuous horocycle flow on $\slr/\Gamma$ by studying its discrete time return map to the lower-dimensional set $\Omega$. Building on work of \cite{ACL, UW} we parametrize this Poincar\'e section and relate the first return time function for the Poincar\'e section to the slope gap distribution of $\mathcal{S}$.

\subsection{Description of the Poincar\'e Section}

We now give an explicit description of the Poincar\'e section for the surface $\mathcal{S}$ following the algorithm described in \cite{UW}. The parabolic and elliptic generators for $\Gamma_{O_{2n}}$, the Veech group of $O_{2n}$, are respectively given by \[S= \begin{pmatrix}1 & -2\cot(\pi/(2n)) \\ 0 & 1 \end{pmatrix}, \quad R=\begin{pmatrix} \cos(\pi/n) & \sin(\pi/n) \\ -\sin(\pi/n) & \cos(\pi/n) \end{pmatrix},\] so the corresponding generators $S'$ and $R'$ for $\Gamma$, the Veech group of $\mathcal{S}$, are given by conjugating $S$ and $R$ by $M$: \begin{eqnarray*}S'= MSM^{-1}&=& \begin{pmatrix}1 & -2(1+\cos(\pi/n)) \\ 0 & 1 \end{pmatrix}, \\[10pt]R' = MRM^{-1} &=& \begin{pmatrix} 1+2\cos(\pi/n) & 2(1+\cos(\pi/n)) \\ -1 & -1 \end{pmatrix}.\end{eqnarray*}

It has been shown by Veech, see \cite{HSinvariants}, that the Veech group for $O_{2n}$ (and hence $\mathcal{S}$) has two cusps. In fact, the image of $\Gamma_{O_{2n}}$ in $\pslr$ is isomorphic to the triangle group $\Delta(n,\infty, \infty)$.

Let $P_1$ and $P_2$ be the maximal parabolic subgroups representing the conjugacy classes of all maximal parabolic subgroups corresponding to these cusps. It follows from \cite{V89} that the set $\Lambda_{sc}(\mathcal{S})$ of saddle connections is a disjoint union of $\Gamma = \text{SL}(\mathcal{S})$ orbits of saddle connections,
\[\Lambda_{sc}(\mathcal{S})=\Gamma\cdot w_1\sqcup \Gamma\cdot w_2
\]
where each $w_i$ is the shortest holonomy vector in the  eigenspace corresponding to a generator for the infinite cyclic factor of $P_i$. We will see in the proof of Proposition \ref{transversalprop} that for our setting we may use $w_1 = \begin{pmatrix} 1 \\ 0 \end{pmatrix}$ and $w_2 = \begin{pmatrix} 0 \\ 1\end{pmatrix}$.

The decomposition of $\Lambda_{sc}(\mathcal{S})$ into disjoint orbits allows us to write \[\Omega=\Omega_{w_1} \cup \Omega_{w_2},\] 
where $\Omega_{w_i} = \{g\Gamma \ | \ g\Gamma\cdot w_i \cap (0, 1] \neq \emptyset\}$. 

Let \[M_{x,y} =\begin{pmatrix}x & y \\ 0 & x^{-1} \end{pmatrix},\] \[\Omega_1 = \{(x,y) \in \R^2 \ | \ 0 < x \leq 1, 1-2(1+\cos(\pi/n))x < y \leq 1\},\] and \[\Omega_2 = \{(x,y) \in \R^2 \ | \ 0 < x \leq 1, 1-x < y \leq 1\}.\] 

\begin{prop}\label{transversalprop}
There are coordinates from the set $\Omega$ to $\Omega_1 \cup \Omega_2$. More precisely, the bijection between $\slr/\Gamma$ and $\slr \cdot \mathcal{S}$ induces a bijection from  $\Omega_{w_i}$ to $\{M_{x,y}C_i\cdot \mathcal{S}\mid (x,y)\in\Omega_i\}$ where the latter set is in bijection with $\Omega_i$ for a suitable matrix $C_i$. 
\end{prop}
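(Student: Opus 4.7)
The plan is to follow the Poincar\'e section algorithm for Veech surfaces developed in \cite{UW}. For each $i\in\{1,2\}$, I will parametrize $\Omega_{w_i}$ by giving a normal form for its elements and then identifying a fundamental domain for the residual action of $\mathrm{Stab}_\Gamma(w_i)$. The desired bijection will read off directly from this parametrization.

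First I would set $C_1=I$ and $C_2=\begin{pmatrix} 0 & 1\\ -1 & 0\end{pmatrix}$, so that in both cases $C_iw_i=\begin{pmatrix}1\\0\end{pmatrix}$. Given $g\Gamma\in\Omega_{w_i}$, by definition there exists $\gamma\in\Gamma$ with $g\gamma w_i=\begin{pmatrix}x\\0\end{pmatrix}$ for some $x\in(0,1]$. Then $g\gamma C_i^{-1}$ has first column $\begin{pmatrix}x\\0\end{pmatrix}$ and determinant one, which forces $g\gamma C_i^{-1}=M_{x,y}$ for a unique $y\in\R$. So every element of $\Omega_{w_i}$ admits a representative of the form $M_{x,y}C_i$ with $(x,y)\in(0,1]\times\R$.

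Next I would cut $\R$ down to a fundamental interval in $y$. The choice of $\gamma$ above is unique modulo right multiplication by $\mathrm{Stab}_\Gamma(w_i)$, which, since $-I\in\Gamma$, is a cyclic parabolic subgroup. For $i=1$, this stabilizer is generated by $S'$, and a direct calculation gives $M_{x,y}S'=M_{x,\,y-2(1+\cos(\pi/n))x}$. Choosing the fundamental interval $(1-2(1+\cos(\pi/n))x,\,1]$ yields exactly $\Omega_1$. For $i=2$, a direct matrix multiplication gives $R'^{-1}S'=\begin{pmatrix}-1 & 0\\1 & -1\end{pmatrix}=-T_2$, where $T_2=\begin{pmatrix}1 & 0\\-1 & 1\end{pmatrix}$, and since $-I\in\Gamma$ we conclude $T_2\in\mathrm{Stab}_\Gamma(w_2)$. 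A short computation then shows $M_{x,y}C_2\cdot T_2^{-1}=M_{x,\,y-x}\cdot C_2$, so the $y$-fundamental interval has length $x$, and the choice $(1-x,\,1]$ yields $\Omega_2$. Combining these facts, $(x,y)\mapsto M_{x,y}C_i\Gamma$ is a bijection from $\Omega_i$ onto $\Omega_{w_i}$, which under the identification $\slr/\Gamma\cong\slr\cdot\mathcal{S}$ becomes the desired bijection onto $\{M_{x,y}C_i\cdot\mathcal{S}:(x,y)\in\Omega_i\}$.

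The principal obstacle lies in the $i=2$ case: one must verify that $T_2$ is \emph{primitive} in $\mathrm{Stab}_\Gamma(w_2)$, rather than a positive power of an even shorter lower unipotent element of $\Gamma$; if this failed, the interval $(1-x,1]$ would hit multiple cosets and the proposed map would not be injective. To rule this out, I would exploit the fact that $\Gamma$ has image $\Delta(n,\infty,\infty)$ in $\pslr$, whose two parabolic conjugacy classes each have a canonical primitive generator, so that $T_2$ (being conjugate to $S'$ via the elliptic $R'$) is automatically primitive in its cusp. Alternatively, one can read off primitivity geometrically from the vertical cylinder decomposition of $\mathcal{S}$, where $T_2$ must realize the shortest simultaneous Dehn twist on all vertical cylinders.
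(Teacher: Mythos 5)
Your argument is essentially the paper's: the paper also sets $C_1=I$ and $C_2=\begin{pmatrix}0&1\\-1&0\end{pmatrix}$, conjugates the cusp generators to upper-triangular parabolics $\begin{pmatrix}1&\alpha_i\\0&1\end{pmatrix}$ with $\alpha_1=-2(1+\cos(\pi/n))$ and $\alpha_2=-1$, and reads off the fundamental intervals in $y$, deferring the general normal-form/bijection machinery to Theorem~1.2 of \cite{UW}; your computations ($M_{x,y}S'=M_{x,\,y-2(1+\cos(\pi/n))x}$ and $M_{x,y}C_2T_2^{-1}=M_{x,\,y-x}C_2$, with $T_2$ equal to the paper's $S_2^{-1}=h_1$) all check out. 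One parenthetical is wrong, however: $T_2$ is \emph{not} conjugate to $S'$ via $R'$ (nor by anything else in $\Gamma$) --- the whole setup rests on $\Gamma$ having two cusps, i.e.\ two distinct conjugacy classes of maximal parabolic subgroups, which is exactly why $\Lambda_{sc}(\mathcal{S})$ splits as $\Gamma w_1\sqcup\Gamma w_2$; if $T_2$ and $S'$ were conjugate there would be a single orbit. So that route to primitivity fails. The concern itself is legitimate and is the one point the paper handles only by assertion (``one can check that $S_2=((R')^{n-1}S')^{-1}$'' generates the infinite cyclic factor); to close it you should rely on your alternative argument --- primitivity read off from the vertical cylinder decomposition of $\mathcal{S}$, where the shortest simultaneous Dehn twist gives the primitive parabolic --- or verify directly that $S_2$ generates $\mathrm{Stab}_\Gamma(w_2)$ modulo $\pm I$.
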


\begin{proof}
   This statement follows from Theorem 1.2 in \cite{UW}, but we illustrate the explicit computations for our setting.  The Veech group $\Gamma$ has two cusps, and let $S_1$ and $S_2$ be the generators of the infinite cyclic factors corresponding to maximal parabolic subgroups. One can check that \[S_1 = S' = \begin{pmatrix} 1 & -2(1+\cos(\pi/n)) \\ 0 & 1 \end{pmatrix}\] and 
   \begin{align}
       S_2 = ((R')^{n-1}S')^{-1} = \begin{pmatrix} 1 & 0 \\ 1 & 1 \end{pmatrix}.\label{S2}
   \end{align}
       Note that $S_1$ and $S_2$ both have eigenvalue $1$, and that their eigenvectors are in the directions of $w_1$ and $w_2$, respectively. Moreover, $w_1$ and $w_2$ are the shortest holonomy vectors in these directions. Now, pick $C_1 = \begin{pmatrix} 1 & 0 \\ 0 & 1 \end{pmatrix}$, so that \[C_1S_1C_1^{-1} = \begin{pmatrix} 1 & \alpha_1 \\ 0 & 1 \end{pmatrix} = \begin{pmatrix} 1 & -2(1+\cos(\pi/n)) \\ 0 & 1 \end{pmatrix}\] and $C_1 \cdot w_1 = \begin{pmatrix} 1 \\ 0 \end{pmatrix}$. Thus, we can set \[\Omega_1 = \{(x,y) \ |\ 0 < x \leq 1, 1 - 2(1+\cos(\pi/n))x < y \leq 1\}.\]

    Now, pick $C_2 = \begin{pmatrix} 0 & 1 \\ -1 & 0 \end{pmatrix}$, so that \[C_2S_2C_2^{-1} = \begin{pmatrix} 1 & \alpha_2 \\ 0 & 1 \end{pmatrix} = \begin{pmatrix} 1 & -1 \\ 0 & 1 \end{pmatrix}\] and $C_2 \cdot w_2 = \begin{pmatrix} 1 \\ 0 \end{pmatrix}$. Thus, we can set \[\Omega_{2} = \{(x,y) \ |\ 0 < x \leq 1, 1 - x < y \leq 1\}.\]

\end{proof}

\subsection{Return time and volume computations}

Recall that the Poincar\'e section $\Omega$ is defined as 
\[\Omega=\{g\Gamma\mid g\cdot\Lambda_{sc}(\mathcal{S}) \cap (0,1]\neq\emptyset\}.
\]

Given a point $X$ in $\Omega$ the return time function $R: \Omega\to \R^+$  gives the amount of time required for $X$ to return to $\Omega$ under the horocycle flow $h_s$. Since $h_s$ acts on slopes by translation, in the language of our parametrization, for any $(x,y) \in \Omega_i$, the return time $R(x,y)$ is the slope of the saddle connection (with horizontal component at most $1$) on $M_{x,y}C_i\cdot\mathcal{S}$ with smallest positive slope among all saddle connections with horizontal component at most $1$. 

In this section, we prove that there are finitely many saddle connections of interest on $C_i\cdot\mathcal{S}$, i.e. for any given $(x,y) \in \Omega_i$, there is a finite set of \textit{fixed} saddle connections on $C_i\mathcal{S}$ that are candidates for being the saddle connection on $M_{x,y}C_i\cdot\mathcal{S}$ of minimum positive slope.

Following the terminology introduced in \cite{KumanduriWang}, we say that a (holonomy) vector on $C_i\cdot\mathcal{S}$ is a \emph{winner} or a \emph{winning saddle connection} at the point $(x,y)\in\Omega_i$ if it is the vector whose image under $M_{x,y}$ has the smallest slope among all saddle connections with positive slope and horizontal length at most $1$ on $M_{x,y}C_i\cdot\mathcal{S}$.

In Section \ref{returntimeforomega2}, we show that the only winning saddle connection for $\Omega_2$ is $\cvec{0}{1}$. On the other hand, there are $n$ saddle connections of interest for $\Omega_1$, given by $\begin{pmatrix} 0 \\ 1 \end{pmatrix}$ and the diagonals of the rectangles $H_i$ and $V_i$ (see Figure \ref{winningvectors}).  Each of these saddle connections wins at some point on $\Omega_1$ and hence partition $\Omega_1$ into $n$ regions.
We provide an explicit description of this partition in Section \ref{returntimeforomega1}. These divisions and saddle connections therefore give us an explicit description of the return time function: if $(x,y) \in \Omega_i$ is arbitrary and $\cvec{a}{b}$ is the winning saddle connection on $M_{x,y}C_i\cdot\mathcal{S}$, then \[R(x,y) = \frac{b}{x(ax + by)}.\]

\begin{figure}[h!]
\centering
\begin{tikzpicture}[scale=0.9]
\coordinate (a) at (0,0);
\coordinate (b) at (0,1);
\coordinate (c) at (1,0);
\coordinate (d) at (1,1);
\coordinate (e) at (3.801937735804838,0);
\coordinate (f) at (3.801937735804838,1);
\coordinate (g) at (1,2.801937735804838);
\coordinate (h) at (3.801937735804838,2.801937735804838);
\coordinate (i) at (7.850855075327144,2.801937735804838);
\coordinate (j) at (7.850855075327144,1);
\coordinate (k) at (7.850855075327144,5.048917339522305);
\coordinate (l) at (3.801937735804838,5.048917339522305);
\coordinate (m) at (12.344814282762078,5.048917339522305);
\coordinate (n) at (12.344814282762078,2.801937735804838);

\draw [line width=1pt] (a) -- (b) node at (-0.4, 0.5) {$v_0$};
\draw [line width=2pt, color=magenta] (a) -- (b);
\draw [line width=1pt] (b)-- (d);
\draw [line width=1pt] (a)-- (c) node at (0.5, -0.4) {$h_0$};
\draw [line width=1pt] (c)-- (e) node at (2.4, -0.4) {$h_1$};
\draw [line width=1pt] (e)-- (f);
\draw [line width=1pt] (f)-- (j) node at (5.825, 0.6) {$h_2$};
\draw [line width=1pt] (j)-- (i);
\draw [line width=1pt] (i)-- (n) node at (10.1, 2.4) {$h_3$};
\draw [line width=1pt] (n)-- (m);
\draw [line width=1pt] (m)-- (k);
\draw [line width=1pt] (k)-- (l);
\draw [line width=1pt] (l)-- (h) node at (3.4, 3.92) {$v_2$};
\draw [line width=1pt] (h)-- (g);
\draw [line width=1pt] (g)-- (d) node at (0.6, 1.9) {$v_1$};
\draw [line width=2pt, color=magenta] (a) -- (d);
\draw [line width=2pt, color=magenta] (c) -- (f);
\draw [line width=2pt, color=magenta] (d) -- (h);
\draw [line width=2pt, color=magenta] (f) -- (i);
\draw [line width=2pt, color=magenta] (h) -- (k);
\draw [line width=2pt, color=magenta] (i) -- (m);

\draw [dashed] (c) -- (d);
\draw[dashed] (d) -- (f);
\draw [dashed] (h) -- (f);
\draw[dashed] (h) -- (i);
\draw [dashed] (k)-- (i);
\end{tikzpicture}
    \caption{The saddle connections of interest for $\Omega_1$ (in magenta) on $\mathcal{S}$ for $n = 7$.}
   \label{winningvectors}
\end{figure}
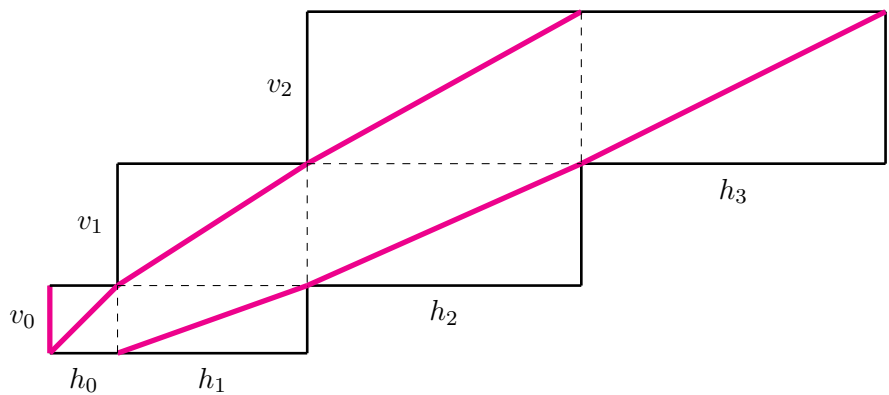

\subsubsection{Description of \texorpdfstring{$R(x,y)$}{R(x,y)} Over \texorpdfstring{$\Omega_2$}{Omega2}}\label{returntimeforomega2}

 We describe $R(x,y)$ for $(x,y) \in \Omega_2$ by showing that $\cvec{0}{1}$ is the only winning vector on $\Omega_2$. 

\begin{figure}[h!]
    \centering
    \begin{tikzpicture}[scale=1]
\begin{axis}[
axis lines=middle,
ymajorgrids=false,
xmajorgrids=false,
xmin=0,
xmax=1.5,
ymin=0,
ymax=1.5,
xlabel={$x$},
ylabel={$y$},]
\draw [line width=1pt] (0,1)-- (1,1);
\draw [line width=1pt] (1,1)-- (1,0);
\draw [line width=1pt, dashed] (1,0)-- (0,1);
\node [draw=none] at (0.7, 0.7) (a) {$\Omega_2$};
\end{axis}
\end{tikzpicture}
    \caption{An illustration of $\Omega_2$ with coordinates in $\R^2$, for any $n$.}
    \label{fig:omega2}
\end{figure}
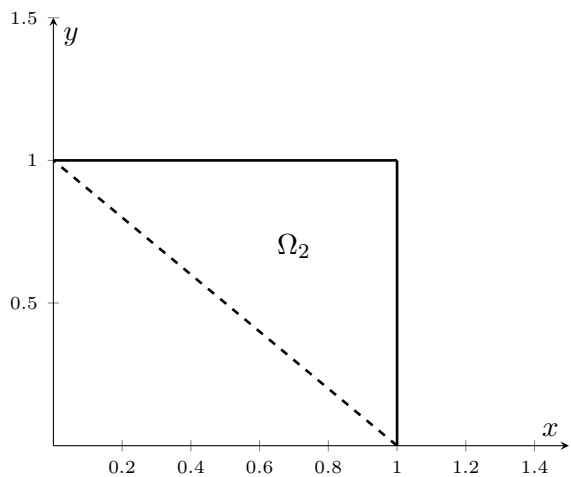

\begin{prop}\label{small prop}
The vector $\cvec{0}{1}$ is the only winning saddle connection on $\Omega_2$, so the return time function is given by $R(x,y) = \frac{1}{xy}$ for $(x,y)\in\Omega_2$.
\end{prop}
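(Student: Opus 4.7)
The plan is to verify directly that $\cvec{0}{1}$ serves as a valid competitor on all of $\Omega_2$, and then to eliminate every other direction by combining a slope comparison with geometric bounds on $\mathcal{S}$. The first step is immediate: computing $M_{x,y}\cvec{0}{1}=\cvec{y}{1/x}$, this vector has slope $1/(xy)$ and horizontal component $y\in(0,1]$, so it is always a candidate, and the return time is at most $1/(xy)$.

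For the slope comparison, let $\cvec{a}{b}$ be an arbitrary holonomy vector on $C_2\cdot\mathcal{S}$. Its image $M_{x,y}\cvec{a}{b}=\cvec{ax+by}{b/x}$ has slope $\tfrac{b}{x(ax+by)}$, so a positive slope forces $b>0$ (once $ax+by>0$). A direct algebraic manipulation then shows that this slope is strictly less than $1/(xy)$ if and only if $a>0$. Hence only vectors with $a,b>0$ can possibly defeat $\cvec{0}{1}$. Undoing the rotation via $C_2\cvec{p}{q}=\cvec{q}{-p}$, such a vector corresponds to a saddle connection $\cvec{p}{q}$ on $\mathcal{S}$ pointing up and to the left, i.e.\ with $p<0$ and $q>0$, and the competitor condition $ax+by\le 1$ becomes $qx+|p|y\le 1$.

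Next I would observe that over the closure of $\Omega_2$, the linear function $qx+|p|y$ attains its minimum $\min(q,|p|)$ only at the corners $(0,1)$ and $(1,0)$, both of which are excluded from $\Omega_2$. Consequently, if every up-left saddle connection on $\mathcal{S}$ satisfies $|p|\ge 1$ and $q\ge 1$, then $qx+|p|y>1$ throughout $\Omega_2$ and no competitor exists.

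The main obstacle is this last geometric claim about $\mathcal{S}$. My approach is to use that, by the formulas in Section~\ref{flatgeometryof2ngon}, every rectangle $V_j$ and $H_i$ in the staircase has side lengths $h_i,v_j\ge 1$, with equality only for the base square $V_0$. Thus any straight-line trajectory between cone points moving both up and to the left must span at least one full rectangle in each coordinate direction; when the trajectory crosses the edge identifications, the total horizontal and vertical components of the holonomy can only grow. The borderline case is the diagonal $\cvec{-1}{1}$ across $V_0$, attaining $|p|=q=1$; on $\Omega_2$ this still yields $qx+|p|y=x+y>1$ because of the defining inequality $y>1-x$. Combining these steps shows that no up-left saddle connection furnishes a valid competitor, so $R(x,y)=1/(xy)$ as claimed.
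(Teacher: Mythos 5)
Your proof is correct and follows essentially the same route as the paper's: positivity of the slope forces $b>0$, the slope comparison reduces to the sign of $a$, and the fact that all nonzero horizontal and vertical separations between cone points are at least $1$ gives $ax+by\ge x+y>1$ on $\Omega_2$, ruling out every competitor. The only cosmetic difference is that you undo the rotation $C_2$ and state the length bound on $\mathcal{S}$ itself, whereas the paper asserts the same minimal-distance fact directly on $C_2\cdot\mathcal{S}$.
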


\begin{proof}
Since we will be studying $\Omega_{2}$, we will let $\mathcal{S}^R = C_2\mathcal{S}$ and study the saddle connections on $\mathcal{S}^R$. We claim that the image of the saddle connection $\cvec{0}{1}$ on $\mathcal{S}^R$ is the only saddle connection whose image is a vector with positive slope and horizontal component at most $1$. 

Let $\cvec{a}{b}$ be some saddle connection on $\mathcal{S}^R$ with an image under $M_{x,y}$ (for $(x,y) \in \Omega_2$) that has positive slope and horizontal component at most $1$. Note that $M_{x,y}\cvec{a}{b} = \cvec{ax + by}{b/x}$ where $0 < ax + by \leq 1$. Observe that $b\ge0$ by as $\Omega_2$ is in the first quadrant. 

For the saddle connection $\cvec{a}{b}$ we compute the slope of its image as \[\frac{b}{x(ax + by)} = \frac{1}{x(x(a/b) + y))}.\] Then, note that if $a/b < 0$ (recall that the slope is positive), then the slope would be greater than the case where $a/b = 0$, i.e. when $\cvec{a}{b} = \cvec{0}{1}$, so we need only consider the cases when $a \geq 0$. Moreover, since $\cvec{a}{b}$ cannot be horizontal (or else its image would be horizontal), $b$ must be positive. 

The shortest positive vertical distance on $\mathcal{S}^R$ between any cone points is $1$, so $b \geq 1$. Suppose for the sake of contradiction that $\cvec{a}{b}\neq\cvec{0}{1}$, and hence $a > 0$. Since the shortest positive horizontal distance between cone points on the surface $\mathcal{S}^R$ is $1$, we have $a \geq 1$. Combining this with the fact that $b \geq 1$, we have \[ax + by \geq x + y.\] Since $(a,b)\in\Omega_2$ we have \[ax + by \geq x + y > x + 1 - x = 1,\] which is a contradiction. This forces $a = 0$, so our desired original saddle connection must be $\cvec{a}{b} = \cvec{0}{1}$ over all of $\Omega_2$.
\end{proof}


\subsubsection{Description of \texorpdfstring{$R(x,y)$}{R(x,y)} Over \texorpdfstring{$\Omega_1$}{Omega1}}\label{returntimeforomega1}

To rephrase an earlier remark, given $(x, y) \in \Omega_1$, we say the saddle connection $v = \cvec{a}{b}$ is a winner at $(x, y)$ if $M_{x, y}v$ has horizontal component $0 < ax + by \leq 1$ and positive slope, and if for all $v' = \cvec{a'}{b'}$ saddle connections such that $M_{x, y}v'$ has horizontal component $0 < a'x + b'y \leq 1$ and positive slope, we have $0 < \mathrm{slope}(M_{x, y}v) \leq \mathrm{slope}(M_{x, y}v')$.
%
In $\Omega_1$, there are only finitely many winners and each winner has an associated convex polygonal region $P_i \subset \Omega_1$ as described in Proposition \ref{big prop}.

\begin{prop}\label{big prop}
    In the region \[P_1 = \{(x, y) \in \Omega_1 \ |\ x + y > 1\}\] the winner is $\cvec{0}{1}$, which forms the vertical edge of the rectangle $V_0$, so the return time is given by \[R(x,y) = \frac{1}{xy}.\] 
    
    For $2 \leq i \leq n-1$, in the region 
    \[P_i = \{(x, y) \in \Omega_1\ | \ xh_{i - 2} + yv_{i - 2} \leq 1, xh_{i - 1} + yv_{i - 1} > 1, xh_1 + y > 1\}\] 
    the winner is $\cvec{h_{i - 2}}{v_{i - 2}} = \cvec{h_{n+1-i}}{v_{n-i}}$. For $2\leq i \leq \lfloor n/2\rfloor +1$, this is the positive-sloped diagonal of the rectangle $V_{i - 2}$. For $\lfloor n/2\rfloor +2\leq i \leq n-1$, this is the positive-sloped diagonal of $H_{n+1-i}$. Hence the return time in either case is given by \[R(x,y) = \frac{v_{i-2}}{x(h_{i-2}x + v_{i-2}y)}.\] 
    
   
    
    
    In the region \[P_n = \{(x, y) \in \Omega_1\ |\ h_1x + y \leq 1\}\] the winner is $\cvec{h_1}{v_0}$, which is the positive-sloped diagonal of the rectangle $H_1$, so the return time is given by \[R(x,y) = \frac{v_0}{x(h_{1}x + v_{0}y)}.\]

\begin{figure}[h!]
    \centering
        \begin{tikzpicture}[scale=0.8]
    \coordinate (a) at (0,0);
    \coordinate (b) at (0,1);
    \coordinate (c) at (1,0);
    \coordinate (d) at (1,1);
    \coordinate (e) at (3.801937735804838,0);
    \coordinate (f) at (3.801937735804838,1);
    \coordinate (g) at (1,2.801937735804838);
    \coordinate (h) at (3.801937735804838,2.801937735804838);
    \coordinate (i) at (7.850855075327144,2.801937735804838);
    \coordinate (j) at (7.850855075327144,1);
    \coordinate (k) at (7.850855075327144,5.048917339522305);
    \coordinate (l) at (3.801937735804838,5.048917339522305);
    \coordinate (m) at (12.344814282762078,5.048917339522305);
    \coordinate (n) at (12.344814282762078,2.801937735804838);
    
    \draw [line width=2pt, color=violet] (a) -- (b);
    \draw [line width=1pt] (b)-- (d);
    \draw [line width=1pt] (a)-- (c);
    \draw [line width=1pt] (c)-- (e);
    \draw [line width=1pt] (e)-- (f);
    \draw [line width=1pt] (f)-- (j);
    \draw [line width=1pt] (j)-- (i);
    \draw [line width=1pt] (i)-- (n);
    \draw [line width=1pt] (n)-- (m);
    \draw [line width=1pt] (m)-- (k);
    \draw [line width=1pt] (k)-- (l);
    \draw [line width=1pt] (l)-- (h);
    \draw [line width=1pt] (h)-- (g);
    \draw [line width=1pt] (g)-- (d);
    \draw [line width=1pt, color=blue] (a) -- (d);
    \draw [line width=1pt, color=red] (c) -- (f);
    \draw [line width=1pt, color=green] (d) -- (h);
    \draw [line width=1pt, color=orange] (f) -- (i);
    \draw [line width=1pt, color=yellow] (h) -- (k);
    \draw [line width=1pt, color=gray] (i) -- (m);
    
    \draw [dashed] (c) -- (d);
    \draw[dashed] (d) -- (f);
    \draw [dashed] (h) -- (f);
    \draw[dashed] (h) -- (i);
    \draw [dashed] (k)-- (i);
    \end{tikzpicture}
    \caption{Color-coded relevant saddle connections for $n = 7$}
    \label{fig:colorcode}
\end{figure}
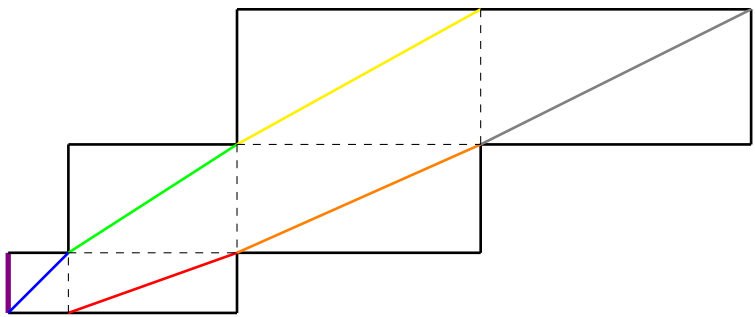

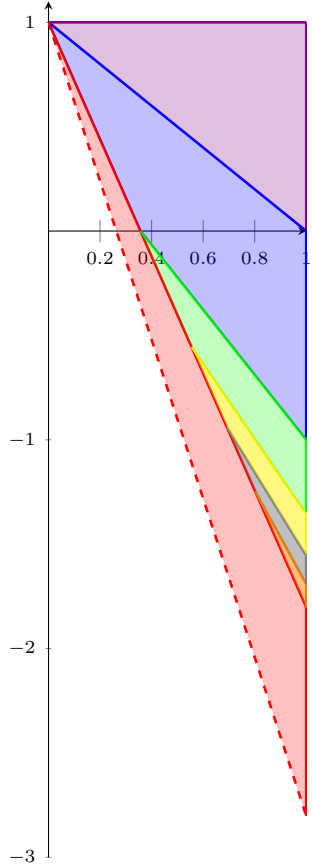
\begin{figure}[h!]
    \centering
    \begin{tikzpicture}
    \begin{axis}[
    axis lines=middle,
    ymajorgrids=false,
    xmajorgrids=false,
    xmin=0,
    xmax=1,
    ymin=-3,
    ymax=1.1,
    xscale=0.5,
    yscale=2]
    
    \coordinate (corner) at (1,1);
    \coordinate (A) at (0,1);
    \coordinate (B) at (1,0);
    \coordinate (C) at (0.357,0);
    \coordinate (D) at (0.555,-0.555);
    \coordinate (E) at (0.692, -0.939);
    \coordinate (F) at (0.802, -1.247);
    \coordinate (G) at (1, -1.802);
    \coordinate (H) at (1, -2.802);
    \coordinate (I) at (1, -1.692);
    \coordinate (J) at (1, -1.555);
    \coordinate (K) at (1, -1.3569);
    \coordinate (L) at (1, -1);

    \draw [line width=1pt, color=violet] (A) -- (corner);
    \draw [line width=1pt, color=violet] (corner) -- (B);
    \draw [line width=1pt, color=blue] (A)--(B);
    \draw [line width=1pt, color=red] (A)--(G);
    \draw [line width=1pt, color=red, dashed] (A)--(H);
    \draw [line width=1pt, color=red] (G)--(H);
    \draw [line width=1pt, color=orange] (F)--(I);
    \draw [line width=1pt, color=orange] (G)--(I);
    \draw [line width=1pt, color=gray] (I)--(J);
    \draw [line width=1pt, color=yellow] (J)--(K);
    \draw [line width=1pt, color=green] (K)--(L);
    \draw[line width=1pt, color=blue] (L)--(B);
    \draw[line width=1pt, color=green] (C)--(L);
     \draw[line width=1pt, color=yellow] (D)--(K);
     \draw[line width=1pt, color=gray] (E)--(J);
     
     \path[fill=violet, opacity=0.25] (A) -- (corner) -- (B);
     \path[fill=blue, opacity=0.25] (A) -- (B) -- (L) -- (C);
     \path[fill=green, opacity=0.25] (C) -- (L) -- (K) -- (D);
     \path[fill=yellow, opacity=0.5] (D) -- (K) -- (J) -- (E);
     \path[fill=gray, opacity=0.5] (E) -- (J) -- (I) -- (F);
     \path[fill=orange, opacity=0.5] (F) -- (I) -- (G);
     \path[fill=red, opacity=0.25] (A)--(G)--(H);
    \end{axis}
    \end{tikzpicture}
        \caption{The corresponding division of $\Omega_1$ for $n = 7$ using coordinates in $\mathbb{R}^2$. Each saddle connection is color coded to match the region it wins in. The purple region is $P_1$, the blue region is $P_2$, the green region is $P_3$, the yellow region is $P_4$, the gray region is $P_5$, the orange region is $P_6$, and the red region is $P_7$.}
    \label{fig:sample_division_of_transversal}
\end{figure}

\end{prop}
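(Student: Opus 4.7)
The proof strategy is to verify three things in each region $P_i$: that the claimed winner is a genuine saddle connection on $\mathcal{S}$, that it satisfies $0 < ax+by \le 1$ with positive image slope, and that no other saddle connection gives a smaller image slope $\mathrm{slope}(M_{x,y}\cvec{a}{b}) = \frac{b}{x(ax+by)}$. The central algebraic tool is the observation that for two candidate saddle connections $\cvec{a_1}{b_1}$ and $\cvec{a_2}{b_2}$ (with $b > 0$ and $ax+by \in (0,1]$), the difference $b_2(a_1x + b_1y) - b_1(a_2x + b_2y) = (a_1b_2 - a_2b_1)x$ shows that the candidate with the larger ratio $a/b$ yields the smaller image slope.

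First I would verify the geometric claims: $\cvec{0}{1}$ is the left edge of $V_0$, while for $0 \le j \le \lfloor n/2\rfloor - 1$ the vector $\cvec{h_j}{v_j}$ is the positive-slope diagonal of $V_j$; the identity in Remark \ref{counting_backwards} gives $\cvec{h_j}{v_j} = \cvec{h_{n-1-j}}{v_{n-2-j}}$, so for $\lfloor n/2 \rfloor \le j \le n-2$ this vector is the diagonal of $H_{n-1-j}$. Next I would prove that $h_j/v_j$ is strictly increasing in $j$ over $0 \le j \le n-2$: the product-to-sum identity applied to the explicit formulas reduces $h_{j+1}v_j - h_j v_{j+1}$ to $\tfrac{1}{2}\csc(\pi/(2n))\csc(\pi/n)[\cos(\pi/(2n)) - \cos(3\pi/(2n))]$, a positive quantity independent of $j$. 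A second useful observation is that $h_j x + v_j y$ is strictly concave as a function of $j$ (since the second difference of $h_j$ equals $2h_j(\cos(\pi/n) - 1) < 0$, and likewise for $v_j$), so the two inequalities $h_{i-1}x + v_{i-1}y > 1$ and $h_1 x + y = h_{n-2} x + v_{n-2} y > 1$ from the definition of $P_i$ jointly ensure $h_j x + v_j y > 1$ for all $i-1 \le j \le n-2$. Combining with the comparison formula, the listed candidate $\cvec{h_{i-2}}{v_{i-2}}$ has the largest ratio among all listed candidates satisfying the horizontal constraint in $P_i$ and hence wins among them; the analogous argument handles $P_1$ and $P_n$.

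The main obstacle is ruling out saddle connections outside the listed family. Here the specific shape of $\Omega_1$ is crucial: the lower bound $y > 1 - 2(1+\cos(\pi/n))x$ is precisely the image of the constraint line $x+y = 1$ under the parabolic generator $S' = \bigl(\begin{smallmatrix} 1 & -(1+h_1) \\ 0 & 1 \end{smallmatrix}\bigr)$ of $\Gamma$, which acts on holonomy vectors by $\cvec{a}{b} \mapsto \cvec{a-(1+h_1)b}{b}$, so $\Omega_1$ serves as a fundamental domain selecting one representative per $\langle S' \rangle$-orbit of candidate saddle connections. Invoking the two-orbit decomposition $\Lambda_{sc}(\mathcal{S}) = \Gamma \cdot w_1 \sqcup \Gamma \cdot w_2$ and tracing through the cylinder decompositions of $\mathcal{S}$ in each direction corresponding to a listed vector, one verifies that within $\Omega_1$ the only $\langle S'\rangle$-orbit representatives with $a/b \in [0, h_1]$ satisfying $ax + by \le 1$ are exactly the listed saddle connections. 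This direction-by-direction enumeration is the technical core: one must confirm that no primitive saddle connection in any of the relevant directions is shorter than the listed diagonal (which would produce a stronger competitor), and that no additional direction contributes a competitor that is not already accounted for by $S'$-translation.
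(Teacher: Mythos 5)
Your opening reduction is sound and matches the paper's: the identity $b_2(a_1x+b_1y)-b_1(a_2x+b_2y)=(a_1b_2-a_2b_1)x$ is exactly the computation the paper uses to show $M_{x,y}$ preserves slope order, your monotonicity of $h_j/v_j$ is the content of Lemma \ref{lem:order}, and the argument that among the \emph{listed} candidates the one with largest $a/b$ satisfying $ax+by\le 1$ wins is how the paper's ``Finishing Touches'' section derives the regions $P_i$. (One small caution: your concavity claim for $j\mapsto h_jx+v_jy$ is stated via the second differences of $h_j$ and $v_j$ separately, but $y<0$ on most of $\Omega_1$, so $v_jy$ is convex there; what actually saves the argument is that $g(j)=h_jx+v_jy$ satisfies $g(j+1)=2\cos(\pi/n)g(j)-g(j-1)$ and is therefore a sinusoid in $j$, positive and hence concave on the relevant range.)

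The genuine gap is in your final paragraph. Ruling out every saddle connection outside the listed family is not a verification one can wave at --- it is the entire content of the paper's Appendix \ref{BigPropAppendix}, and your proposed mechanism does not supply it. Passing to $\langle S'\rangle$-orbit representatives and invoking the two-orbit decomposition $\Lambda_{sc}(\mathcal{S})=\Gamma\cdot w_1\sqcup\Gamma\cdot w_2$ does not by itself produce a finite list of competitors: there are infinitely many saddle-connection directions with $a/b\in[0,h_1]$, and ``tracing through the cylinder decompositions in each direction corresponding to a listed vector'' cannot rule out competitors in directions \emph{not} corresponding to listed vectors, which is precisely the danger. The paper's route to finiteness is different and essential: for each region $P_i$ it derives an explicit upper bound on the horizontal component of any potential winner (Condition 1, e.g.\ $a<1+h_1+h_{i+1}$ on $P_{i+2}$), combines it with slope bounds (Conditions 2--4), first reduces to the line $x=1$ via Lemma \ref{check_on_x=1}, and then performs an exhaustive combinatorial case analysis over which vertices a saddle connection joins and which staircase edges it crosses, including separate treatment of the two ends of the staircase and of small $n$. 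Until you either carry out an enumeration of that kind or give a genuinely different finiteness argument, the sentence ``one verifies that \dots the only $\langle S'\rangle$-orbit representatives \dots are exactly the listed saddle connections'' is an assertion of the proposition's hardest part rather than a proof of it.
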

The proof of Proposition \ref{big prop} is quite long, and requires a case-by-case analysis, hence it is postponed to Appendix \ref{BigPropAppendix}.  On the other hand, using hyperbolic geometry we can compute the volume of $\slr/\Gamma_{O_{2n}}$ and compare it to the volume computation via integrating the return time function over the horocycle flow.  More precisely, since $R(x,y)$ is a roof function over $\Omega$ in the suspension space $\slr/\Gamma$, computing the volume under $R(x,y)$ should yield the volume of $\slr/\Gamma$.
The table in Figure \ref{volumetable} provides numerical volume computations for several values of $n$ to experimentally verify our division of the Poincar\'e section.

\begin{figure}[h!]
\centering
\begin{tabular}{ |c|c|c|c| } 
 \hline
 $n$ & Computed Volume & Actual Volume & Error \\
 \hline
 \hline
 2 & 4.9348 & $\pi^2/2$ & $< 0.001\%$\\
 \hline
 3 & 6.5797 & $2\pi^2/3$ & $< 0.001\%$\\ 
 \hline
 4 & 7.4022 & $3\pi^2/4$ & $< 0.001\%$\\ 
 \hline
 5 & 7.8957 & $4\pi^2/5$ & $< 0.001\%$ \\ 
 \hline
 6 & 8.2247 & $5\pi^2/6$ & $< 0.001\%$ \\
 \hline
 7 & 8.4597 & $6\pi^2/7$ & $< 0.001\%$ \\ 
 \hline
 8 & 8.6359 & $7\pi^2/8$ & $< 0.001\%$ \\ 
 \hline
 9 & 8.7730 & $8\pi^2/9$ & $< 0.001\%$ \\ 
 \hline
 10 & 8.8826 & $9\pi^2/10$ & $< 0.001\%$ \\
 \hline
 50 & 9.7622 & $49\pi^2/50$ & $< 0.001\%$ \\
 \hline
 100 & 9.7709 & $99\pi^2/100$ & $< 0.001\%$ \\
 \hline
\end{tabular}
\caption{Numerical computation of the volume of $\slr/\Gamma_{O_{2n}}$ using our Poincar\'e section and return time function compared to the exact volume. Within the limits of our computational software, there was essentially no error in the computed volume.}
\label{volumetable}
\end{figure}

\section{Slope Gap Distributions}
\label{slopegapdistributions}

Let $k\in \mathbb{N}$. Denote by $V(k)$ the vertical strip $(0,k]\times[0,\infty)$ and let $\Lambda^{\mathcal{S}}_{sc}(k)$ be the set of holonomy vectors of $\mathcal{S}$ that lie inside $V(k)$.  Let 
\[
\mathbb{S}(k) = \{0=s_0<s_1<s_2<\dots\}
\]
denote the ordered set of slopes of vectors in $\Lambda^{\mathcal{S}}_{sc}(k)$.
Recall that the horocycle flow 
\[
\left\{h_s = \begin{pmatrix} 1 & 0 \\ -s & 1 \end{pmatrix} \ \vline \ s \in \mathbb{R}\right\}
\]
acts on slopes by translation, so the slopes in $\mathbb{S}(k)$ are precisely the times when one of the vectors in $\Lambda^{\mathcal{S}}_{sc}(k)$ hits the horizontal axis.

From \eqref{S2} we see that the Veech group $\Gamma$ of $\mathcal{S}$ contains the parabolic element $S_2^{-1}=h_{1}$, which implies that $\mathcal{S}$ is periodic under the horocycle flow with period $1$. Hence $h_1\Lambda^{\mathcal{S}}_{sc}(k)= \Lambda^{\mathcal{S}}_{sc}(k)$, i.e. after subtracting $1$ from all of the slopes of holonomy vectors in the vertical strip $V(k)$ and removing the slopes that are negative, we get the same ordered set of slopes $\mathbb{S}(k)$ that we started with. In other words, there exists some $N(k)\in\mathbb{N}$ such that $s_{N(k)+i}-1=s_i$ for all $i\geq 0$.  This in turn implies that the slope gaps $s_i-s_{i-1}$ repeat after $i=N(k)$, since 
\[s_{N(k)+i} -s_{N(k)+i-1} = (s_i+1)-(s_{i-1}+1) = s_i-s_{i-1}.\]
Thus the gaps between the elements of $\mathbb{S}(k)$ are given by the (not renormalized) gap set
\[
\overline{\mathcal{G}}(k) = \{s_{i} - s_{i-1} \ \vline \ s_i \in \mathbb{S}(k), 1\leq i\leq N(k)\}.
\]
Since $N(k)$ grows like $k^2$ (this follows from work of Veech \cite{Vee95} for any Veech surface, of which $\mathcal{S}$ is an example), we may thus define our renormalized gap set to be 
\[
\mathcal{G}(k) = \{k^2(s_{i} - s_{i-1}) \ \vline \ s_i \in \mathbb{S}(k), 1\leq i\leq N(k)\}.
\]
Recall that our goal is to find the distribution of the renormalized gaps in $\mathcal{G}(k)$ as $k\to\infty$, i.e. we want to find a probability density function $f:\R\to[0,\infty)$ such that 
\[
\lim_{k\to\infty} \frac{|\mathcal{G}(k)\cap(a,b)|}{N(k)} = \int_a^b f(x) dx
\]
for any interval $(a,b)\subseteq \mathbb{R}$.  We will do this by representing the quantity on the left as a limit of Birkhoff sums over longer and longer periodic orbits for the first return map to $\Omega$ and leveraging known ergodic theory results for such limits.

Let 
\[
g_k = \begin{pmatrix}1/k & 0\\0 & k\end{pmatrix}.
\]
Observe that $g_k V(k) = V(1)$ and  $\mathrm{slope}(g_kv) = k^2\mathrm{slope}(v)$ for any $v\in\mathbb{R}^2$ and $k\in \mathbb{N}$.  Moreover, one can verify that
\begin{align*}
 g_k\cdot \Lambda^{\mathcal{S}}_{sc}(k) &= \Lambda^{g_k\mathcal{S}}_{sc}(1),\\
 k^2\mathbb{S}(k) &= \mathbb{S}^{g_k\mathcal{S}},
\end{align*}
and thus
\begin{align*}
\mathcal{G}(k) &= \overline{\mathcal{G}}^{g_k\mathcal{S}}
\end{align*}
where $\mathbb{S}^{g_k\mathcal{S}}$ denotes the ordered set of slopes of vectors in $\Lambda^{g_k\mathcal{S}}_{sc}(1)$ and $\overline{\mathcal{G}}^{g_k\mathcal{S}}$ is the associated non-renormalized gap set. In other words, the renormalized gaps between slopes of saddle connections on $\mathcal{S}$ with horizontal length less than or equal to $k$ is precisely equal to the set of (non-renormalized) gaps between slopes of saddle connections on $g_k\mathcal{S}$ with horizontal length less than or equal to $1$.
 
Let $T:\Omega\to\Omega$ denote the first return map for the horocycle flow to the Poincar\'e section $\Omega$, i.e. for $x=g\Gamma\in\Omega$ (considered as a subset of $\slr/\Gamma$) we have
\[
T(x) = h_{R(x)}x
\]
where $R:\Omega\to\mathbb{R}^+$ is the usual return time function. 

Observe that since $\Gamma\in\Omega$ is periodic under the horocycle flow with period $1$, then $g_k\Gamma\in\Omega$ is periodic with period $k^2$. 
Since the horocycle flow acts on slopes by translation, it preserves gaps between slopes. 
Then the slope gaps in 
\[
\mathcal{G}(k) = \overline{\mathcal{G}}^{g_k\mathcal{S}} = \{s'_{i} - s'_{i-1} \ \vline \ s'_i \in \mathbb{S}^{g_k\mathcal{S}}, 1\leq i \leq N(k)\}
\]
are precisely the times elapsed between consecutive intersections of the orbit of $g_k\Gamma$ with $\Omega$ under the horocycle flow, i.e.
\[
s'_{i} - s'_{i-1} = R(T^{i}(g_k\Gamma))
\] 
for all $i \geq 0$. If we let \[\chi_t(x) = \begin{cases} 1 & R(x) < t \\ 0 & R(x) \geq t
\end{cases},\] 
be the indicator function on $R^{-1}([0, t))$, then we can write a formula for the distribution of slope gaps as a limit of Birkhoff sums of the form
\begin{align}
    \lim_{k \to \infty} \frac{|\mathcal{G}(k) \cap [0,t)|}{N(k)} &= \lim_{k \to \infty} \frac{1}{N(k)} \sum_{i=0}^{N(k) - 1} \chi_t(T^i(g_k\Gamma)).\label{Birkhoff}
\end{align}
Thus, we have reframed our geometric problem about slope gaps on $\mathcal{S}$ into a dynamical problem concerning return times for a sequence of periodic points in $\Omega$.

We may now apply the following theorem and its corollary, which are adapted from Theorems 4.1-4.2 in \cite{ACL}, or Theorem 2.5 in \cite{Ath13}, which applies in a more general context. An  analogous theorem that applies to non-lattice surfaces also appears in \cite{ACL, Ath13} and relies on the equidistribution of long horocycle orbits proved in \cite{DS84}, however we do not need that result for our setting.

\begin{thm}[\cite{ACL}, Theorem 4.1]\label{ACL41}
Let $(X,\omega)$ be a lattice surface with Veech group $\Gamma$, and let $\Omega\subset\slr/\Gamma$ and $T$ be the Poincar\'e section and associated return map defined previously. If $x_k\in\Omega$ is a sequence of points that are periodic under the return map $T$ with periods $N(k)\to\infty$ as $k\to\infty$, then for any bounded, measurable function $f:\Omega\to\mathbb{R}$, we have
\[
\lim_{k\to\infty} \frac{1}{N(k)} \sum_{i=0}^{N(k)-1} f(T^i(x_k)) = \int_{\Omega} f dm
\]
where $m$ is the unique ergodic probability measure for $T$ supported on $\Omega$.
\end{thm}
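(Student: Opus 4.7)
The plan is to reduce the statement to the classical equidistribution of closed horocycle orbits on the lattice quotient $\slr/\Gamma$ and then transfer that equidistribution from the continuous flow to the discrete return map $T$ via the standard suspension correspondence.

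First I would set up the suspension dictionary. The Poincar\'e section $\Omega$ carries a natural $T$-invariant transverse measure $m_0$, characterized by the property that the bijection $(y,t)\mapsto h_{-t}y$ on $\{(y,t):y\in\Omega,\ 0\le t<R(y)\}$ pulls Haar measure on $\slr/\Gamma$ back to $Z^{-1}\,dm_0\,dt$ with $Z=\int_\Omega R\,dm_0$. By the standard correspondence between flow-invariant measures on a suspension and $T$-invariant measures on its base, combined with Dani's classification of horocycle-invariant measures on a lattice quotient, the normalization $m:=m_0/m_0(\Omega)$ is the unique ergodic $T$-invariant probability measure on $\Omega$. I would then invoke the equidistribution of closed horocycles on lattice surfaces (Sarnak, Dani-Smillie): if $\mu_k$ denotes the normalized arclength measure on the closed horocycle orbit through $x_k$, then $\mu_k\to\mu_{\mathrm{Haar}}$ weak-$*$ as its period $\ell(k)=\sum_{i=0}^{N(k)-1}R(T^ix_k)$ tends to infinity. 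The explicit return time formulas in Propositions \ref{small prop} and \ref{big prop} give $R\ge 1$ on $\Omega$, so $\ell(k)\ge N(k)\to\infty$ and the hypothesis applies.

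To transfer this flow statement to $T$, I would use a thin-tube test function. For continuous bounded $f:\Omega\to\R$ and $0<\epsilon<\inf_\Omega R$, define $F_\epsilon$ on $\slr/\Gamma$ by $F_\epsilon(h_sy)=\epsilon^{-1}f(y)$ for $y\in\Omega,\ 0\le s<\epsilon$, and $F_\epsilon=0$ elsewhere. Direct computation using the explicit forms of $\mu_k$ and Haar measure gives
\[
\int F_\epsilon\,d\mu_k=\frac{1}{\ell(k)}\sum_{i=0}^{N(k)-1}f(T^ix_k),\qquad \int F_\epsilon\,d\mu_{\mathrm{Haar}}=\frac{1}{Z}\int_\Omega f\,dm_0.
\]
Sending $k\to\infty$ yields $\ell(k)^{-1}\sum f(T^ix_k)\to Z^{-1}\int_\Omega f\,dm_0$, and specializing to $f\equiv 1$ gives $N(k)/\ell(k)\to m_0(\Omega)/Z$. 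Dividing these two limits cancels $Z$ and produces $N(k)^{-1}\sum f(T^ix_k)\to m_0(\Omega)^{-1}\int_\Omega f\,dm_0=\int_\Omega f\,dm$, as desired.

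The main technical obstacle is that $F_\epsilon$ is discontinuous on the boundary of the flow tube, so weak-$*$ convergence of $\mu_k$ does not apply to $F_\epsilon$ directly. I would bypass this by sandwiching $F_\epsilon$ between continuous functions that differ from it only on a $\delta$-collar of the tube, a set of Haar mass $O(\delta)$, and letting $\delta\to 0$; equivalently one can invoke the portmanteau theorem using that the boundary of the tube is Haar-null. Extending the conclusion from continuous bounded $f$ to bounded measurable $f$ is then routine using regularity of $m$ together with a monotone class or portmanteau-type approximation.
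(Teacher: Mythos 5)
Your proposal is correct and follows essentially the same route as the paper: realize $\slr/\Gamma$ as a suspension over $\Omega$, invoke Sarnak's equidistribution of long closed horocycles, and transfer the statement to the return map $T$; your thin-tube test functions are just an explicit implementation of the paper's disintegration identity $d\rho_k = \frac{N(k)}{\tau(k)}\,ds\,d\sigma_k$. The one genuine (small) difference is that you extract $N(k)/\ell(k)\to m_0(\Omega)/Z$ from the case $f\equiv 1$ rather than citing Veech for it, which makes the argument more self-contained; note, though, that your final extension to arbitrary bounded measurable $f$ is exactly as delicate as in the paper's own proof, since weak-$*$ convergence of the empirical measures only controls integrals of functions that are continuous $m$-almost everywhere (which is what the application to $\chi_t$ actually requires).
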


\begin{proof}
We can realize $\slr/\Gamma$ as a suspension space over $\Omega$ with roof function given by the return time function:
\[
\slr/\Gamma \cong \{ (x,s) \ \vline \ x\in\Omega,\ s\in [0,R(x,y)]\}/\sim
\]
where $(x,R(x))\sim (T(x),0)$. The probability Haar measure $\mu$ on $\slr/\Gamma$ then decomposes as $d\mu = C ds dm$ for some constant $C$. A point $x_k\in\Omega$ is periodic for the return map $T$ if and only if $x_k$ is periodic under the continuous horocycle flow on $\slr/\Gamma$.
Let $\tau(k)$ denote the period of $x_k$ under the horocycle flow and let $\rho_k$ denote the invariant probability measure on $\slr/\Gamma$ supported on the periodic horocycle orbit of $x_k$. Let $\sigma_k$ denote the invariant probability measure on $\Omega$ supported on the periodic orbit of $x_k$ under $T$.
Observe that for bounded, measurable $f:\slr/\Gamma\to\mathbb{R}$, 
\begin{align*}
\int f d\rho_k &=  \frac{1}{\tau(k)}\int_0^{\tau(k)} f(h_sx_k)ds,\\
&= \frac{N(k)}{\tau(k)}\frac{1}{N(k)}\sum_{i=0}^{N(k)-1}\int_0^{R(T^i(x_k))} f(h_sT^i(x_k))ds,\\
&= \frac{N(k)}{\tau(k)} \int_\Omega \int_0^{R(x)} f(h_sx)ds d\sigma_k(x),
\end{align*}
in other words, $d\rho_k = \frac{N(k)}{\tau(k)} ds d\sigma_k$.  Moreover, by a theorem of Sarnak \cite{S81}, we know that long periodic horocycle orbits equidistribute with respect to the Haar measure, i.e. $d\rho_k\to d\mu= dsdm$. It follows from results in \cite{Vee98} that $\frac{N(k)}{\tau(k)}\to C$, and this implies that the corresponding long periodic orbits under $T$ must equidistribute in $\Omega$, i.e. $d\sigma_k\to dm$, which gives the result.
\end{proof}

One can check that the measure $dm$ on $\Omega$ is the appropriately scaled Lebesgue measure $dxdy$ on our parametrization of the Poincar\'e section given in Proposition \ref{transversalprop}.

\begin{cor}[\cite{ACL}, Theorem 4.2]
Let the setting be as in Theorem \ref{ACL41}, and
let $\mathcal{G}(k)$ be the renormalized gap set for  $(X,\omega)$. 
Then 
\begin{align}
\lim_{k \to \infty} \frac{|\mathcal{G}(k) \cap [0,t)|}{N(k)} = m\left(\{(x,y)\in\Omega \ \vline \ R(x,y)\in[0,t)\}\right).\label{measurecorollary}
\end{align}
\end{cor}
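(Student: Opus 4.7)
The plan is to recognize the corollary as an essentially direct application of Theorem \ref{ACL41} to the specific sequence of periodic points $g_k\Gamma$ and the specific bounded measurable function $\chi_t$, combined with the Birkhoff-sum reformulation already established in \eqref{Birkhoff}.

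First I would verify the hypotheses of Theorem \ref{ACL41} for our setting. Take $x_k = g_k\Gamma \in \Omega$. Since $\Gamma$ contains the parabolic $h_1$, the point $\Gamma$ is periodic under the horocycle flow with period $1$, and hence $g_k\Gamma$ is periodic with period $k^2$ (as noted in the text). Under the return map $T$, the orbit of $g_k\Gamma$ is therefore periodic with period exactly $N(k)$ — the same integer counting slope gaps on $g_k\mathcal{S}$ with horizontal component at most $1$. Since $\mathcal{S}$ is a Veech surface, $N(k) \to \infty$ (in fact $N(k) \asymp k^2$ by Veech's quadratic growth), so the hypothesis of Theorem \ref{ACL41} is satisfied.

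Next I would take $f = \chi_t$, where $\chi_t$ is the indicator function of the sublevel set $R^{-1}([0,t)) \subset \Omega$. Since $R$ is piecewise of the form $b/(x(ax+by))$ on the finitely many polygonal pieces described in Propositions \ref{small prop} and \ref{big prop}, the set $\{R < t\}$ is a Borel subset of $\Omega$, so $\chi_t$ is bounded (by $1$) and measurable, and Theorem \ref{ACL41} applies to give
\[
\lim_{k\to\infty} \frac{1}{N(k)} \sum_{i=0}^{N(k)-1} \chi_t\bigl(T^i(g_k\Gamma)\bigr) = \int_\Omega \chi_t\, dm.
\]
The left-hand side is precisely $\lim_{k\to\infty} |\mathcal{G}(k) \cap [0,t)|/N(k)$ by the Birkhoff reformulation \eqref{Birkhoff}, while the right-hand side equals $m(\{(x,y)\in\Omega : R(x,y)\in[0,t)\})$ by definition of the integral of an indicator function. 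Combining these yields \eqref{measurecorollary}.

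The only thing that needs care is matching up $\chi_t$, as defined in the text, with the indicator of the sublevel set. As given, $\chi_t(x)=1$ when $R(x)<t$ and $0$ when $R(x)\geq t$, which is exactly the indicator of $R^{-1}([0,t))$. No additional argument is required because the reformulation of the geometric slope-gap counting problem as a Birkhoff sum was already completed in the lead-up to \eqref{Birkhoff}; the corollary is simply that Birkhoff sum statement composed with Theorem \ref{ACL41}. I would not expect any real obstacle here — the substantive work (identifying the Poincar\'e section, describing $R$, and establishing equidistribution of long periodic horocycle orbits via Sarnak's theorem) has already been carried out in Theorem \ref{ACL41}.
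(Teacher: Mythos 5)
Your proposal is correct and matches the paper's own proof, which likewise cites Theorem \ref{ACL41} applied to $f=\chi_t$ and the sequence $x_k=g_k\Gamma$ together with the identity \eqref{Birkhoff}. The extra care you take in checking measurability of $\chi_t$ and the periodicity of $g_k\Gamma$ under $T$ is harmless elaboration of the same argument.
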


\begin{proof}
This is a direct application of Theorem \ref{ACL41} for the function $\chi_t$ and the sequence of points $x_k = g_k\Gamma$ with periods $N(k) = k^2$, along with the identity in \eqref{Birkhoff}.
\end{proof}

With our parametrization of $\Omega$ and formula for the return time function at every point in $\Omega$ calculated in the last section, it is now a standard problem in multivariable calculus to compute the right hand side of \eqref{measurecorollary}.  This gives us Theorem \ref{distribution_description} from the introduction.

Since our limiting distribution has no support at $0$ (indicating that there are no small gaps between slopes of saddle connections), this implies that the slope gap distribution for $O_{2n}$ is not random (if this were the case, we would expect an exponential distribution \textit{with} support at $0$). In Figure \ref{trgt3} we provide a few examples of the slope gap distributions on $O_{2n}$ for several values of $n$. 

An interesting conclusion that can be drawn from our calculation of the distribution for the $2n$-gon is that the slope gap distribution of a translation surface is not always unimodal.  For example, in the distribution for $O_{14}$ one finds that there is a local maximum at $t\approx 0.715353$ of about $0.691264$, followed by a local minimum at $t\approx 0.781831$ of about $0.681558$, followed by another local maximum at $t\approx 0.870497$ of about $0.700232$.  This answers a question of Jayadev Athreya.

\begin{figure}[h!]
\centering
\begin{subfigure}{0.49\textwidth}
\centering
  \includegraphics[width=.9\linewidth]{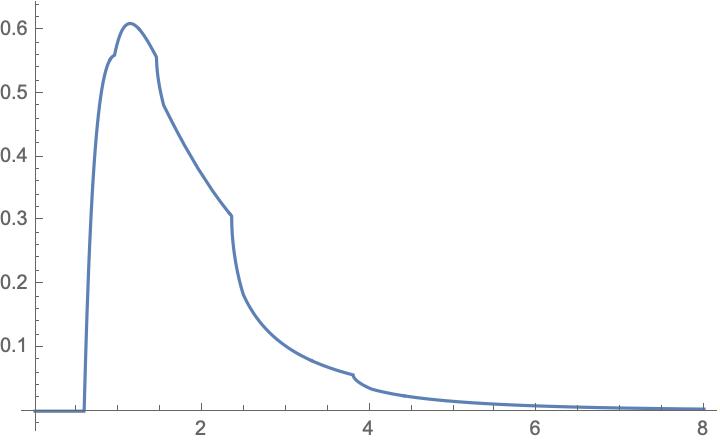}
  \caption{Limiting gaps for $O_{10}$}
  \label{O10}
\end{subfigure}
\hfill
\begin{subfigure}{0.49\textwidth}
\centering
  \includegraphics[width=.9\linewidth]{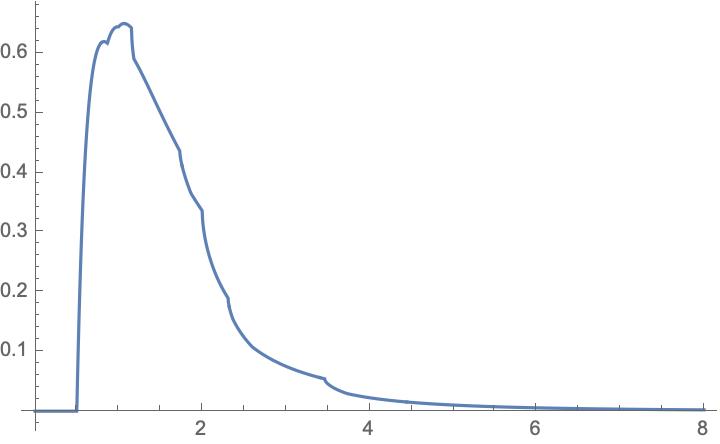}
  \caption{Limiting gaps for $O_{12}$}
  \label{O12}
\end{subfigure}
\label{trgt}
\\
\begin{subfigure}{0.49\textwidth}
\centering
  \includegraphics[width=.9\linewidth]{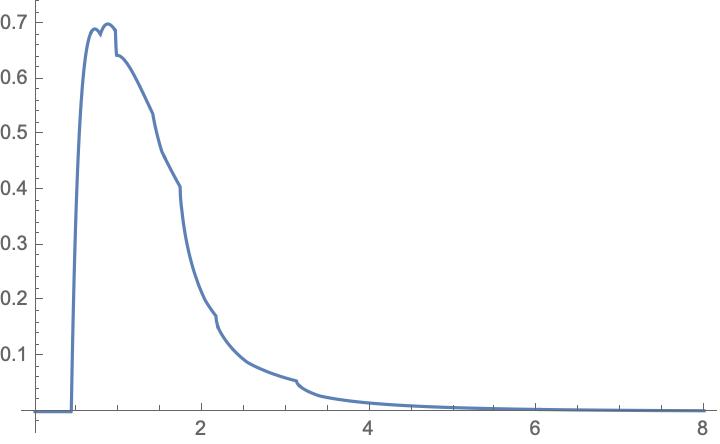}
  \caption{Limiting gaps for $O_{14}$}
  \label{O14}
\end{subfigure}
\hfill
\begin{subfigure}{0.49\textwidth}
\centering
  \includegraphics[width=.9\linewidth]{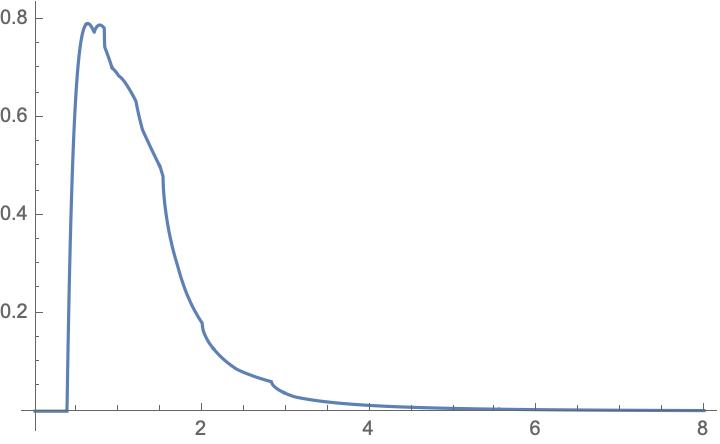}
  \caption{Limiting gaps for $O_{16}$}
  \label{O16}
\end{subfigure}
\label{trgt2}
\\
\begin{subfigure}{0.49\textwidth}
\centering
  \includegraphics[width=.9\linewidth]{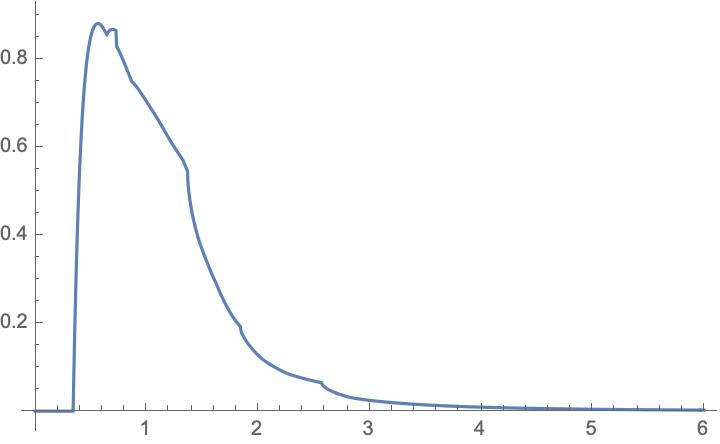}
  \caption{Limiting gaps for $O_{18}$}
  \label{O18}
\end{subfigure}
\hfill
\begin{subfigure}{0.49\textwidth}
\centering
  \includegraphics[width=.9\linewidth]{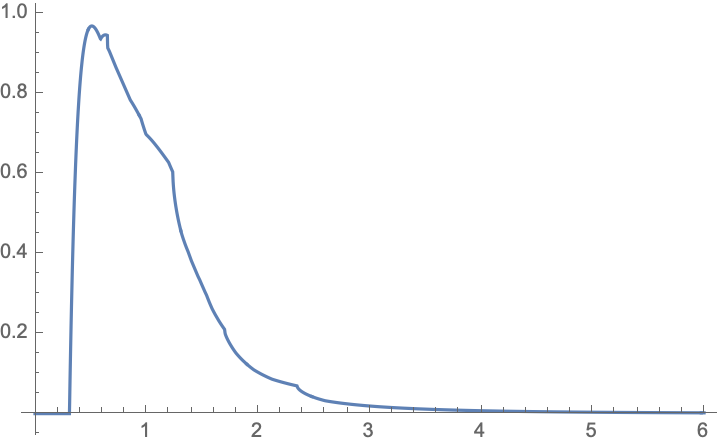}
  \caption{Limiting gaps for $O_{20}$}
  \label{O20}
\end{subfigure}
\caption{Gap distributions for the $2n$-gon for several values of $n$. Note the changed scaling on the horizontal axis in the last two graphs.}
\label{trgt3}
\end{figure}

\section{Bounds on the Number of Non-Differentiability Points}\label{sect:bounds}

Now that we have an explicit description of the slope gap distribution, we can prove Theorem \ref{linear_upper_bound}. 
Recall the theorem statement as follows.

\thmbounds*


\begin{proof}
    Since the slope gap distribution is computed by finding the area bounded by $R(x,y) = 0$ and $R(x,y) = t$ over $\Omega = \Omega_1 \cup \Omega_2$, we can see that the distribution has a non-differentiable point at $t$ only if the level set of the return time function crosses a boundary of some $P_i$ at time $t$. 
    Recall our division of $\Omega_1$ into the regions $P_1, P_2, \hdots, P_n.$ We can disregard the boundary crossings on $\Omega_2$ since they coincide with the boundary crossings for $P_1$ (they are the same region in $\mathbb{R}^2$ and have the same return time function as well). Thus, we can find an upper bound on the number of non-differentiable points by simply finding a bound on the number of boundary crossings on $\Omega_1$. The bulk of our proof will be in finding this upper bound. 
    We will then use the information we gather for computing the upper bound to get a linear lower bound in the second half of this section. 
    There are five cases we must consider.
    
    \medskip
    
    \textbf{Case 1.} We first count the number of boundary crossings in the region $P_1$. Recall the definition of \[P_1 = \{(x, y) \in \Omega_1 \ |\ x + y > 1\}\] and the return time function $R(x,y) = \frac{1}{xy}$ over $P_1$, which are displayed in Figure \ref{fig:P_1} for any $n$.
    \begin{figure}[h!]
        \centering
        \begin{tikzpicture}[scale=0.65]
    \begin{axis}[
    axis lines=middle,
    ymajorgrids=false,
    xmajorgrids=false,
    xmin=0,
    xmax=1.5,
    ymin=0,
    ymax=1.5,
    xlabel={$x$},
    ylabel={$y$},]
    \addplot[line width=1.5pt, black, domain=0.1:1.5] { 1/(3*x)};
    \draw [line width=1pt] (0,1)-- (1,1);
    \draw [line width=1pt] (1,1)-- (1,0);
    \draw [line width=1pt, dashed] (1,0)-- (0,1);
    \path[fill=violet, opacity=0.25] (0,1) -- (1,1) -- (1,0);
    
    \node [draw=none] at (0.7, 0.7) (a) {$P_1$};
    \end{axis}
    \end{tikzpicture}
        \caption{An illustration of $P_1$ with coordinates in $\R^2$, for any $n$, along with the hyperbola $R(x,y) = t$ (in black; here $t = 3$).}
        \label{fig:P_1}
    \end{figure}
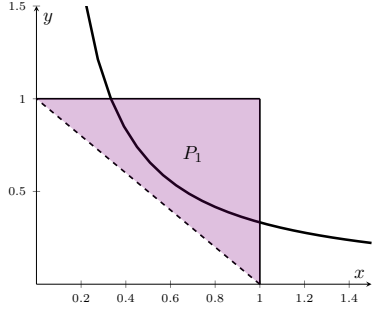
    
    As $t \to \infty$, the function $\frac{1}{xy} = t$ moves towards the origin, but never crosses the points $(0,1)$ and $(1,0)$ (the upper left and lower right corners of $P_1$). Thus, $\frac{1}{xy}$ first enters $P_1$ through the point $(1,1)$, which it crosses when \[t = 1,\] giving us a possible point of non-differentiability there. Moreover, when \[t = 4,\] $\frac{1}{xy} = t$ crosses the line $y = 1 - x$, giving us another possible point of non-differentiability. Since this function never passes through $(0,1)$ or $(1,0)$, there are no other possible boundary crossings for this region. Thus, we have \textbf{two} boundary crossings in this case.
    
    \medskip
    
    \textbf{Case 2.} We now count the number of boundary crossings in the region $P_2$. Recalling that $h_0 = v_0 = 1$ for all $n$, we have that \[P_2 = \{(x, y) \in \Omega_1\ | \ x + y \leq 1, xh_1 + yv_1 > 1, xh_1 + y > 1\}\] with $R(x,y) = \frac{1}{x(x + y)}.$ The constraints on $P_2$ give us four sides: the line $x + y = 1$, the line $xh_{1} + yv_{1} = 1$, the line $xh_1 + y = 1$, and the line $x = 1$. Denote the intersection of $x + y = 1$ and $xh_1 + y = 1$ as $A$, the intersection of $x + y = 1$ and $x = 1$ as $B$, the intersection of $xh_{1} + yv_{1} = 1$ and $x = 1$ as $C$, and the intersection of $xh_{1} + yv_{1} = 1$ and $xh_1 + y = 1$ as $D$.  Observe that for any $n$, we have $A=(0,1)$, $B=(1,0)$, and $C=(1,-1)$. Refer to the diagram in Figure \ref{fig:nondiffable_case_2} for an example of the shape of $P_2$ and the given labeling.
    
    \begin{figure}[h!]
    \centering
    \begin{tikzpicture}
    \begin{axis}[
    axis lines=middle,
    ymajorgrids=false,
    xmajorgrids=false,
    xmin=0,
    xmax=1.2,
    ymin=-1.2,
    ymax=1.2,
    xscale=0.5,
    yscale=1,
    xtick={0,0.5,1}]

    \coordinate[label=above right:$A$] (A) at (0,1);
    \coordinate[label=above right:$B$] (B) at (1,0);
    \coordinate[label=below right:$C$] (C) at (1, -1);
    \coordinate[label={[label distance=3pt]below left:$D$}] (D) at (0.357,0);

    \draw [line width=1pt, color=blue] (A)--(B);
    \draw [line width=1pt, color=blue] (B)--(C);
    \draw[line width=1pt, color=blue, dashed] (C)--(D);
    \draw[line width=1pt, color=blue, dashed] (A)--(D);
    
    \node[draw=none] at (0.5, 0.3) (label) {$P_2$};
    
    \addplot[black, line width=1.5pt, domain=0.1:1.5] {1/(3.6*x) - x};
     
     \path[fill=blue, opacity=0.25] (A) -- (B) -- (C) -- (D);
    \end{axis}
    \end{tikzpicture}
        \caption{In this example, $n = 7$. The black line is the hyperbola $R(x,y) = 3.6$.}
    \label{fig:nondiffable_case_2}
\end{figure}
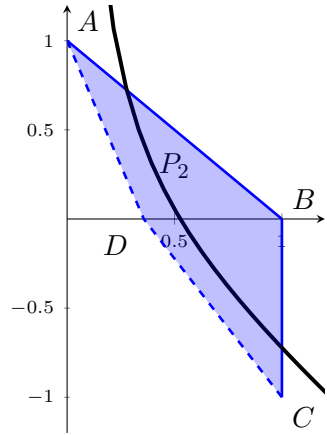

The hyperbola enters the region from the right and moves to the left as $t \to \infty$, so the hyperbola always enters the region at the point $B$. Then \[R(x,y) = \frac{1}{x(x+y)} = t\] intersects $B=(1,0)$ at $t = 1$, which does not give us a new possible non-differentiability point (since we have already accounted for $t = 1$ in Case 1). 

Since the hyperbola $R(x,y)=t$ is asymptotic to the lines $x=0$ and $y=-x$, we can see that it will never cross the points $A$ and $C$, but it will cross the lines $AD$ and $CD$ inside the region $P_2$.
Solving for the first intersection of $R(x,y) = t$ and the line $AD$ given by $xh_1 + y = 1$ gives us a quadratic equation with discriminant $t^2 - 8\cos(\pi/n)t$. This means that \[t = 8\cos(\pi/n)\] gives us the first intersection between the hyperbola and $AD$. Similarly, solving for the intersection between $R(x,y) = t$ and the line $CD$ given by $xh_1+yv_1=1$ also gives us a quadratic equation with discriminant $t^2 - 8\cos(\pi/n)t$, so the first intersections of the hyperbola with the lines $AD$ and $CD$ happen simultaneously. 

Finally, we can compute that the crossing of $D$ happens at $t = (1 + 2\cos(\pi/n))^2$. 
Thus, this case gives us at most \textbf{two} new boundary crossings: the 
simultaneous crossing of $AD$ and $DC$ and the crossing of $D$.
    
    \medskip
    
    \textbf{Case 3.} We now count the number of boundary crossings in the regions $P_i$ for $2 < i < n-1.$ Recall that in this case, \[P_i = \{(x, y) \in \Omega_1\ | \ xh_{i - 2} + yv_{i - 2} \leq 1, xh_{i - 1} + yv_{i - 1} > 1, xh_1 + y > 1\}\] with $R(x,y) = \frac{v_{i-2}}{x(h_{i-2}x + v_{i-2}y)}.$ The constraints on $P_i$ in this case give us four sides: the line $xh_{i - 2} + yv_{i - 2} = 1$, the line $xh_{i - 1} + yv_{i - 1} = 1$, the line $xh_1 + y = 1$, and the line $x = 1$. Denote the intersection of $xh_{i - 2} + yv_{i - 2} = 1$ and $xh_1 + y = 1$ as $A$, the intersection of $xh_{i - 2} + yv_{i - 2} = 1$ and $x = 1$ as $B$, the intersection of $xh_{i - 1} + yv_{i - 1} = 1$ and $x = 1$ as $C$, and the intersection of $xh_{i - 1} + yv_{i - 1} = 1$ and $xh_1 + y = 1$ as $D$. Refer to Figure \ref{fig:nondiffable_case_3} for an example of the shape of these $P_i$ and the given labeling.
    
    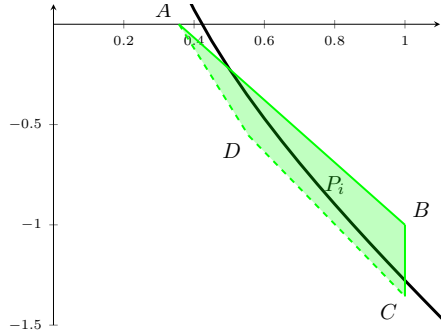
\begin{figure}[h!]
    \centering
    \begin{tikzpicture}[scale=0.75]
    \begin{axis}[
       axis lines=middle,
    ymajorgrids=false,
    xmajorgrids=false,
    xmin=0,
    xmax=1.1,
    ymin=-1.5,
    ymax=0.1,
    xscale=1,
    yscale=1]
    
    \coordinate[label=above left:{$A$}] (C) at (0.357,0);
    \coordinate[label=below left:{$D$}] (D) at (0.555,-0.555);
    \coordinate[label=below left:{$C$}] (K) at (1, -1.3569);
    \coordinate[label=above right:{$B$}] (L) at (1, -1);
    \addplot[domain=0.3:1.5, black, line width = 1.5pt] {1/(3.6*x) - 1.555*x};
    \node[draw=none] at (0.8, -0.8) (label) {$P_i$};
    \draw [line width=1pt, color=green, dashed] (C)--(D);
    \draw [line width=1pt, color=green] (K)--(L);
    \draw[line width=1pt, color=green] (C)--(L);
     \draw[line width=1pt, color=green, dashed] (D)--(K);
     
     \path[fill=green, opacity=0.25] (C) -- (L) -- (K) -- (D);
    \end{axis}
    \end{tikzpicture}
        \caption{In this example, $n = 7$ and $i = 3$. The black curve is the hyperbola $R(x,y) = 3.6$.}
    \label{fig:nondiffable_case_3}
\end{figure}
    
    The hyperbola enters the region at point $B$, which has coordinates $(1, \frac{1 - h_{i-2}}{v_{i-2}})$. We can solve to see that this happens at 
    \begin{align*}
        t = k(i,n) := v_{i-2}
        = \csc\left(\frac{\pi}{n}\right)\sin\left(\frac{\pi(i-1)}{n}\right).
    \end{align*}
    However, note that because $v_{n-i} = v_{i-2}$, the values of the $v_i$ repeat after $\floor{n/2}$ indices. Since we start at $t=v_1$ when $i=3$, this means that only the first $\floor{n/2}-1$ of the $n-4$ regions where $2<i<n-1$ can have distinct values for the time when $R(x,y)=t$ crosses $B$. Then for the remaining $n-\floor{n/2}-3$ regions we do not need to count an additional point of non-differentiability for the crossing of $B$ because we have already taken those times into account.
    
    Solving for the intersection of $R(x,y)=t$ and $AD$ gives a quadratic with discriminant $t^2 - 4t(h_1 - h_{i-2}/v_{i-2}),$ i.e., the first intersection happens at $t = 4(h_1 - h_{i-2}/v_{i-2})$. Moreover, solving for the intersection of $R(x,y)=t$ and $CD$ gives a quadratic with discriminant $t^2/v_{i-1}^2 - 4t(h_{i-1}/v_{i-1} - h_{i-2}/v_{i-2}),$ i.e., the first intersection happens at $t = 4v_{i-1}^2(h_{i-1}/v_{i-1} - h_{i-2}/v_{i-2})$. We may then verify that both of these quantities are equal to \[t= l(i,n) := 4\sin(i\pi/n)\csc(\pi(i - 1)/n),\] meaning that the hyperbola crosses $AD$ and $CD$ at the same time. 
    
    Observe, however, that the crossing of the lines $AD$ and $CD$ only counts as a boundary crossing for the region $P_i$ if it happens $\textit{inside}$ the region i.e. if it occurs after the hyperbola has already crossed $B$. This is equivalent to the condition that $k(i,n)<l(i,n)$. For small $n$, this is indeed the case for all $2<i<n-1$. However for large $n$, we find that $l(i,n)<k(i,n)$ for all but a small number of $2<i<n-1$.  In fact, one can show that $l(i,n)<k(i,n)$ for $7\leq i\leq n-2$ for all $n\geq 13$. We will take this into consideration when making our final count of boundary crossings arising from these regions.
    
    We now note that since $A$ is the intersection of $xh_{i - 2} + yv_{i - 2} = 1$ and $xh_1 + y = 1$, it has coordinates \[\left(\frac{v_{i-2}-1}{h_{i-2}-h_1v_{i-2}}, 1 - \frac{h_1(v_{i-2}-1)}{h_{i-2}-h_1v_{i-2}}\right).\] Similarly, we find that $C$ has coordinates \[\left(1, \frac{1}{v_{i-1}} - \frac{h_{i-1}}{v_{i-1}}\right).\] We may then compute that $R(x,y)=t$ crosses $A$ and $C$ simultaneously at \[t = m(i,n) := \frac{\csc^2(\pi/n)\sin((i-1)\pi/n)}{\cot(\pi/n) - \cot(i\pi/(2n))}.\]
    
    

    We can similarly solve to see that the hyperbola crosses point $D$ at \[t = r(i,n) := \frac{\csc(\pi/n)\sin(\pi(i-1)/n)\sin^2(\pi(i+1)/n)}{(\sin(i\pi/n)-\sin(\pi/n))^2}.\]
    
    Thus for $n\geq 13$ and $3\leq i \leq 6$, we have at most \textbf{four} total new crossings: a crossing at $B$, a simultaneous crossing of $AD$ and $CD$, a simultaneous crossing of $A$ and $C$, and a crossing of $D$. For $6\leq i \leq \floor{n/2}+1$, we have at most \textbf{three} new crossings, since the crossing of the lines $AD$ and $CD$ happens before the hyperbola reaches the region $P_i$. Finally, for $\floor{n/2}+2\leq i<n-1$, we have at most \textbf{two} new crossings, since we do not count the crossing at $B$.
    
    Although we have assumed here that $n\geq 13$, we may check directly that the resulting upper bound we obtain applies even when $4\leq n <13$. The true number of non-differentiable points for $4\leq n <12$ (calculated using numerical methods in Mathematica) is given in the table in Figure \ref{fig:tableofnondiffablepoints}, along with our upper bound. Unfortunately, calculating the true number of non-differentiable points for $n\geq 12$ exceeds our computational capacity.  However, for $n =12$ we may verify that the number of distinct time stamps corresponding to boundary crossings is $29$, which is less than our upper bound.  Since the number of non-differentiable points is bounded above by the number of boundary crossings, this is enough to show that our upper bound holds in all cases $n\geq 4$. Formulas for the time stamps corresponding to boundary crossings for each region $P_i$ for a given $n$ can be found in the table in Figure \ref{stamps}.

    
    \medskip

    \textbf{Case 4.} We now count the number of boundary crossings in $P_{n-1}$. Our region is \[P_{n-1} = \{(x, y) \in \Omega_1\  |\  xh_{2} + yv_{1} \leq 1, xh_1 + y > 1\}\] with return time given by \[R(x,y) = \frac{v_{1}}{x(h_{2}x + v_{1}y)}.\] Our region is bounded by three lines: $xh_2 + yv_1 = 1$, $xh_1 + y = 1$, and $x = 1$. Label the intersection of $xh_2+yv_1 = 1$ and $xh_1 + y=1$ as $A$, the intersection of $xh_2+yv_1=1$ and $x=1$ as $B$, and the intersection of $xh_1 + y = 1$ and $x = 1$ as $C$. Refer to Figure \ref{fig:nondiffable_case_6} for an example of the shape of this region and the given labeling.

    \begin{figure}[h!]
    \centering
    \begin{tikzpicture}
    \begin{axis}[
    axis lines=middle,
    ymajorgrids=false,
    xmajorgrids=false,
    xmin=0.6,
    xmax=1.1,
    ymin=-1.9,
    ymax=-1.2,
    xscale=1,
    yscale=1,
    xtick={0.7, 0.85, 1}]
    \node[draw=none] at (0.95,-1.4) (label) {$P_{n-1}$};
    \coordinate[label=left:{$A$}] (F) at (0.802, -1.247);
    \coordinate[label=right:{$C$}] (G) at (1, -1.802);
    \coordinate[label=right:{$B$}] (I) at (1, -1.692);

    \draw [dashed, line width=1pt, color=orange] (F)--(G);
    \draw [line width=1pt, color=orange] (F)--(I);
    \draw [line width=1pt, color=orange] (G)--(I);
    \addplot[line width=1.5pt, color = black, domain=0.83:1] {1/(2*x)-2.247*x};
     \path[fill=orange, opacity=0.5] (F) -- (I) -- (G);
    \end{axis}
    \end{tikzpicture}
        \caption{This is the example of $n = 7$. The black curve is the equation $R(x,y) = 2$.}
    \label{fig:nondiffable_case_6}
\end{figure}
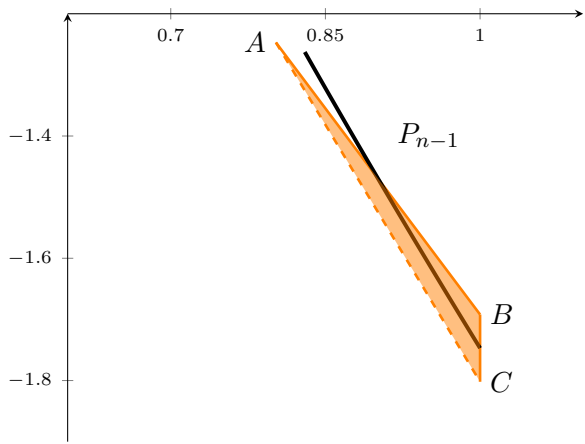
The hyperbola first enters the region at $B$. We find that this happens at $t = v_1$, which has already been accounted for in Case 3. Moreover, the same calculation we used in Case 3 to show that the hyperbola passes through $A$ and $C$ at the same time still applies in this case. 
We may then calculate that the hyperbola passes through the line $AC$ at $t=2\sec(\pi/n)$.

Thus, this case gives us at most \textbf{two} new boundary crossings: a crossing of the line $AC$ and the simultaneous crossing of $A$ and $C$.
    
    \medskip
    \textbf{Case 5.} We finally count the number of boundary crossings in $P_n$. Recall that \[P_n = \{(x, y) \in \Omega_1\ |\ xh_1 + y \leq 1\}\] with $R(x,y) = \frac{v_0}{x(h_1x + v_0y)} = \frac{1}{x(h_1x + y)}.$ Thus, our region has three boundaries: the line $xh_1 + y = 1$, the line $x = 1$, and the line $2(1+\cos(\pi/n))x + y = 1$. Figure \ref{fig:P_n} gives an example of what this region looks like.
    \begin{figure}[h!]
    \centering
    \begin{tikzpicture}[scale=0.4]
    \begin{axis}[
    axis lines=middle,
    ymajorgrids=false,
    xmajorgrids=false,
    xmin=0,
    xmax=1,
    ymin=-3,
    ymax=1.1,
    xscale=1,
    yscale=2]
    
    \addplot[line width = 1.5pt, black, domain=0.2:1] {1/(x*3) - 2.80194*x};
    
    \coordinate (A) at (0,1);
    \coordinate (G) at (1, -1.802);
    \coordinate (H) at (1, -2.802);

    \draw [line width=1pt, color=red] (A)--(G);
    \draw [line width=1pt, color=red, dashed] (A)--(H);
    \draw [line width=1pt, color=red] (G)--(H);
    
     \path[fill=red, opacity=0.25] (A)--(G)--(H);
    \end{axis}
    \end{tikzpicture}
        \caption{The region $P_n$ with $n = 7$ with the graph of the equation $R(x,y) = 3$.}
    \label{fig:P_n}
\end{figure}
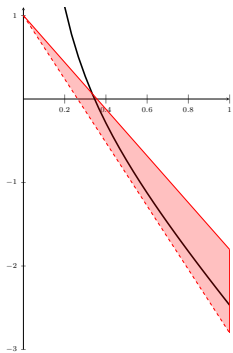
    
We find that the three corners of the region are $(1,1-h_1)$, $(0,1)$ and $(1, 1 - 2(1 + \cos(\pi/n))$. We can easily compute to see that the hyperbola enters the region by passing through $(1, 1-h_1)$ first at $t = 1$, which does not contribute a new time. 
Moreover, we can verify that the hyperbola does not pass through $(0,1)$ and $(1, 1 - 2(1 + \cos(\pi/n))$ because substituting these points into $R(x,y)$ yields an undefined value.
Finally, we may then solve to see that the hyperbola passes through the line $2(1 + \cos(\pi/n))x + y = 1$ at $t=4$, which also does not contribute a new point of non-differentiability. 

Thus, in this case we have \textbf{no} new boundary crossings.

\medskip

\textbf{Finishing Touches.} We now add up all the boundary crossings we have counted. In Case 1, we found at most $2$ boundary crossings. In Case 2, we found at most $2$ new boundary crossings as well. In Case 3, we found at most $4$ new crossings for each $2<i\leq 6$, of which there are 4, at most $3$ new crossings for
$6< i \leq \floor{n/2}+1$, of which there are $n-\floor{n/2}-5$, and at most $2$ new crossings for $\floor{n/2}+2\leq i <n-1$, of which there are $n-\floor{n/2}-3$.
In Case 4, we found at most $2$ new boundary crossings, and in Case 5 we found no new boundary crossings. Adding these values together, we have our desired upper bound of 
\begin{align*}
    \#\text{(Non-Differentiability Points)} &\leq 2 + 2 + 4\cdot 4+ 3\left(\left\lfloor\frac{n}{2}\right\rfloor-5\right)  +2\left(n - \left\lfloor \frac{n}{2} \right\rfloor - 3\right) + 2 \\
    &\leq 2n+\left\lfloor \frac{n}{2} \right\rfloor+1.
\end{align*}

\textbf{Lower Bound.} In order to compute a lower bound, we must understand the possible ways in which our upper bound has over counted the true number of non-differentiability points. First, we may have counted boundary crossings that do not actually happen. For example, in Case 3 we were somewhat conservative in our estimate for when the crossing of $AD$ and $CD$ happens outside of the region $P_i$, and for larger $n$ the range of $i$ for which $m(i,n)<k(i,n)$ can likely be expanded. Second, we could have over counted times where the boundary crossings of two different regions happen to coincide. We have already accounted for this in a number of cases, but there could be additional cases of simultaneous crossings that we have not taken into account.

If this was the only concern, then it would suffice to show the existence of an infinite family of distinct time stamps of boundary crossings.  However, it is \textit{a priori} possible that two or more simultaneous boundary crossings for different regions could "cancel out" an apparent point of non-differentiability, leading to the final distribution actually being differentiable at that point. Thus to prove our lower bound, we must demonstrate the existence of an infinite family of distinct boundary crossings that do not cancel out with any other boundary crossings coming from different regions.

The time stamps where the boundary crossings occur in each case are summarized in Figure \ref{stamps}.

\begin{figure}[h!]
\begin{center}
\renewcommand{\arraystretch}{1.5}
\begin{tabular}{ |c|c|} 
 \hline
 \textbf{Region} & \textbf{Time Stamps}\\ 
  \hline
 $P_1$& $t=1$, $t=4$ \\ 
  \hline
$P_2$ & $t=1$, $t=8\cos(\pi/n)$, $t=(1+2\cos(\pi/n))^2$
\\ 
  \hline
  $P_i$,\quad $2<i<n-1$
  & $t=k(i,n):=\csc\left( \frac{\pi}{n}\right)\sin\left( \frac{\pi(i-1)}{n}\right)$, \\ 
& 
$t=l(i,n):=4\sin(i\pi/n)\csc(\pi(i - 1)/n)$, \\
  & 
$t=m(i,n):=\frac{\csc^2(\pi/n)\sin(\pi(i-1)/n)}{\cot(\pi/n) - \cot(i\pi/(2n))}$,\\ 
&  
  $t=r(i,n):= \frac{\csc(\pi/n)\sin(\pi(i-1)/n)\sin^2(\pi(i+1)/n)}{(\sin(i\pi/n)-\sin(\pi/n))^2}$
  \\ 
    \hline
    $P_{n-1}$ & $t=\csc(\pi/n)\sin(2\pi/n)$, $t=m(n-1,n)$, and $t=2\sec(\pi/n)$ \\ 
     \hline
     $P_{n}$& $t=1$, $t=4$ \\ 
 \hline
\end{tabular}
\end{center}
\caption{Time stamps for boundary crossings.}  
\label{stamps}
\end{figure}

 
Experimental evidence based on computations in Mathematica suggests that the $\floor{n/2}-1$ distinct times coming from $k(i,n)$ and the $n-4$ times coming from $m(i,n)$ and $r(i,n)$, respectively, are all mutually distinct from each other for large $n$. Since there are only a finite number of remaining time stamps which could potentially cancel out a point of non-differentiability, this suggests a conjectural lower bound for the number of non-differentiability points of order $2n+\floor{n/2}$, which is comparable to the upper bound. 

In what follows we will provide a reasoning for the more modest lower bound. First, we will show that there are a large number of time stamps coming from $m(i,n)$ that are distinct from all time stamps coming from $k(i,n)$. Suppose that there are coinciding time stamps from these functions.  That means that there are integers $3\leq i \leq \floor{n/2} + 1$ and $3\leq j \leq n-2$ such that 
\begin{align*}
k(i,n) &= m(j,n)\\
\csc\left(\frac{\pi}{n}\right)\sin\left(\frac{\pi(i-1)}{n}\right) &= \frac{\csc^2(\pi/n)\sin(\pi(j-1)/n)}{\cot(\pi/n)-\cot(\pi j/(2n))}.
\end{align*}
Solving for $i$, we get
\begin{align*}
i =f(j,n):= 1+\frac{n}{\pi}\arcsin\left(\frac{\csc(\pi/n)\sin(\pi(j-1)/n)}{\cot(\pi/n)-\cot(\pi j/(2n))}\right)
\end{align*}
where we only need to consider the usual branch of arcsine since $3\leq i \leq \floor{n/2}+1$ by assumption. An investigation of the function $f(j,n)$ in Mathematica suggests that for large $n$, the function is quite close to $j+2$ in the first half of its domain and it is close to $n-j+2$ in the second half of its domain. We claim that for any $n\geq 38$ we have
\[
0<f(j,n)-j-2<1
\]
for $5\leq j \leq n/5$ and for $n\geq 5$ we have 
\[
0<f(j,n)-n+j-2<1
\]
for $4n/5 \leq j \leq n-2$.  This implies that there is no integer solution to the equation $i = f(j,n)$ for $j$ in these domains, which implies that the time stamps coming from $m(j,n)$ for these $j$ cannot coincide with any of the time stamps coming from $k(i,n)$.

First consider the inequality $0<f(j,n)-j-2$. Observing that sine is an increasing function in the stated region, we can rearrange this to
\begin{align*}
    \sin\left(\frac{\pi(j+1)}{n}\right) &< \frac{\csc(\pi/n)\sin(\pi(j-1)/n)}{\cot(\pi/n)-\cot(\pi j/(2n))}\\
    &= \frac{\sin(\pi(j-1)/n)\sin(\pi j/(2n))}{\sin(\pi j/(2n))\cos(\pi/n)-\cos(\pi j/(2n))\sin(\pi/n)}\\
    &= \frac{\sin(\pi(j-1)/n)\sin(\pi j/(2n))}{\sin(\pi (j-2)/(2n))}
\end{align*}
where we have used a trigonometric identity to simplify the denominator. Since sine is positive in this region, this is equivalent to 
\begin{align*}
    \sin\left(\frac{\pi(j+1)}{n}\right)\sin\left(\frac{\pi (j-2)}{2n}\right) &< \sin\left(\frac{\pi(j-1)}{n}\right)\sin\left(\frac{\pi j}{2n}\right)\\
    \cos\left(\frac{\pi(j+4)}{2n}\right) - \cos\left(\frac{3\pi j}{2n}\right) &< \cos\left(\frac{\pi(j-2)}{2n}\right) - \cos\left(\frac{\pi(3j-2)}{2n}\right)\\
    \cos\left(\frac{\pi(3j-2)}{2n}\right) - \cos\left(\frac{3\pi j}{2n}\right) &< \cos\left(\frac{\pi(j-2)}{2n}\right) - \cos\left(\frac{\pi(j+4)}{2n}\right)\\
    \sin\left(\frac{\pi(3j-1)}{2n}\right)\sin\left(\frac{\pi}{2n}\right) &< \sin\left(\frac{\pi(j+1)}{2n}\right)\sin\left(\frac{3\pi}{2n}\right).
\end{align*}
Now, using the Taylor series for sine, we may say that
\begin{align*}
    \sin\left(\frac{\pi(3j-1)}{2n}\right)\sin\left(\frac{\pi}{2n}\right) < \frac{\pi(3j-1)}{2n}\cdot\frac{\pi}{2n}
\end{align*}
as well as
\begin{align*}
    \left(\frac{\pi(j+1)}{2n}-\frac{1}{6}\left(\frac{\pi(j+1)}{2n}\right)^3\right)\cdot\left(\frac{3\pi}{2n}-\frac{1}{6}\left(\frac{3\pi}{2n}\right)^3\right) < \sin\left(\frac{\pi(j+1)}{2n}\right)\sin\left(\frac{3\pi}{2n}\right)
\end{align*}
so long as $0< j < \frac{2\sqrt{6}n}{\pi}-1$.  It is then straightforward to check that 
\begin{align*}
    \frac{\pi(3j-1)}{2n}\cdot\frac{\pi}{2n} <
    \left(\frac{\pi(j+1)}{2n}-\frac{1}{6}\left(\frac{\pi(j+1)}{2n}\right)^3\right)\cdot\left(\frac{3\pi}{2n}-\frac{1}{6}\left(\frac{3\pi}{2n}\right)^3\right)
\end{align*}
for $n\geq4$ and $j\geq 3$.

A similar calculation shows that the inequality $f(j,n)-j-2<1$ is equivalent to 
\begin{align*}
    \sin\left(\frac{\pi(j+2)}{2n}\right)\sin\left(\frac{2\pi}{n}\right) &< \sin\left(\frac{3\pi j}{2n}\right) \sin\left(\frac{\pi}{n}\right).
\end{align*}
Again using the Taylor series approximations, we know that the above holds whenever
\begin{align*}
    \frac{\pi(j+2)}{2n}\cdot\frac{2\pi}{n} <
    \left(\frac{3\pi j}{2n}-\frac{1}{6} \left(\frac{3\pi j)}{2n}\right)^3\right)\cdot\left(\frac{\pi}{n}-\frac{1}{6}\left(\frac{\pi}{n}\right)^3\right)
\end{align*}
which is true for $5\leq j \leq n/5$ when $n\geq 38$. 

The inequality $0<f(j,n)-n+j-2$ can be reduced to the inequality
\begin{align*}
    \sin\left(\frac{\pi(j-2)}{2n}\right) < \sin\left(\frac{\pi j}{2n}\right) 
\end{align*}
which is true since sine is an increasing function for $4n/5 \leq j \leq n-2$. Similarly, the inequality $f(j,n)-n+j-2<1$ is equivalent to 
\begin{align*}
    \cos\left(\frac{\pi(3j-6)}{2n}\right) < \cos\left(\frac{\pi(3j-2)}{2n}\right) 
\end{align*}
which is again true since cosine is increasing for $j$ in the stated region, so long as $n\geq 5$. This demonstrates the above claim. For $4\leq n <38$, we can verify by direct computation that $k(i,n)\neq m(j,n)$ for any $3\leq i \leq\floor{n/2}+1$ and $5\leq j \leq n/5$ or $4n/5 \leq j \leq n-2$.

This means that only time stamps coming from $m(j,n)$ for $j=3,4$ or $n/5<j<4n/5$ could possibly coincide with any of the time stamps coming from $k(i,n)$. Since $k(i,n)$ is a sinusoid with maximum at $\floor{n/2}+1$, we note that $k(i,n)\leq k(j,n)$ for any $3\leq i \leq n/5$ and $n/5 \leq j\leq 4n/5$. Moreover, since $k(i,n)$ represents the time at which the hyperbola for region $P_i$ crosses the point $B$ and $m(i,n)$ represents the time at which it crosses the points $A$ and $C$, we may observe that $k(i,n)<m(i,n)$ for any $i$ and $n$. Combining these bounds, we see that
\[
k(i,n)\leq k(j,n) < m(j,n)
\]
for any $3\leq i \leq n/5$ and $n/5 \leq j\leq 4n/5$. This means that for $3\leq i \leq n/5$, the function $k(i,n)$ (which is strictly increasing in this region) could only possibly coincide with either $m(3,n)$ or $m(4,n)$. In other words, there are at least $n/5 - 4$ values of $k$ that do not coincide with \textit{any} value of $m$. We also note that since the hyperbola always crosses the point $B$ for each region $P_i$, all of these times represent true boundary crossings.

To finish the proof of our lower bound, we now observe that in order for two simultaneous boundary crossings to "cancel out" an apparent point of non-differentiability, they would have to have opposing effects on the derivative of the the probability distribution, i.e. one would have to represent a sudden \textit{increase} in the rate of area accumulation as the hyperbola sweeps through its corresponding transversal region, while the other would have to represent a sudden \textit{decrease} in the rate area accumulation for its region. Recalling their definitions in Case 3, we find that the functions $k(i,n)$ and $r(i,n)$ represent an \textit{increase} in the rate of area accumulation, while $l(i,n)$ and $m(i,n)$ represent a \textit{decrease} in the rate of area accumulation for the region $P_i$. Recall also that for $n\geq 13$, the function $l(i,n)$ represents at most $4$ true boundary crossings, as the rest occur outside of the transversal region. We furthermore find that there are a total of $3$ boundary crossings of \textit{decreasing} type coming from Cases 1, 2, 4, and 5. 

Thus, the $n/5 -4$ (\textit{increasing}-type) values of $k(i,n)$ which we have already argued are distinct from any of the values of $m(i,n)$ can only possibly cancel with finitely many \textit{decreasing}-type values coming from other regions. Assuming that all possible cancellations happen, we get our lower bound of 
\begin{align*}
    \#\text{(Non-Differentiability Points)} &\geq \left(\frac{n}{5} - 4\right) - 4 -3=  \frac{n}{5}-11.
\end{align*}

Combining these two bounds, we conclude:
\[\frac{n}{5}-11 \leq \#\text{(Non-Differentiability Points)} \leq  2n+ \left \lfloor \frac{n}{2} \right \rfloor +1.\]
\end{proof}

The plot in Figure \ref{fig:plotofnondiffablepoints} and the table in Figure \ref{fig:tableofnondiffablepoints} show our upper bound against the actual number of non-differentiable points on the distribution (computed using numerical methods in Mathematica). We can see that our bound is a reasonable estimate for the number of non-differentiable points and appears to grow at roughly the same rate. In any case, we have shown that there is a linear upper and lower bound on the number of non-differentiable points for the slope gap distribution of the $2n$-gon.

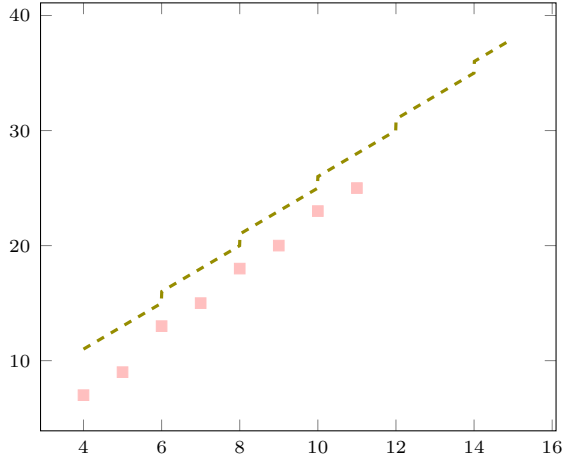
\begin{figure}
    \centering
    \begin{tikzpicture}
\begin{axis}[%
scatter/classes={%
         a={mark=*,cyan},%
		b={mark=square*,pink}}, xscale=1, yscale=1]
\addplot[scatter,only marks,%
    scatter src=explicit symbolic]%
table[meta=label] {
x y label
4 7 b
5 9 b
6 13 b
7 15 b
8 18 b
9 20 b
10 23 b
11 25 b
    };
\addplot [
    domain=4:15,
    samples=1000,
    dashed, very thick, olive
] {floor(x/2)+2*x +1};
\end{axis}
\end{tikzpicture}
    \caption{Plot of the upper bound from Theorem \ref{linear_upper_bound} (olive dashed line) and the actual number of non-differentiable points (pink squares) against $n$.}
    \label{fig:plotofnondiffablepoints}
\end{figure}


\begin{center}
\begin{figure}
\begin{tabular}{ |c|c|c| } 
 \hline
 $n$ & Upper Bound & \# of Non-Differentiable Points \\
 \hline
 \hline
 4 & 11 & 7\\
 \hline
 5 & 13 & 9\\ 
 \hline
 6 & 16 & 13\\ 
 \hline
 7 & 18 & 15 \\ 
 \hline
 8 & 21 & 18 \\
 \hline
 9 & 23 & 20 \\ 
 \hline
 10 & 26 & 23 \\ 
 \hline
 11 & 28 & 25 \\ 
 \hline
 12 & 31 & -- \\
 \hline
 13 & 33 & -- \\
 \hline
 14 & 36 & -- \\
 \hline
 15 & 38 & --\\
 \hline
\end{tabular}
    \caption{Table of the upper bound from Theorem \ref{linear_upper_bound} and number of non-differentiable points for small $n$.}
    \label{fig:tableofnondiffablepoints}
\end{figure}
\end{center}

\begin{appendix}

\section{Return Time Function Computations}\label{BigPropAppendix}

\begin{proof}[Proof of Proposition \ref{big prop}]

We begin with an outline of how the proof proceeds.
We must show that at every point in $\Omega_1$, the winning vector is the one we assert it to be in Proposition \ref{big prop}; the winning vector will dictate the return time at that point.
We begin by finding the set of all saddle connection holonomy vectors which win at some $(x, y) \in \Omega_1$.
By Lemma \ref{check_on_x=1}, this is exactly the set of vectors which win at points $(1, y) \in \Omega_1$, meaning that it suffices to find the winning vectors for points along the right edge ($x = 1$) of $\Omega_1$.
We claim that the vectors specified in Proposition \ref{big prop} win at these points (i.e. $\lambda_i$ wins at points $(1, y) \in P_i$).
To prove this, we establish conditions which any other vector must satisfy in order to win over our proposed vector in each region $P_i$, then we prove by exhaustive casework that for each $P_i$ there is no saddle connection holonomy vector satisfying these conditions.
This proves that our candidate vectors (those specified in Proposition \ref{big prop}) are the only vectors which win at points $(1, y) \in \Omega_1$, and hence at any point in $\Omega_1$.
Finally, we use the definition of a winning vector to describe the regions where each member of this finite set of vectors (the $\lambda_i$ vectors) wins, fully determining the return time function on $\Omega_1$.

\vspace{10pt}

We will first establish some useful lemmas, then we will address the 4 cases provided in the lemma separately.

Take an arbitrary saddle connection holonomy vector $\cvec{a}{b}$ and an arbitrary point $(x,y) \in \Omega_1$. First notice that the three restrictions $ax + by > 0$, $\text{slope}(M_{x, y}v) > 0$, and $x > 0$ together necessitate that $b > 0$, so to find the winning vector at $(x,y)$ (i.e., the vector with smallest slope on $M_{x,y} \mathcal{S}$), we need only consider saddle connections whose holonomy vectors have positive horizontal component.

Second, we make the observation that $M_{x,y}$ preserves the ordering of slopes. Intuitively, this follows because $M_{x,y}$ has positive determinant (so it is orientation preserving), and the image under $M_{x,y}$ of any vector with positive $x$-coordinate also has positive $x$-coordinate. Formally, for $v = \cvec{a}{b}$ and $w = \cvec{c}{d}$, if $\text{slope}(v) \leq \text{slope}(w)$, then we have:
\begin{align*}\text{slope}(M_{x, y}v) - \text{slope}(M_{x, y}w) &= \frac{bx^{-1}}{ax + by} - \frac{dx^{-1}}{cx + dy} \\[10pt] &=
\frac{bc + bdyx^{-1} - ad - bdyx^{-1}}{(ax + by)(cx + dy)} \\[10pt] &= \frac{bc - ad}{(ax + by)(cx + dy)} \\[10pt] &=
 \frac{ac(b/a - d/c)}{(ax + by)(cx + dy)} \leq 0\end{align*}
Thus, $\text{slope}(v) \leq \text{slope}(w)$ implies $\text{slope}(M_{x, y}v) \leq \text{slope}(M_{x, y}w)$, so to determine if $w$ wins over $v$, we can directly compare slopes. With these observations in mind, we proceed to our lemmas.

%
%
%
%

\begin{lem}\label{check_on_x=1}
    The set of saddle connections which win at some $(x, y) \in \Omega_1$ is exactly the set of saddle connections which win at some $(1, y) \in \Omega_1$.
\end{lem}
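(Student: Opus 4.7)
The inclusion $\supseteq$ is immediate since $\{x=1\}\cap\Omega_1\subset\Omega_1$. For the reverse inclusion, suppose $v=\cvec{a}{b}$ wins at some $(x_0,y_0)\in\Omega_1$. My plan is to produce an explicit $y_1$ with $(1,y_1)\in\Omega_1$ and $v$ winning there. The starting observation is the slope-preservation property of $M_{x,y}$ recorded just before the lemma: $v$ wins at $(x_0,y_0)$ exactly when $v$ is relevant there (i.e.\ $b>0$ and $0<ax_0+by_0\leq 1$) and $b/a$ is minimal among slopes on $\mathcal{S}$ of all saddle connections relevant at $(x_0,y_0)$.

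Next I would record two structural constraints on any winner $v$:
\begin{itemize}
\item[(i)] $a\geq 0$. Indeed, $\cvec{0}{1}$ is relevant everywhere in $\Omega_1$ (its horizontal component at $(x,y)$ is $y\in(0,1]$), and a direct slope comparison shows $\cvec{0}{1}$ beats every $\cvec{a}{b}$ with $a<0$.
\item[(ii)] $a/b\leq 2(1+\cos(\pi/n))$. On the staircase $\mathcal{S}$ any saddle connection with positive vertical component has $b\geq v_0=1$; combined with the defining inequality $y>1-2(1+\cos(\pi/n))x$ of $\Omega_1$, this gives $ax+by>(a-2b(1+\cos(\pi/n)))x+b$, which is strictly larger than $b\geq 1$ as soon as $a/b>2(1+\cos(\pi/n))$, contradicting relevance of $v$ anywhere in $\Omega_1$.
\end{itemize}

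With these in hand I would set $y_1:=y_0-\frac{a(1-x_0)}{b}$, chosen so that $a+by_1=ax_0+by_0\in(0,1]$, making $v$ relevant at $(1,y_1)$. The point $(1,y_1)$ lies in $\Omega_1$: the upper bound $y_1\leq y_0\leq 1$ uses (i) and $x_0\leq 1$, while the lower bound
\[y_1\geq y_0-2(1+\cos(\pi/n))(1-x_0)>\bigl(1-2(1+\cos(\pi/n))x_0\bigr)-2(1+\cos(\pi/n))(1-x_0)=1-2(1+\cos(\pi/n))\]
uses (ii) together with the bottom-edge inequality for $(x_0,y_0)$. To close the argument, for any saddle connection $w=\cvec{c}{d}$ with slope strictly smaller than $v$ on $\mathcal{S}$ (equivalently $bc>ad$, and necessarily $c>0$ by the analogue of (i)), the identity
\[c+dy_1=(cx_0+dy_0)+(1-x_0)\bigl(c-ad/b\bigr)\geq cx_0+dy_0>1\]
shows that $w$, which was irrelevant at $(x_0,y_0)$ because $v$ won there, remains irrelevant at $(1,y_1)$. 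Hence no competitor beats $v$ at $(1,y_1)$, and $v$ wins there.

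The main obstacle is the structural constraint (ii): its proof rests on the lower bound $b\geq 1$ for vertical components of saddle connections on $\mathcal{S}$, i.e.\ that $v_0=1$ is the smallest positive vertical separation of cone points. Once (ii) is in place the construction of $y_1$ and the competitor check are routine one-line calculations; without it, a winner with $a/b>2(1+\cos(\pi/n))$ could in principle push $y_1$ below the bottom edge of $\Omega_1$, forcing a Veech-group adjustment that would replace $v$ by a translate $S_1^k v$ and complicate the statement.
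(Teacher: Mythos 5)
Your argument is essentially the paper's proof of this lemma: you slide $(x_0,y_0)$ along the line of slope $-a/b$ out to the edge $x=1$, verify the landing point stays in $\Omega_1$ using $a\ge 0$ and the bound $a/b\le 2(1+\cos(\pi/n))$ (the paper derives exactly these two facts from $b\ge 1$ and the boundary inequalities), note that the winner's image keeps the same horizontal component $ax_0+by_0$, and observe that every shallower competitor's horizontal component can only increase, so whatever was excluded before remains excluded. One small correction: in justifying (i) you claim $\cvec{0}{1}$ is relevant everywhere in $\Omega_1$ because ``$y\in(0,1]$,'' but $\Omega_1$ contains points with $y\le 0$ (its lower boundary $y=1-2(1+\cos(\pi/n))x$ dips below $0$ for $x$ near $1$); the conclusion $a\ge 0$ still holds because any relevant vector with $a<0$ and $b>0$ forces $by_0>-ax_0>0$, hence $y_0>0$, so the comparison with $\cvec{0}{1}$ is available at the particular point where such a vector would have to win.
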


%
%
%
%

\begin{proof}
    Suppose the saddle connection with holonomy vector $ v_0 = \begin{pmatrix}a_0 \\ b_0 \end{pmatrix}$ is the saddle connection whose image under some $K = M_{x_0,y_0}$ with $(x_0,y_0) \in \Omega_1$ has the smallest positive slope of any image of a saddle connection with horizontal length at most $1$.
    We claim $v_0$ is also the vector satisfying this property for the matrix
    \[K' = M_{1, y_0 - (1 - x_0)a_0/b_0} = \begin{pmatrix}1 & y_0 - (1 - x_0)a_0/b_0 \\ 0 & 1 \end{pmatrix}.\] Observe that the matrix $K'$ corresponds to the point where the line with slope $a_0/b_0$ passing through the point $(x_0,y_0)$ intersects the line $x=1$. In other words, we will show that $v_0$ must win at this intersection point as well. We first provide a high-level outline of our argument for why this claim should be true.  Since $v_0$ wins at $(x_0,y_0)$, we know that $Kv_0$ has horizontal component with length at most $1$. Then, the point $(1, y_0 - (1 - x_0)a_0/b_0)$ has the property that $K'v_0$ has horizontal component with length at most $1$ as well, so $v_0$ is a valid candidate at this new point. Any vector $v$ winning over $v_0$ at $(1, y_0 - (1 - x_0)a_0/b_0)$ must have greater slope than $v_0$, so a straightforward computation shows that $K'v$ has a longer horizontal component than $Kv$. Since $v$ cannot win at $(x_0,y_0)$, this means that $Kv$ has horizontal component longer than $1$, contradicting the fact that the horizontal component of $K'v$ is at most $1$.
    
    We proceed to our complete argument. First we must show that $(1, y_0 - (1 - x_0)a_0/b_0) \in \Omega_1$. In order for $v_0$ to be a candidate winning vector at $(x_0,y_0)$, it must satisfy the condition that $M_{x_0,y_0}v_0$ has $x$-coordinate between $0$ and $1$, hence it must be true that $a_0x_0 + b_0y_0 \leq 1$.
    We also know from the boundaries of $\Omega_1$ that $y_0 > -2(1 + \cos(\pi/n))x_0 + 1$, and we know that because the smallest vertical edge in $S'$ is $1$, $b_0 \geq 1$.
    Using these inequalities, we find:
    \begin{align*}a_0x_0 -2(1 + \cos(\pi/n))b_0x_0  &\leq a_0x_0 -2(1 + \cos(\pi/n))b_0x_0 + b_0 - 1  \\[10pt] &< a_0x_0 + b_0y_0 - 1 \leq 0, \end{align*} and \[a_0/b_0 > -2(1 + \cos(\pi/n)).\]
    We also know $a_0/b_0$ is positive, so $-a_0/b_0 < 0$.
    Thus, $-a_0/b_0$ is between the slopes of the lower and upper boundaries of $\Omega_1$, so the line of slope $-a_0/b_0$ passing through $(x_0, y_0)$ will intersect the right edge of $\Omega_1$ (which is included in $\Omega_1$) at $x = 1$; this point is exactly $(1, y_0 - (1 - x_0)a_0/b_0)$.
    
    Now we show that $K'v_0$ is the saddle connection of horizontal length at most $1$ with the smallest positive slope.
    Since the image of $v_0$ under $K$ has horizontal length at most 1, we know
    \[x_0a_0 + y_0b_0 \leq 1\]
    Well, the same must be true for the horizontal length of the image of $v_0$ under $K'$. Henceforth we will denote the horizontal component of $Kv_0$ as $c$, and we will denote the horizontal component of $K'v_0$ as $c'$.
    \[c' = 1\cdot a_0 + (y_0 - (1 - x_0)a_0/b_0)\cdot b_0 = x_0a_0 + y_0b_0 \leq 1,\] so $K'v_0$ has horizontal length at most $1$ as well (making it a valid candidate).
    Now let us consider the set of vectors $v$ whose image under $K'$ may have a smaller positive slope than $K'v_0$.
    Both $K$ and $K'$ are shear matrices which preserve the order of slopes of vectors, so $v$ must have smaller slope than $v_0$, and $Kv$ must have smaller slope than $Kv_0$.
    In other words, for $v = \cvec{a}{b}$,
    it must be true that
    \[b/a < b_0/a_0 \implies a_0b/b_0 < a\]
    Let $d := ax_0 + by_0$ denote the horizontal component of $Kv$. Now consider the horizontal component of $K'v$ (which we will denote by $d'$):
    \begin{align*}d' &= 1\cdot a + (y_0 - (1 - x_0)a_0/b_0)\cdot b \\[10pt] &= (a - a_0b/b_0)(1 - x_0) + ax_0 + by_0\\[10pt] &= |a - a_0b/b_0|\cdot|1 - x_0| + ax_0 + by_0 \\[10pt] &> ax_0 + by_0 = d\end{align*}
    Well, since $v_0$ is the saddle connection whose image under $K$ has the smallest positive slope of any image of a saddle connection with horizontal length at most 1, and since $Kv$ has smaller slope than $Kv_0$, it must be true that the horizontal component of $Kv$ is greater than 1, so
    \[d' > d > 1,\] contradicting the fact that the horizontal length of $K'v_0$ must be at most $1$.
    Thus there is no vector $v$ such that $K'v$ has smaller slope than $K'v_0$ and $K'v$ has horizontal component at most $1$.
    In other words, if $v_0$ is the saddle connection whose image under some $K = M_{x_0,y_0}$ with $(x_0,y_0) \in \Omega_1$ has the smallest positive slope of any image of a saddle connection with horizontal length at most 1, then $v_0$ also satisfies this property for some $K' = M_{1, y}$ with $(1, y) \in \Omega_1$.
\end{proof}

For the sake of brevity, we introduce some new notation.
Let $\sigma_i$ for $1 \leq i \leq \lceil n/2 \rceil - 1$ represent the saddle connections joining the lower left and upper right vertices on the rectangles $H_i$ and $\nu_i$ for $0 \leq i \leq \lfloor n/2 \rfloor - 1$ represent the saddle connections joining the lower left and upper right vertices on on the rectangles $V_i$, as shown in the diagram in Figure \ref{fig:sigma_label} for $n = 7$.

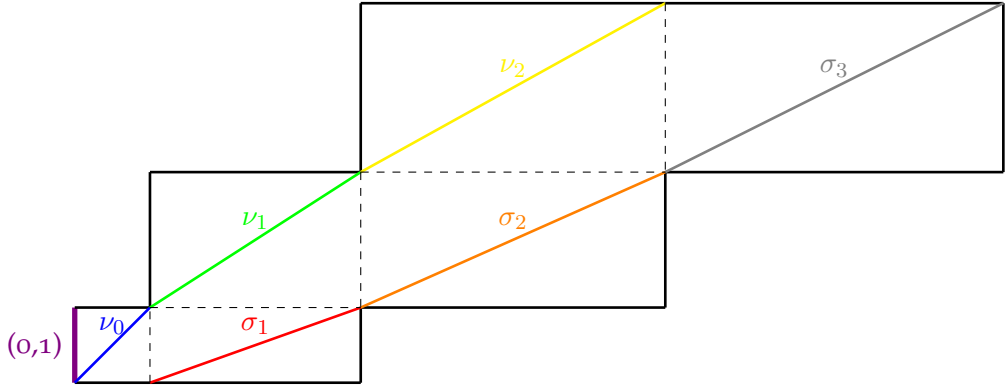
\begin{figure}[h!]
    \centering
        \begin{tikzpicture}
    \coordinate (a) at (0,0);
    \coordinate (b) at (0,1);
    \coordinate (c) at (1,0);
    \coordinate (d) at (1,1);
    \coordinate (e) at (3.801937735804838,0);
    \coordinate (f) at (3.801937735804838,1);
    \coordinate (g) at (1,2.801937735804838);
    \coordinate (h) at (3.801937735804838,2.801937735804838);
    \coordinate (i) at (7.850855075327144,2.801937735804838);
    \coordinate (j) at (7.850855075327144,1);
    \coordinate (k) at (7.850855075327144,5.048917339522305);
    \coordinate (l) at (3.801937735804838,5.048917339522305);
    \coordinate (m) at (12.344814282762078,5.048917339522305);
    \coordinate (n) at (12.344814282762078,2.801937735804838);
    
    \draw [line width=2pt, color=violet] (a) -- (b) node[midway,left] {(0,1)};
    \draw [line width=1pt] (b)-- (d) ;
    \draw [line width=1pt] (a)-- (c);
    \draw [line width=1pt] (c)-- (e);
    \draw [line width=1pt] (e)-- (f);
    \draw [line width=1pt] (f)-- (j);
    \draw [line width=1pt] (j)-- (i);
    \draw [line width=1pt] (i)-- (n);
    \draw [line width=1pt] (n)-- (m);
    \draw [line width=1pt] (m)-- (k);
    \draw [line width=1pt] (k)-- (l);
    \draw [line width=1pt] (l)-- (h);
    \draw [line width=1pt] (h)-- (g);
    \draw [line width=1pt] (g)-- (d);
    \draw [line width=1pt, color=blue] (a) -- (d) node[midway, above] {$\nu_0$};
    \draw [line width=1pt, color=red] (c) -- (f) node[midway,above] {$\sigma_1$};
    \draw [line width=1pt, color=green] (d) -- (h) node[midway, above] {$\nu_1$};
    \draw [line width=1pt, color=orange] (f) -- (i) node[midway, above] {$\sigma_2$};
    \draw [line width=1pt, color=yellow] (h) -- (k) node[midway, above] {$\nu_2$};
    \draw [line width=1pt, color=gray] (i) -- (m) node[midway, above] {$\sigma_3$};
    
    \draw [dashed] (c) -- (d);
    \draw[dashed] (d) -- (f);
    \draw [dashed] (h) -- (f);
    \draw[dashed] (h) -- (i);
    \draw [dashed] (k)-- (i);
    \end{tikzpicture}
    \caption{Labeling of $\mathcal{S}'$ using $\sigma$'s and $\nu$'s for $n = 7$}
    \label{fig:sigma_label}
\end{figure}

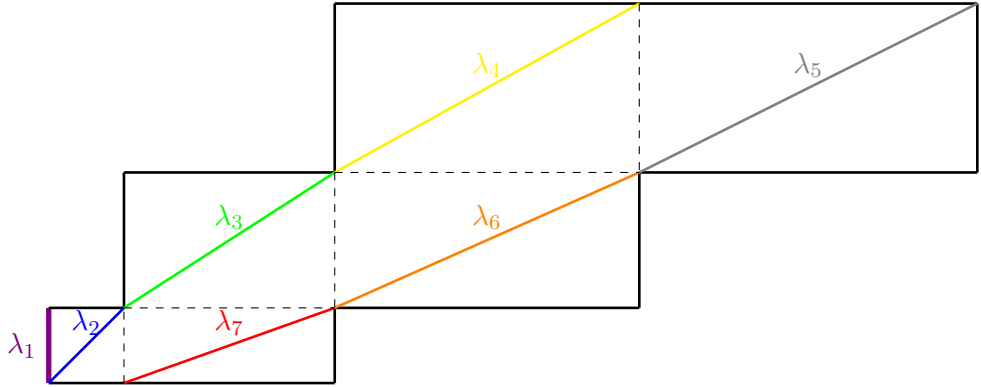
\begin{figure}[h!]
    \centering
        \begin{tikzpicture}
    \coordinate (a) at (0,0);
    \coordinate (b) at (0,1);
    \coordinate (c) at (1,0);
    \coordinate (d) at (1,1);
    \coordinate (e) at (3.801937735804838,0);
    \coordinate (f) at (3.801937735804838,1);
    \coordinate (g) at (1,2.801937735804838);
    \coordinate (h) at (3.801937735804838,2.801937735804838);
    \coordinate (i) at (7.850855075327144,2.801937735804838);
    \coordinate (j) at (7.850855075327144,1);
    \coordinate (k) at (7.850855075327144,5.048917339522305);
    \coordinate (l) at (3.801937735804838,5.048917339522305);
    \coordinate (m) at (12.344814282762078,5.048917339522305);
    \coordinate (n) at (12.344814282762078,2.801937735804838);
    
    \draw [line width=2pt, color=violet] (a) -- (b) node[midway,left] {$\lambda_1$};
    \draw [line width=1pt] (b)-- (d) ;
    \draw [line width=1pt] (a)-- (c);
    \draw [line width=1pt] (c)-- (e);
    \draw [line width=1pt] (e)-- (f);
    \draw [line width=1pt] (f)-- (j);
    \draw [line width=1pt] (j)-- (i);
    \draw [line width=1pt] (i)-- (n);
    \draw [line width=1pt] (n)-- (m);
    \draw [line width=1pt] (m)-- (k);
    \draw [line width=1pt] (k)-- (l);
    \draw [line width=1pt] (l)-- (h);
    \draw [line width=1pt] (h)-- (g);
    \draw [line width=1pt] (g)-- (d);
    \draw [line width=1pt, color=blue] (a) -- (d) node[midway, above] {$\lambda_2$};
    \draw [line width=1pt, color=red] (c) -- (f) node[midway,above] {$\lambda_7$};
    \draw [line width=1pt, color=green] (d) -- (h) node[midway, above] {$\lambda_3$};
    \draw [line width=1pt, color=orange] (f) -- (i) node[midway, above] {$\lambda_6$};
    \draw [line width=1pt, color=yellow] (h) -- (k) node[midway, above] {$\lambda_4$};
    \draw [line width=1pt, color=gray] (i) -- (m) node[midway, above] {$\lambda_5$};
    
    \draw [dashed] (c) -- (d);
    \draw[dashed] (d) -- (f);
    \draw [dashed] (h) -- (f);
    \draw[dashed] (h) -- (i);
    \draw [dashed] (k)-- (i);
    \end{tikzpicture}
    \caption{Labeling of $\mathcal{S}'$ with $\lambda$'s for $n = 7$}
    \label{fig:lambda_label}
\end{figure}

Note that as holonomy vectors, $\sigma_i = \cvec{h_i}{v_{i-1}}$ and $\nu_i = \cvec{h_i}{v_i}$.
Thus, restating Proposition $\ref{big prop}$ in terms of this new notation, our goal is to show that the vector $\cvec{0}{1}$ wins on $P_1$, that $\nu_{i-2}$ wins on $P_i$ for $1 < i \leq \lceil n/2 \rceil$, that $\sigma_{n-i}$ wins on $P_i$ for $\lfloor n/2 \rfloor +1 < i < n$, and that $\cvec{h_1}{v_0}$ wins on $P_n$.
Finally, let $\lambda_i$ denote the vector which we claim wins on $P_i$.

Lemma \ref{check_on_x=1} implies that in order to find the full set of winning vectors for $\Omega_1$, we only need to consider the set of vectors which win at points in the intersection of $\Omega_1$ and the line $x = 1$.
We will first establish that this set of vectors is indeed the vectors $\lambda_i$.
Then, knowing that one of the $\lambda_i$ wins at every point in $\Omega_1$ , we will show that each $\lambda_i$ wins on $P_i$ as compared to the other $\lambda_j$ vectors; since $\cup_i P_i = \Omega_1$, this provides a full description of the return time function on $\Omega_1$.

%
%
%
%

For each $i$, in order to prove that $\lambda_i$ wins on $P_i \cap \{x = 1\}$, we establish four conditions which must be satisfied by any vector $\cvec{a}{b}$ which wins over $\lambda_i$ in the region, and prove that all saddle connections on $\mathcal{S}'$ apart from $\lambda_i$ violate at least one condition.

\begin{itemize}

\item \textbf{Condition 1} is a restriction on the horizontal length of the vector, i.e. there exists some $q > 0$ such that $a < q$ (we use an explicit $q$ which differs between cases).

\item \textbf{Condition 2} is the restriction that $b/a < slope(\lambda_i)$ (otherwise $\cvec{a}{b}$ would not have a smaller slope than $\lambda_i$).

\item Now, for vector $\cvec{a}{b}$, define $f\cvec{a}{b} = \frac{1 - a}{b}$.
Notice that for $(1, y) \in \Omega_1$, in order for the image of $\cvec{a}{b}$ to have horizontal component at most $1$, $y$ must be at most $f\cvec{a}{b}$.
Thus we can invoke another condition, \textbf{Condition 3}, that for a candidate vector $v$, $f(v)$ must be greater than the lower bound for $y$ in the region $P_i \cap \{x = 1\}$, which is simply $f(\lambda_{i + 1})$.
So this condition can be simplified to $a - b \cdot f(\lambda_{i+1}) < 1$.
Notice that this constraint can be rephrased as follows, with $m = b/a$ being the slope of a given vector
\[a ( 1 - m \cdot f(\lambda_{i+1}) ) < 1\]
So, if $v$ and $v'$ are vectors such that $v'$ has a larger horizontal length $a$ and a smaller slope $m$ than $v$, then if $v$ does not satisfy the above constraint, neither will $v'$.

\item \textbf{Condition 4} is the restriction that $a/b < 2 + 2\cos(\pi/n)$ (which is incidentally the aspect ratio of the rectangle formed by merging $H_{i+1}$ and $V_{i}$ for all $i$).
This is because for $(1, y) \in \Omega_1$, $y > 1 - (2 + 2\cos(\pi/n))$, and for any saddle connection $\cvec{a}{b}$ of $S'$, $1/b \leq 1$, so we can rearrange the previously-mentioned constraint $a + by \leq 1$ as follows:
\[1 - a/b \geq 1/b - a/b \geq y > -1 - 2\cos(\pi/n)\]
The inequality $a/b < 2 + 2\cos(\pi/n)$ directly follows from the above inequality.
\end{itemize}

\begin{rem} Conditions 1, 2, and 3 do not refer to anything specific about $\mathcal{S}'$, so these conditions must hold for all Veech surfaces. Of course, the value $q$ in Condition $1$ will depend on the surface; we demonstrate how to explicitly compute $q$ in our casework. On the other hand, an analogue of Condition 4 should hold for all Veech surfaces (after scaling to have $1/b \leq 1$ for all holonomy vectors $\cvec{a}{b}$ on the surface): the constraint $a + by \leq 1$ yields an analogous inequality after substituting in the minimum $y$-coordinate in some region $\Omega_i$ for a general Veech surface.
\end{rem}

\begin{rem}
Since straightforward analogues of the aforementioned four conditions can be applied to narrow down winning vectors for any Veech surface and since the techniques we used in Section \ref{sect:bounds} can be easily adapted once the winning vectors are known, we believe that the methods in this paper can be generalized to compute bounds on the number of non-differentiability points in the slope gap distributions of general Veech surfaces.
\end{rem}

\begin{lem}\label{lem:forder}
    $f(\lambda_i) < f(\lambda_{i + 1})$
\end{lem}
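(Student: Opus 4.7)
The plan is to reduce the claimed monotonicity to a trigonometric inequality by exploiting the explicit formulas for the winning vectors supplied by Proposition \ref{big prop}. First I would unify the description of $\lambda_i$ across the two regimes: using the identities $h_{n-i}=h_{i-1}$ and $v_{n-i}=v_{i-2}$ from Remark \ref{counting_backwards}, the ``small $i$'' winners $\nu_{i-2}=\cvec{h_{i-2}}{v_{i-2}}$ and the ``large $i$'' winners $\sigma_{n+1-i}=\cvec{h_{n+1-i}}{v_{n-i}}$ share the single closed form $\cvec{h_{i-2}}{v_{i-2}}$, so
\[
f(\lambda_i)=\frac{1-h_{i-2}}{v_{i-2}}\qquad\text{for } 2\le i\le n,
\]
while $f(\lambda_1)=1$ fits the same formula under the convention $h_{-1}=0,\ v_{-1}=1$.

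Next, for the bulk indices $2\le i\le n-1$, I would compute the difference
\[
f(\lambda_{i+1})-f(\lambda_i)=\frac{(1-h_{i-1})v_{i-2}-(1-h_{i-2})v_{i-1}}{v_{i-1}\,v_{i-2}},
\]
and observe that since $v_{i-1},v_{i-2}$ are positive staircase side-lengths, the sign reduces to the sign of the numerator $N_i=(v_{i-2}-v_{i-1})+(h_{i-2}v_{i-1}-h_{i-1}v_{i-2})$. Substituting the explicit trigonometric formulas $h_j=\csc(\pi/(2n))\sin((2j+1)\pi/(2n))$ and $v_j=\csc(\pi/n)\sin((j+1)\pi/n)$ into $N_i$ and applying product-to-sum identities should collapse each pair of sine products into differences of cosines of linear combinations of $\pi/(2n)$. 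Combining these with the staircase relations $v_{i-1}+v_i=h_i$ (Remark \ref{vertical_squares}) and $h_i+h_{i+1}=(2+2\cos(\pi/n))v_i$ (Remark \ref{constant_aspect_ratio}) should produce a single factored trigonometric expression whose sign is determined by the ranges of its arguments in $(0,\pi)$.

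Finally, I would verify the boundary cases separately: the base case $i=1$ by directly comparing $f(\lambda_1)=1$ with the explicit value $f(\lambda_2)=(1-h_0)/v_0$, and the endpoint $i=n-1$ using $h_{n-2}=h_1$, $v_{n-2}=1$ from Remark \ref{counting_backwards}. The staircase midpoint transition at $i=\lfloor n/2\rfloor+1$, where the winning vector switches from a $V_i$-diagonal to an $H_j$-diagonal, is automatically subsumed by the unified formula above.

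The main obstacle is the trigonometric simplification of $N_i$: choosing the right product-to-sum decomposition so that the sign is transparent, and carefully tracking index conventions near the midpoint to avoid off-by-one errors when the regime changes. I expect the manipulation to be elementary but delicate, with the staircase identities in Remarks \ref{vertical_squares}--\ref{counting_backwards} doing the essential algebraic work of reducing what looks like a four-term combination of $h$'s and $v$'s to a single sine product.
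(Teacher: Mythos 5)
Your reduction is structurally sound and in fact more unified than the paper's argument: the paper proves the monotonicity in four separate steps (comparing $\cvec{0}{1}$ with $\nu_0$, then $\nu_i$ with $\nu_{i+1}$, then $\nu_i$ with $\sigma_{i+1}$ across the midpoint, then $\sigma_{i+1}$ with $\sigma_i$), each time using exactly the tools you name, namely $h_j=v_{j-1}+v_j$ and the product-to-sum consequence $v_j^2-v_{j-1}v_{j+1}=1$. Your single closed form $\lambda_i=\cvec{h_{i-2}}{v_{i-2}}$ via Remark~\ref{counting_backwards} collapses all four comparisons into one computation. Carrying that computation out, your numerator simplifies to
\[
N_i=(v_{i-2}-v_{i-1})+(h_{i-2}v_{i-1}-h_{i-1}v_{i-2})=(v_{i-2}-v_{i-1})-\bigl(v_{i-2}^2-v_{i-3}v_{i-1}\bigr)=v_{i-2}-v_{i-1}-1=-\frac{2\cos\!\left(\tfrac{\pi i}{2n}\right)\cos\!\left(\tfrac{\pi(i-1)}{2n}\right)}{\cos\!\left(\tfrac{\pi}{2n}\right)}.
\]

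The gap is that this quantity is strictly \emph{negative} for all $2\le i\le n-1$, so the final step where you expect the sign to come out positive will fail. With the definition $f\cvec{a}{b}=\frac{1-a}{b}$ that you adopt from Condition 3, one in fact has $f(\lambda_{i+1})<f(\lambda_i)$, the reverse of the stated inequality; a sanity check is $f(\lambda_1)=1$, $f(\lambda_2)=(1-h_0)/v_0=0$, $f(\lambda_3)=(1-h_1)/v_1<0$, which must decrease because $f(\lambda_{i+1})$ is the lower endpoint of the $y$-interval $P_i\cap\{x=1\}$ and these intervals are stacked downward. The discrepancy originates in the paper itself: its proof of Lemma~\ref{lem:forder} silently computes with $\frac{a-1}{b}$ rather than the displayed $\frac{1-a}{b}$, and the monotonicity actually needed later (so that $\lambda_j$ with $j>i$ violates Condition 3) is precisely the decreasing one your computation would produce. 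So the repair is only a sign flip — prove $f(\lambda_i)>f(\lambda_{i+1})$, or equivalently run your argument for $-f$ — but as written your plan commits to an inequality in the wrong direction and cannot close. Two smaller points: your convention $h_{-1}=0$, $v_{-1}=1$ is not what the extended trigonometric formulas give, so the $i=1$ case genuinely must be checked by hand as you propose; and negativity of $N_i$ for $i>\lfloor n/2\rfloor+1$, where $v_{i-2}>v_{i-1}$, is not automatic from monotonicity of the $v_j$ — you need the estimate $v_{i-2}-v_{i-1}<1$, which the displayed factorization supplies.
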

\begin{proof}
    We first show $f(\nu_0) > f\cvec{0}{1}$:
    \[f(\nu_0) - f((0, 1)) = \frac{h_0 - 1}{v_0} - \frac{0 - 1}{1} = \frac{1 - 1}{1} + 1 = 1 >0\]
    Now we show show $f(\nu_{i + 1}) > f(\nu_i)$:
    \begin{align*}f(\nu_{i + 1}) - f(\nu_i) &= \frac{h_{i + 1} - 1}{v_{i + 1}} - \frac{h_i - 1}{v_i} \\[10pt] &=
    \frac{v_i^2 + v_iv_{i + 1} - v_i - v_iv_{i + 1} - v_{i - 1}v_{i + 1} + v_{i + 1}}{v_iv_{i + 1}} \\[10pt] &= \frac{1 + v_{i + 1} - v_i}{v_iv_{i + 1}} > 0\end{align*}
    Now we show $f(\sigma_{i + 1}) > f(\nu_i)$:
    \[f(\sigma_{i + 1}) - f(\nu_i) = \frac{h_{i + 1} - 1}{v_i} - \frac{h_i - 1}{v_i} = \frac{h_{i + 1} - h_i}{v_i} > 0\]
    Finally we show $f(\sigma_i) > f(\sigma_{i + 1})$:
    \begin{align*}f(\sigma_i) - f(\sigma_{i + 1}) &= \frac{h_i - 1}{v_{i - 1}} - \frac{h_{i + 1} - 1}{v_i} \\[10pt] &= \frac{v_i(v_{i - 1} + v_i - 1) - v_{i - 1}(v_i + v_{i + 1} - 1)}{v_{i - 1}v_i}
    \\[10pt] &= \frac{v_i^2 - v_{i - 1}v_{i + 1} + v_{i - 1} - v_i}{v_{i - 1}v_i}\\[10pt] &= \frac{1 + v_{i - 1} - v_i}{v_{i - 1}v_i} > 0\end{align*}
\end{proof}

\begin{lem}\label{lem:order}
\[\operatorname{Slope}(\lambda_2) > \operatorname{Slope}(\lambda_3) > \cdots > \operatorname{Slope}(\lambda_n)\]
\end{lem}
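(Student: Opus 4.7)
My plan is to collapse the piecewise definition of $\lambda_i$ into a single uniform slope formula, and then reduce the desired monotonicity to a log-concavity identity for the sequence $\{v_j\}$. First I would use Remark \ref{counting_backwards} together with the observations that $v_{n-2}=v_0$ and $h_{n-2}=h_1$ (immediate from the trigonometric formulas and $\sin(\pi-x)=\sin x$) to rewrite the slope of every $\lambda_i$ uniformly as $\operatorname{Slope}(\lambda_i) = v_{i-2}/h_{i-2}$ for $2\leq i\leq n$, where $v_j$ and $h_j$ are interpreted via their closed-form trigonometric expressions even when $j$ exceeds the natural index range. This unifies the three piecewise cases from Proposition \ref{big prop} into a single expression.

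Next I would invoke the identity $h_j = v_{j-1} + v_j$ (Remark \ref{vertical_squares}, whose underlying trigonometric content $\sin(\pi/n)=2\sin(\pi/(2n))\cos(\pi/(2n))$ extends it to all relevant $j$) to rewrite
\[
\operatorname{Slope}(\lambda_i) = \frac{v_{i-2}}{v_{i-3}+v_{i-2}} = \frac{1}{1+v_{i-3}/v_{i-2}}.
\]
It then suffices to prove that $v_{j-1}/v_j$ is strictly increasing in $j$ on $0 \leq j \leq n-2$ (noting that the extended formula gives $v_{-1}=0$), which is equivalent to the log-concavity statement $v_j^2 > v_{j-1}v_{j+1}$.

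Finally, a direct product-to-sum computation using $2\sin^2 A = 1-\cos(2A)$ and $2\sin B \sin C = \cos(B-C)-\cos(B+C)$ yields the clean identity
\[
v_j^2 - v_{j-1}v_{j+1} = \csc^2(\pi/n)\bigl[\sin^2((j+1)\pi/n) - \sin(j\pi/n)\sin((j+2)\pi/n)\bigr] = 1,
\]
which, together with the positivity of $v_j$ on the relevant range (the sine arguments lie in $(0,\pi)$), completes the proof. The main obstacle is the preliminary index bookkeeping in establishing the uniform slope formula and verifying that the Remarks remain valid under the extended interpretation of $v_j$ and $h_j$; once that is in place, the log-concavity identity collapses the comparison of all $n-1$ slopes into a single one-line manipulation.
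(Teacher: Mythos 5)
Your proof is correct, and it takes a genuinely different route from the paper's. The paper splits the chain into three pieces: it computes $\operatorname{Slope}(\nu_i)=\bigl(1+\sin(\pi i/n)\csc(\pi(i+1)/n)\bigr)^{-1}$ and differentiates in $i$ to handle the $\nu$-range, does a separate parity case analysis (even versus odd $n$) to compare $\lambda_{\floor{n/2}+1}$ with $\lambda_{\floor{n/2}+2}$ at the junction, and then computes and differentiates $\operatorname{Slope}(\sigma_i)$ for the $\sigma$-range. You instead exploit the reflection identities $h_{n-i}=h_{i-1}$, $v_{n-i}=v_{i-2}$ of Remark \ref{counting_backwards} — which the paper itself already uses in the statement of Proposition \ref{big prop} when it writes the winner on $P_i$ as $\cvec{h_{i-2}}{v_{i-2}}=\cvec{h_{n+1-i}}{v_{n-i}}$ — to get the single formula $\operatorname{Slope}(\lambda_i)=v_{i-2}/h_{i-2}$ for all $2\le i\le n$, and then reduce everything to the discrete Wronskian identity $v_j^2-v_{j-1}v_{j+1}=1$ (the $v_j$ satisfy the recurrence $v_{j+1}=2\cos(\pi/n)v_j-v_{j-1}$, so this quantity is constant in $j$). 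Your route eliminates both the calculus and the parity casework, and the junction comparison comes for free; the only care needed is the index bookkeeping you flag (the comparison of $\lambda_i$ with $\lambda_{i+1}$ needs the identity at $j=i-2$ for $0\le j\le n-3$, together with positivity of $v_j$ for $0\le j\le n-2$ and the convention $v_{-1}=0$, all of which hold for the extended trigonometric formulas). What the paper's approach buys is that the explicit closed forms for $\operatorname{Slope}(\nu_i)$ and $\operatorname{Slope}(\sigma_i)$ are reused elsewhere in the appendix; what yours buys is a one-line, calculus-free proof that also makes the symmetry $\operatorname{Slope}(\lambda_i)$ versus $\operatorname{Slope}(\lambda_{n+2-i})$ transparent.
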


\begin{proof}
First we show that for $0 \leq i < \lfloor n/2 \rfloor - 1$, $\operatorname{Slope}(\nu_i) > \operatorname{Slope}(\nu_{i+1})$.
By direct computation, \[\operatorname{Slope}(\nu_i) = \frac{1}{\sin \left(\frac{\pi  i}{n}\right) \csc \left(\frac{\pi  (i+1)}{n}\right)+1},\] so we show that this is a decreasing function of $i$. Indeed, computing the derivative: \[\frac{d}{di}\operatorname{Slope}(\nu_i) = -\frac{\pi  \sin \left(\frac{\pi }{n}\right)}{n \left(\sin \left(\frac{\pi 
   i}{n}\right)+\sin \left(\frac{\pi  (i+1)}{n}\right)\right)^2} < 0.\] Since $\lambda_{i+1} = \nu_i$ for $0 \leq i \leq \lfloor n/2 \rfloor - 1$, we've shown the inequality $\operatorname{Slope}(\lambda_j) > \operatorname{Slope}(\lambda_{j+1})$ for $j = 2, \ldots, \lfloor n/2 \rfloor$.

Now we show that $\operatorname{Slope}(\lambda_{\lfloor n/2 \rfloor + 1}) > \operatorname{Slope}(\lambda_{\lfloor n/2 \rfloor + 2})$.
For even $n$, we must show that $\operatorname{Slope}(\nu_{n/2 - 1}) > \operatorname{Slope}(\sigma_{n/2 - 1})$.
The horizontal component of both of these vectors is $h_{n/2 - 1}$.
However, the vertical component of $\nu_{n/2 - 1}$ is $v_{n/2 - 1}$, which is greater than $v_{n/2 - 2}$, the vertical component of $\sigma_{n/2 - 1}$. This proves the claim for even $n$.
For odd $n$, we must show that $\operatorname{Slope}(\nu_{(n-3)/2}) > \operatorname{Slope}(\sigma_{(n-1)/2})$. The vertical components of both of these vectors is $v_{(n-3)/2}$, so it suffices to compare the horizontal components. The horizontal component of $\nu_{(n-3)/2}$ is $h_{(n-3)/2}$, which is less than $h_{(n-1)/2}$, the horizontal component of $\sigma_{(n-1)/2}$. This proves the inequality for odd $n$. 

Finally we show that for $1 \leq i < \lceil n/2 \rceil - 1$, $\operatorname{Slope}(\sigma_{i+1}) > \operatorname{Slope}(\sigma_{i})$.
By direct computation, \[\operatorname{Slope}(\sigma_i) = \frac{1}{\sin \left(\frac{\pi  (i+1)}{n}\right) \csc \left(\frac{\pi  i}{n}\right)+1}\] so we show that this is an increasing function of $i$. Indeed, computing the derivative: \[\frac{d}{di}\operatorname{Slope}(\sigma_i) = \frac{\pi  \sin \left(\frac{\pi }{n}\right)}{n \left(\sin \left(\frac{\pi 
   i}{n}\right)+\sin \left(\frac{\pi  (i+1)}{n}\right)\right)^2} > 0.\] Since $\lambda_{n+1-i} = \sigma_i$ for $1 \leq i < \lceil n/2 \rceil - 1$, we've shown the inequality $\operatorname{Slope}(\lambda_j) > \operatorname{Slope}(\lambda_{j+1})$ for $\lceil n/2 \rceil + 1 \leq j < n$. Together, these inequalities are equivalent to the lemma.
\end{proof}

\noindent \textbf{High-level overview of casework.} Recall that the goal of our casework is to show that the winning vectors are found among the $\lambda_i$. In \textbf{Case 1}, we prove that $\cvec{0}{1}$ wins when $0 < y \leq 1$. In \textbf{Case 2}, we show that the vectors $\nu_i$ win on $P_{i+1} \cap \{x=1\}$ for $0 \leq i \leq \lfloor n/2 \rfloor - 1$. In \textbf{Case 3}, we prove that the vector $\nu_{\lfloor n/2 \rfloor - 1}$ wins on the region $P_{\lfloor n/2 \rfloor + 1} \cap \{x=1\}$. In \textbf{Case 4}, we show that the vectors $\sigma_i$ win in $P_{n+1 - i} \cap \{x=1\}$. In \textbf{Case 5}, we show that $\sigma_1$ wins on $P_n \cap\{x=1\}$. Each of our cases are divided into several subcases which each eliminate certain vectors from winning over the proposed winning vector. After our casework, we argue that the vectors $\lambda_i$ win in the subregions prescribed in Proposition \ref{big prop}.

\medskip

\noindent \textbf{Case 1.} First we will prove $\cvec{0}{1}$ wins on $x = 1$, $0 < y \leq 1$. Condition 3 stipulates that $a < 1$; since each horizontal length in $S'$ is at least 1, the only vectors satisfying this constraint are vertical ones. These have the same slope, as do their images when multiplied by matrices $M_{x, y}$, so we simply choose $\cvec{0}{1}$ as the winning vector (any choice of vertical saddle connection on $S'$ would be valid).
Note that for $x = 1$, $0 < y \leq 1$, $M_{x, y}\cvec{0}{1} = \cvec{y}{1}$, so the image has positive horizontal length at most $1$ as desired.

\bigskip

Before proceeding to the other cases, we will label vertices as follows: the concave vertex on the upper left side where the sides of length $v_k$ and $h_{k - 1}$ meet will be labeled $L_k$, and the concave vertex on the lower right side where the sides of length $v_{k - 1}$ and $h_{k}$ meet will be labeled $R_k$, as shown in the diagram in Figure \ref{fig:labeled_vertices1}. Under this labeling, note that $\nu_i$ joins $L_{i}$ and $L_{i+1}$. 

\begin{figure}[h]
\begin{tikzpicture}
\coordinate[label=below left:$L_0$] (a) at (0,0);
\coordinate (b) at (0,1);
\coordinate[label=below:$R_0$] (c) at (1,0);
\coordinate[label=above left:$L_1$] (d) at (1,1);
\coordinate (e) at (3.801937735804838,0);
\coordinate[label=below right:$R_1$] (f) at (3.801937735804838,1);
\coordinate (g) at (1,2.801937735804838);
\coordinate[label=above left:$L_2$] (h) at (3.801937735804838,2.801937735804838);
\coordinate[label=below right:$R_2$] (i) at (7.850855075327144,2.801937735804838);
\coordinate (j) at (7.850855075327144,1);
\coordinate[label=below right:$L_3$] (k) at (7.850855075327144,5.048917339522305);
\coordinate (l) at (3.801937735804838,5.048917339522305);
\coordinate[label=below right:$R_3$] (m) at (12.344814282762078,5.048917339522305);
\coordinate (n) at (12.344814282762078,2.801937735804838);

\draw [line width=1pt] (a) -- (b) node at (-0.4, 0.5) {};
\draw [line width=1pt] (b)-- (d);
\draw [line width=1pt] (a)-- (c) node at (0.5, -0.4) {};
\draw [line width=1pt] (c)-- (e) node at (2.4, -0.4) {};
\draw [line width=1pt] (e)-- (f);
\draw [line width=1pt] (f)-- (j) node at (5.825, 0.6) {};
\draw [line width=1pt] (j)-- (i);
\draw [line width=1pt] (i)-- (n) node at (10.1, 2.4) {};
\draw [line width=1pt] (n)-- (m);
\draw [line width=1pt] (m)-- (k);
\draw [line width=1pt] (k)-- (l);
\draw [line width=1pt] (l)-- (h) node at (3.4, 3.92) {};
\draw [line width=1pt] (h)-- (g);
\draw [line width=1pt] (g)-- (d) node at (0.6, 1.9) {};

\draw [dashed] (c) -- (d);
\draw[dashed] (d) -- (f);
\draw [dashed] (h) -- (f);
\draw[dashed] (h) -- (i);
\draw [dashed] (k)-- (i);
\end{tikzpicture}
    \caption{An example of vertex labeling on $\mathcal{S'}$ for $n = 7$.}
    \label{fig:labeled_vertices1}
\end{figure}
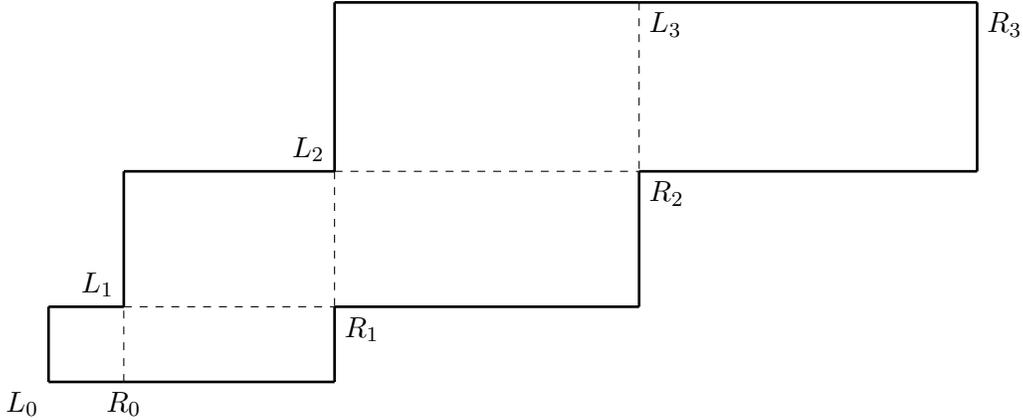

\medskip

\noindent\textbf{Case 2.} We now prove that $\nu_i$ wins on $P_{i+2} \cap \{x = 1\}$ for $0 \leq i \leq \lfloor n/2 \rfloor - 1$. 

Fix some $i$ satisfying $0 \leq i \leq \lfloor n/2 \rfloor - 1$. Recall that as a holonomy vector, $v_i = \cvec{h_i}{v_i}$. We first compute the bound $q$ for Condition $1$ in this case. Suppose $\nu' = \cvec{a}{b}$ is a vector whose image has horizontal component at most $1$ and slope less than the image of $\nu_i$.
In order for the image of $\nu'$ to have horizontal component at most 1, we require $a + by \leq 1$. Because $\nu'$ is a saddle connection on the staircase shape, which has horizontal and vertical distances $\geq 1$, it must be the case that $a \geq 1$ and $b \geq 1$. Furthermore, since the matrices in the Poincar\'e section preserve the ordering of slopes, in order for the image $\nu'$ to have smaller slope that the image of $\nu_i$, it must be true that $b/a < v_i/h_i$ (where the latter quantity is the slope of $\nu_i$). Combining these facts, we find
    \begin{align*} a \leq 1 - by < 1 - \frac{v_i}{h_i}ay < 1 + \frac{v_i(h_{i + 1} - 1)}{h_iv_{i + 1}}a &\implies \\[10pt]
    a \bigg(1 - \frac{v_i(h_{i + 1} - 1)}{h_iv_{i + 1}} \bigg) = a \bigg( \frac{v_i - 1}{v_iv_{i + 1} + v_{i - 1}v_{i + 1}} \bigg) < 1 &\implies \\[10pt]
    a < \frac{v_iv_{i + 1} + v_{i - 1}v_{i + 1}}{v_i - 1} = 1 + h_{i + 1} + \frac{v_{i + 1}}{v_i - 1}.\end{align*}
    Now, note that $\frac{v_{i + 1}}{v_i - 1}$ varies inversely with $i$:
    \[\frac{d}{di} \bigg( \frac{v_{i + 1}}{v_i - 1} \bigg) = - \frac{\pi \sin(\pi/n) (1 + \cos((2 + i)\pi/n))}{n(\sin((1 + i)\pi/n) - \sin(\pi/n))^2} < 0.\]
    Since $\frac{v_{i + 1}}{v_i - 1}$ decreases as $i$ increases, we find
    \[a < 1 + h_{i + 1} + \frac{v_{i + 1}}{v_i - 1} \leq 1 + h_{i + 1} + \frac{v_2}{v_1 - 1} = 1 + h_1 + h_{i + 1}\] Thus, our bound $q$ for Condition 1 will be $1 + h_1 + h_{i+1}$. 
    
    \medskip
    
    \textbf{Subcase 2.1.} We first rule out all vectors that do not pass through an edge of the staircase; that is, we will show that any vector other than $\nu_i$ that does not pass through an edge of the staircase fails to meet at least one of the conditions. Recall the labeling of $\mathcal{S'}$ with an example given in Figure \ref{fig:labeled_vertices1}.
    All $\sigma_j$ and the $\nu_j$ with $j > i$ violate Condition 3 by the ordering of the $f(\lambda_j)$s proven in Lemma \ref{lem:forder}, and for $j < i$, the vector $\nu_j$ has larger slope than $\nu_i$ by Lemma \ref{lem:order}, so Condition 2 is violated.
    
    \textbf{Subsubcase 2.1.1.} Now consider vectors linking $L_j$ and $L_k$ for some $k > j + 1$ (we don't need to consider $k = j+1$ because the vectors linking $L_j$ and $L_{j+1}$ are precisely the set of $\nu_j$).
    If $k \leq i + 1$, we note that $\cvec{a}{b}$ will be strictly steeper than $\nu_k$ and hence violates Condition 2; this is because $\cvec{a}{b}$ would have to pass to the right of $L_{k-1}$ and hence would be steeper than $\nu_{k-1}$ (since $\nu_{k-1}$ joins $L_{k-1}$ and $L_k$).
    Since $\nu_{k-1}$ is at least as steep as $\nu_i$ for $k \leq i+1$, Condition 2 is violated.
    
    \begin{figure}[h!]
    \centering
\begin{tikzpicture}[scale=0.75]
\coordinate[label=below left:$L_0$] (a) at (0,0);
\coordinate (b) at (0,1);
\coordinate[label=below:$R_0$] (c) at (1,0);
\coordinate[label=above left:$L_1$] (d) at (1,1);
\coordinate (e) at (3.801937735804838,0);
\coordinate[label=below right:$R_1$] (f) at (3.801937735804838,1);
\coordinate (g) at (1,2.801937735804838);
\coordinate[label=above left:$L_2$] (h) at (3.801937735804838,2.801937735804838);
\coordinate[label=below right:$R_2$] (i) at (7.850855075327144,2.801937735804838);
\coordinate (j) at (7.850855075327144,1);
\coordinate[label=below right:$L_3$] (k) at (7.850855075327144,5.048917339522305);
\coordinate (l) at (3.801937735804838,5.048917339522305);
\coordinate[label=below right:$R_3$] (m) at (12.344814282762078,5.048917339522305);
\coordinate (n) at (12.344814282762078,2.801937735804838);

\draw [line width=1pt] (a) -- (b) node at (-0.4, 0.5) {};
\draw [line width=1pt] (b)-- (d);
\draw [line width=1pt] (a)-- (c) node at (0.5, -0.4) {};
\draw [line width=1pt] (c)-- (e) node at (2.4, -0.4) {};
\draw [line width=1pt] (e)-- (f);
\draw [line width=1pt] (f)-- (j) node at (5.825, 0.6) {};
\draw [line width=1pt] (j)-- (i);
\draw [line width=1pt] (i)-- (n) node at (10.1, 2.4) {};
\draw [line width=1pt] (n)-- (m);
\draw [line width=1pt] (m)-- (k);
\draw [line width=1pt] (k)-- (l);
\draw [line width=1pt] (l)-- (h) node at (3.4, 3.92) {};
\draw [line width=1pt] (h)-- (g);
\draw [line width=1pt] (g)-- (d) node at (0.6, 1.9) {};
\draw [line width=3pt, color=yellow] (h)--(k) node[midway, above, color=black] {$\nu_2$};
\draw [line width=3pt, color=green] (d)--(h);
\draw [line width=2pt, color=black] (a)--(h);
\draw [line width=2pt, color=black] (a)--(k);
\draw [dashed] (c) -- (d);
\draw[dashed] (d) -- (f);
\draw [dashed] (h) -- (f);
\draw[dashed] (h) -- (i);
\draw [dashed] (k)-- (i);
\end{tikzpicture}
    \caption{If fix $i=2$ in this case, we see that any vector linking $L_j$ and $L_{k}$ (black vectors) with $k \leq i+1$ while staying in the staircase is either steeper than $\nu_1$ (green vector) or $\nu_2$ (yellow vector).}
    \label{fig:labeled_vertices2}
\end{figure}
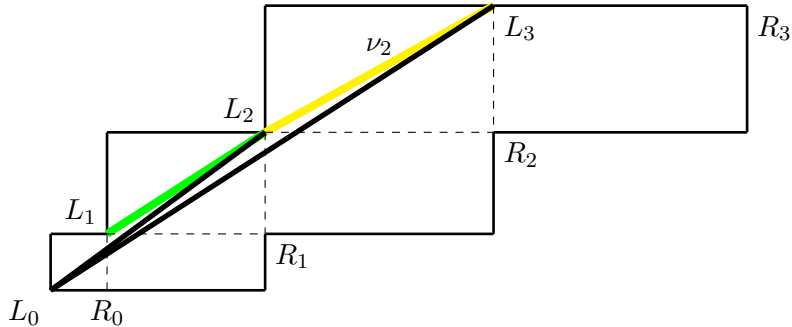

Now suppose $k > i + 1$. It must be true that $j \geq k - 2$, otherwise Condition 1 is violated: \[a \geq h_k + h_{k - 1} + h_{k - 2} \geq h_{i + 1} + h_1 + 1\] If $k > i + 2$, then $j > i$, meaning $\cvec{a}{b}$ has larger $a$ and smaller slope than $\nu_j$, which we showed earlier violates Condition 3, and hence $\cvec{a}{b}$ violates Condition 3 as well. This leaves the case of $k = i+2$, meaning $j = i$ is forced. The holonomy vector here is $\cvec{h_i+h_{i+1}}{v_i+v_{i+1}}$ and can be shown to violate Condition 3:
    \begin{align*} h_i + h_{i + 1} - (v_{i + 1} + v_i) \bigg( \frac{h_{i + 1} - 1}{v_{i + 1}} \bigg) &= 1 + h_i - v_i \bigg( \frac{h_{i + 1} - 1}{v_{i + 1}} \bigg)  \\[5pt]
   &=  1 + v_{i - 1} + v_i - v_i \bigg( \frac{v_i + v_{i + 1} - 1}{v_{i + 1}} \bigg) \\[5pt] &= 1 + \frac{v_i - (v_i^2 - v_{i - 1}v_{i + 1})}{v_{i + 1}} \\[5pt] &=
    1 + \frac{v_i - 1}{v_{i + 1}} \geq 1\end{align*} Thus, all vectors connecting an $L_j$ to another $L_k$ have been eliminated. 
    
    \textbf{Subsubcase 2.1.2.} Now consider the vectors linking some $R_j$ to some $L_k$.
    If $k \leq i + 1$, then Condition 1 is violated, for the same reasons that we mentioned in the $k \leq i + 1$ case for vectors connecting $L_j$ and $L_k$.  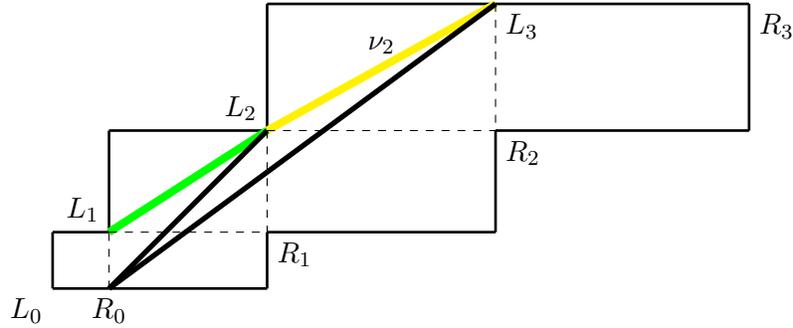
\begin{figure}[h!]
    \centering
\begin{tikzpicture}[scale=0.75]
\coordinate[label=below left:$L_0$] (a) at (0,0);
\coordinate (b) at (0,1);
\coordinate[label=below:$R_0$] (c) at (1,0);
\coordinate[label=above left:$L_1$] (d) at (1,1);
\coordinate (e) at (3.801937735804838,0);
\coordinate[label=below right:$R_1$] (f) at (3.801937735804838,1);
\coordinate (g) at (1,2.801937735804838);
\coordinate[label=above left:$L_2$] (h) at (3.801937735804838,2.801937735804838);
\coordinate[label=below right:$R_2$] (i) at (7.850855075327144,2.801937735804838);
\coordinate (j) at (7.850855075327144,1);
\coordinate[label=below right:$L_3$] (k) at (7.850855075327144,5.048917339522305);
\coordinate (l) at (3.801937735804838,5.048917339522305);
\coordinate[label=below right:$R_3$] (m) at (12.344814282762078,5.048917339522305);
\coordinate (n) at (12.344814282762078,2.801937735804838);

\draw [line width=1pt] (a) -- (b) node at (-0.4, 0.5) {};
\draw [line width=1pt] (b)-- (d);
\draw [line width=1pt] (a)-- (c) node at (0.5, -0.4) {};
\draw [line width=1pt] (c)-- (e) node at (2.4, -0.4) {};
\draw [line width=1pt] (e)-- (f);
\draw [line width=1pt] (f)-- (j) node at (5.825, 0.6) {};
\draw [line width=1pt] (j)-- (i);
\draw [line width=1pt] (i)-- (n) node at (10.1, 2.4) {};
\draw [line width=1pt] (n)-- (m);
\draw [line width=1pt] (m)-- (k);
\draw [line width=1pt] (k)-- (l);
\draw [line width=1pt] (l)-- (h) node at (3.4, 3.92) {};
\draw [line width=1pt] (h)-- (g);
\draw [line width=1pt] (g)-- (d) node at (0.6, 1.9) {};
\draw [line width=3pt, color=yellow] (h)--(k) node[midway, above, color=black] {$\nu_2$};
\draw [line width=3pt, color=green] (d)--(h);
\draw [line width=2pt, color=black] (c)--(h);
\draw [line width=2pt, color=black] (c)--(k);
\draw [dashed] (c) -- (d);
\draw[dashed] (d) -- (f);
\draw [dashed] (h) -- (f);
\draw[dashed] (h) -- (i);
\draw [dashed] (k)-- (i);
\end{tikzpicture}
    \caption{If fix $i=2$ in this case, we see that any vector linking $L_j$ and $R_{k}$ with $k \leq i+1$ while staying in the staircase (black vectors) is either steeper than $\nu_1$ (green vector) or $\nu_2$ (yellow vector).}
    \label{fig:labeled_vertices3}
\end{figure}
    
    Suppose $k > i + 1$. If $j < k - 3$, then Condition 1 is violated:
    \[a \geq h_{k - 1} + h_{k - 2} + h_{k - 3} \geq h_{i + 1} + h_1 + 1.\]
    If $j = k - 2$, then Condition 2 is violated:
    \[b/a = (v_{k - 1} + v_{k - 2})/h_{k - 1} = 1 \geq v_i/h_i\]
    Since it must be true that $j < k - 1$ for $\cvec{a}{b}$ to have positive slope, we are left with $j = k - 3$, $j \geq i - 1$. Any of these vectors violates Condition 2:
    \begin{align*}b/a &= \frac{v_j + v_{j + 1} + v_{j + 2}}{h_{j + 1} + h_{j + 2}} \\[5pt] &> \frac{v_j + v_{j + 1} + v_{j + 2}}{h_j + h_{j + 1} + h_{j + 2}} \\[5pt]
    &= \frac{(v_j + v_{j + 1} + v_{j + 2})(v_j + v_{j + 1})}{(h_j + h_{j + 1} + h_{j + 2})h_{j + 1}} \\[5pt] &= \frac{v_j^2 + 2v_jv_{j + 1} + v_{j + 1}^2 + v_jv_{j + 2} + v_{j + 1}v_{j + 2}}{(h_j + h_{j + 1} + h_{j + 2})h_{j + 1}} \\[5pt] &=
    \frac{v_j^2 - v_{j - 1}v_{j + 1} - v_{j + 1}^2 + v_jv_{j + 2} + v_{j + 1}^2 + 2v_jv_{j + 1} + v_{j + 1}^2 + v_{j - 1}v_{j + 1} + v_{j + 1}v_{j + 2}}{(h_j + h_{j + 1} + h_{j + 2})h_{j + 1}}\\[5pt] &= 
     \frac{1 - 1 + v_{j - 1}v_{j + 1} + 2v_jv_{j + 1} + 2v_{j + 1}^2 + v_{j + 1}v_{j + 2}}{(h_j + h_{j + 1} + h_{j + 2})h_{j + 1}} \\[5pt]&=
    \frac{(h_j + h_{j + 1} + h_{j + 2})v_{j + 1}}{(h_j + h_{j + 1} + h_{j + 2})h_{j + 1}} = v_{j + 1}/h_{j + 1} \geq v_i/h_i. \end{align*}
    \medskip
    Thus, all vectors connecting a vertex of the form $R_j$ to a vertex of the form $L_k$ violate at least one condition.
    
    \textbf{Subsubcase 2.1.3.} Now consider a saddle connection staying in the staircase connecting an $R_j$ or $L_j$ to some $R_k$. We showed previously that all $\sigma_i$'s violate Condition 3. Given any $k$, a vector $\cvec{a}{b}$ terminating at $R_k$ has $a$ at least as large as the horizontal component of $\sigma_k$ and slope that is shallower than the slope of $\sigma_k$ ($\cvec{a}{b}$ passes into $H_{k-1}$ above the starting point of $\sigma_k$ and terminates at the same place as $\sigma_k$), so any such $\cvec{a}{b}$ violates Condition 3. This eliminates all vectors staying in the staircase (i.e., vectors that do not pass through an edge).
    
     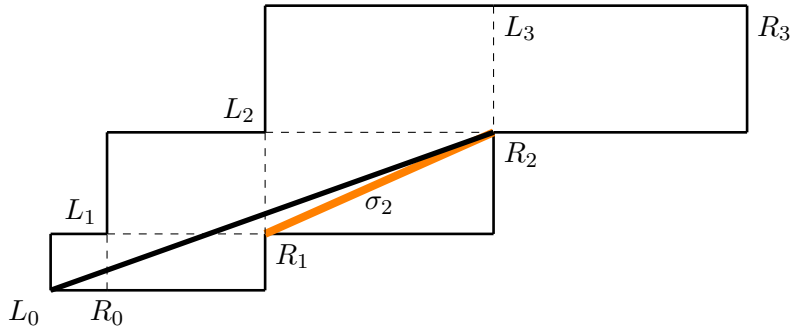
\begin{figure}[h!]
    \centering
\begin{tikzpicture}[scale=0.75]
\coordinate[label=below left:$L_0$] (a) at (0,0);
\coordinate (b) at (0,1);
\coordinate[label=below:$R_0$] (c) at (1,0);
\coordinate[label=above left:$L_1$] (d) at (1,1);
\coordinate (e) at (3.801937735804838,0);
\coordinate[label=below right:$R_1$] (f) at (3.801937735804838,1);
\coordinate (g) at (1,2.801937735804838);
\coordinate[label=above left:$L_2$] (h) at (3.801937735804838,2.801937735804838);
\coordinate[label=below right:$R_2$] (i) at (7.850855075327144,2.801937735804838);
\coordinate (j) at (7.850855075327144,1);
\coordinate[label=below right:$L_3$] (k) at (7.850855075327144,5.048917339522305);
\coordinate (l) at (3.801937735804838,5.048917339522305);
\coordinate[label=below right:$R_3$] (m) at (12.344814282762078,5.048917339522305);
\coordinate (n) at (12.344814282762078,2.801937735804838);

\draw [line width=1pt] (a) -- (b) node at (-0.4, 0.5) {};
\draw [line width=1pt] (b)-- (d);
\draw [line width=1pt] (a)-- (c) node at (0.5, -0.4) {};
\draw [line width=1pt] (c)-- (e) node at (2.4, -0.4) {};
\draw [line width=1pt] (e)-- (f);
\draw [line width=1pt] (f)-- (j) node at (5.825, 0.6) {};
\draw [line width=1pt] (j)-- (i);
\draw [line width=1pt] (i)-- (n) node at (10.1, 2.4) {};
\draw [line width=1pt] (n)-- (m);
\draw [line width=1pt] (m)-- (k);
\draw [line width=1pt] (k)-- (l);
\draw [line width=1pt] (l)-- (h) node at (3.4, 3.92) {};
\draw [line width=1pt] (h)-- (g);
\draw [line width=1pt] (g)-- (d) node at (0.6, 1.9) {};
\draw [line width=3pt, color=orange] (f)--(i) node[midway, below, color=black] {$\sigma_2$};
\draw [line width=2pt, color=black] (a)--(i);
\draw [dashed] (c) -- (d);
\draw[dashed] (d) -- (f);
\draw [dashed] (h) -- (f);
\draw[dashed] (h) -- (i);
\draw [dashed] (k)-- (i);
\end{tikzpicture}
    \caption{For the example of $k = 2$, we see that any vector connecting some $R_j$ or $L_j$ (in this case $L_0$) and $R_2$ will pass above $R_1$ and hence be longer and shallower than $\sigma_2$.}
    \label{fig:labeled_vertices4}
\end{figure}

\medskip
\newpage
\textbf{Subcase 2.2.} We now rule out vectors that pass through the edges of the staircase. 

\textbf{Subsubcase 2.2.1.} If the first edge that a vector $\cvec{a}{b}$ passes through is a vertical edge of length $v_k$, then $\cvec{a}{b}$ is longer and shallower than $\sigma_{k + 1}$, and so $\cvec{a}{b}$ violates Condition 3. 

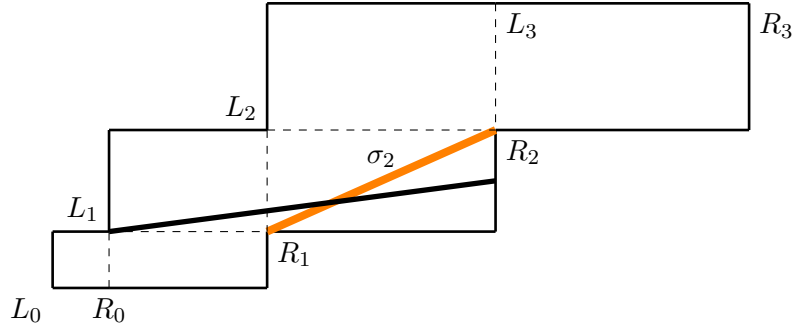
\begin{figure}[h!]
    \centering
\begin{tikzpicture}[scale=0.75]
\coordinate[label=below left:$L_0$] (a) at (0,0);
\coordinate (b) at (0,1);
\coordinate[label=below:$R_0$] (c) at (1,0);
\coordinate[label=above left:$L_1$] (d) at (1,1);
\coordinate (e) at (3.801937735804838,0);
\coordinate[label=below right:$R_1$] (f) at (3.801937735804838,1);
\coordinate (g) at (1,2.801937735804838);
\coordinate[label=above left:$L_2$] (h) at (3.801937735804838,2.801937735804838);
\coordinate[label=below right:$R_2$] (i) at (7.850855075327144,2.801937735804838);
\coordinate (j) at (7.850855075327144,1);
\coordinate[label=below right:$L_3$] (k) at (7.850855075327144,5.048917339522305);
\coordinate (l) at (3.801937735804838,5.048917339522305);
\coordinate[label=below right:$R_3$] (m) at (12.344814282762078,5.048917339522305);
\coordinate (n) at (12.344814282762078,2.801937735804838);

\draw [line width=1pt] (a) -- (b) node at (-0.4, 0.5) {};
\draw [line width=1pt] (b)-- (d);
\draw [line width=1pt] (a)-- (c) node at (0.5, -0.4) {};
\draw [line width=1pt] (c)-- (e) node at (2.4, -0.4) {};
\draw [line width=1pt] (e)-- (f);
\draw [line width=1pt] (f)-- (j) node at (5.825, 0.6) {};
\draw [line width=1pt] (j)-- (i) coordinate[midway] (z) {};
\draw [line width=1pt] (i)-- (n) node at (10.1, 2.4) {};
\draw [line width=1pt] (n)-- (m);
\draw [line width=1pt] (m)-- (k);
\draw [line width=1pt] (k)-- (l);
\draw [line width=1pt] (l)-- (h) node at (3.4, 3.92) {};
\draw [line width=1pt] (h)-- (g);
\draw [line width=1pt] (g)-- (d) node at (0.6, 1.9) {};
\draw [line width=3pt, color=orange] (f)--(i) node[midway, above, color=black] {$\sigma_2$};
\draw [line width=2pt, color=black] (d)--(z);
\draw [dashed] (c) -- (d);
\draw[dashed] (d) -- (f);
\draw [dashed] (h) -- (f);
\draw[dashed] (h) -- (i);
\draw [dashed] (k)-- (i);
\end{tikzpicture}
    \caption{For the example of $k = 1$, we see that any vector passing through $v_1$ is longer and shallower than $\sigma_2$.}
    \label{fig:labeled_vertices5}
\end{figure}

\textbf{Subsubcase 2.2.2.} Thus we are left with vectors that first pass through a horizontal edge $h_k$. Note that in this case, the vector must travel a horizontal distance of at least $h_k$. If $k \leq i$, then $\cvec{a}{b}$ is steeper than $\nu_k$, which we showed is at least as steep as $\nu_i$, so Condition 2 is violated.

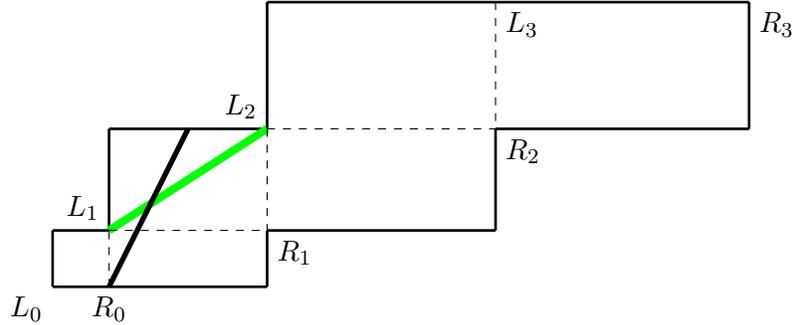
\begin{figure}[h!]
    \centering
\begin{tikzpicture}[scale=0.75]
\coordinate[label=below left:$L_0$] (a) at (0,0);
\coordinate (b) at (0,1);
\coordinate[label=below:$R_0$] (c) at (1,0);
\coordinate[label=above left:$L_1$] (d) at (1,1);
\coordinate (e) at (3.801937735804838,0);
\coordinate[label=below right:$R_1$] (f) at (3.801937735804838,1);
\coordinate (g) at (1,2.801937735804838);
\coordinate[label=above left:$L_2$] (h) at (3.801937735804838,2.801937735804838);
\coordinate[label=below right:$R_2$] (i) at (7.850855075327144,2.801937735804838);
\coordinate (j) at (7.850855075327144,1);
\coordinate[label=below right:$L_3$] (k) at (7.850855075327144,5.048917339522305);
\coordinate (l) at (3.801937735804838,5.048917339522305);
\coordinate[label=below right:$R_3$] (m) at (12.344814282762078,5.048917339522305);
\coordinate (n) at (12.344814282762078,2.801937735804838);

\draw [line width=1pt] (a) -- (b) node at (-0.4, 0.5) {};
\draw [line width=1pt] (b)-- (d);
\draw [line width=1pt] (a)-- (c) node at (0.5, -0.4) {};
\draw [line width=1pt] (c)-- (e) node at (2.4, -0.4) {};
\draw [line width=1pt] (e)-- (f);
\draw [line width=1pt] (f)-- (j) node at (5.825, 0.6) {};
\draw [line width=1pt] (j)-- (i);
\draw [line width=1pt] (i)-- (n) node at (10.1, 2.4) {};
\draw [line width=1pt] (n)-- (m);
\draw [line width=1pt] (m)-- (k);
\draw [line width=1pt] (k)-- (l);
\draw [line width=1pt] (l)-- (h) node at (3.4, 3.92) {};
\draw [line width=1pt] (h)-- (g) coordinate[midway] (z);
\draw [line width=1pt] (g)-- (d) node at (0.6, 1.9) {};
\draw [line width=3pt, color=green] (d)--(h);
\draw [line width=2pt, color=black] (c)--(z);
\draw [dashed] (c) -- (d);
\draw[dashed] (d) -- (f);
\draw [dashed] (h) -- (f);
\draw[dashed] (h) -- (i);
\draw [dashed] (k)-- (i);
\end{tikzpicture}
    \caption{If fix $k=1$ and $i = 2$ in this case, we see that any vector passing through the side of length $h_1$ is steeper than $\nu_1$ (green vector), which in turn is steeper than $\nu_2$.}
    \label{fig:labeled_vertices6}
\end{figure}

Given that $k > i$, the vector $\cvec{a}{b}$ can travel at most two horizontal distances (with a single exception), otherwise Condition 1 is violated for $i > 0$; more precisely, if $\cvec{a}{b}$ travelled three horizontal distances, note that $a$ must be at least $h_k + h_{k+1} + h_l$ for some distance $h_l$. This is because the vector must traverse $h_k$ if it crosses through a horizontal edge with length $h_k$, and moreover, since the vector must have slope less than $1$ to satisfy Condition 2, it must pass through the vertical edge of length $v_{k-1}$ after passing through the horizontal edge of length $h_k$, then traverse horizontal length $h_{k-1}$ as well. In this case, Condition 1 is violated:
\[a \geq h_k + h_{k-1} + h_l \geq h_{i + 1} + h_i + h_l \geq h_{i + 1} + h_1 + 1\]

Thus, we are left with vectors $\cvec{a}{b}$ passing through at most $2$ horizontal distances, with a singular exception. In the special case of $i = 0$, there is one particular vector of this form that does not violate Condition 1. 

This is the vector beginning at $L_0$, passing through the horizontal face just to the left of $L_2$, passing through the vertical face below $R_1$, then ending at $L_1$. This vector is $\cvec{2 + h_1}{2 + v_1}$, and it violates Condition 3: \[a - b \bigg( \frac{h_1 - 1}{v_1} \bigg) = 2 + h_1 - (2 + v_1) = 1 \geq 1.\]

Now we eliminate all other vectors passing through a horizontal side of length $h_k$. Note that $\cvec{a}{b}$ cannot travel only $h_k$ horizontally, because if $\cvec{a}{b}$ first passes through a horizontal side $h_k$, then it must travel at least $v_k + v_{k - 1} = h_k$ vertically, so having horizontal length $h_k$ would mean having a slope of at least $1$, violating Condition 2. Thus, $\cvec{a}{b}$ must then pass through the vertical side of length $v_{k-1}$ and then horizontally travel length $h_{k - 1}$. This is only possible if $\cvec{a}{b}$ begins at $L_k$ or $R_{k - 1}$.

\smallskip

Suppose $\cvec{a}{b}$ begins at $R_{k - 1}$. As it travels the horizontal length of $h_k$, it must climb vertically at least $v_{k - 1} + v_k = h_k$ in order to pass through the horizontal edge of length $h_k$. As we found before, this violates Condition 2.

So we are left with $\cvec{a}{b}$ beginning at $L_k$ and passing through the horizontal edge of length $h_k$ for $k > i$. After passing through $h_k$, the vector must pass through the vertical edge of length $v_{k - 1}$ as stated earlier. Upon passing through $v_{k - 1}$, $\cvec{a}{b}$ can only end at $L_k$ or pass through the horizontal edge of length $h_{k - 1}$ and end at $R_{k - 1}$ without travelling horizontally more than the width of two rectangles or having slope at least $1$.

If the saddle connection ends at $L_k$, it is the vector $(h_{k - 1} + h_k, v_{k - 1} + v_k)$, the same vector as the saddle connection connecting $L_{k - 1}$ and $L_{k + 1}$ without passing through edges, which we have previously eliminated.

If the saddle connection ends at $R_{k-1}$, it is the vector $(h_{k - 1} + h_k, v_{k - 1} + v_{k - 1} + v_k)$, the same vector as the saddle connection connecting $R_{k - 2}$ and $L_{k + 1}$ without passing through edges, which we have previously eliminated.

\textbf{Subcase 2.3.} We have now eliminated all vectors passing through edges of the staircase, with a few exceptions.

The previous proof assumes the staircase extends for an arbitrary number of rectangles in either direction from the rectangle containing $\nu_i$, so in order to complete this proof we must eliminate vectors that reach the ends of the staircase shape.
    
\textbf{Subsubcase 2.3.1.} The vectors that pass through the edge between $L_0$ and $R_0$ will first be eliminated. For $\cvec{a}{b}$ passing through the edge between $L_0$ and $R_0$, consider where it intersects the edge of the staircase or a vertex.
If $\cvec{a}{b}$ ends at some $R_k$ or passes through a vertical edge below some $R_k$, then $\cvec{a}{b}$ is both longer and less steep than $\sigma_k$, which violates Condition 3, so $\cvec{a}{b}$ also violates Condition 3.
If $\cvec{a}{b}$ travels the length of the staircase to the edge at the opposite end of the staircase, then $a \geq 1 + h_1 + h_i$, so Condition 1 is violated.
Hence, $\cvec{a}{b}$ must end at some $L_k$ or pass through a horizontal edge to the left of some $L_k$.
If $k \leq i + 1$ then $\cvec{a}{b}$ is steeper than $\nu_{k - 1}$, which itself is at least as steep as $\nu_i$, so Condition 2 is violated.
If $k > i + 1$, then $a \geq 1 + h_1 + h_{k - 1} \geq 1 + h_1 + h_{i + 1}$, so Condition 1 is violated.

\textbf{Subsubcase 2.3.2.} For even $n$, we will eliminate the vectors passing through the vertical edge below $L_{n/2}$, which forms an edge of the staircase when $n$ is even. Let $k = n/2 - 1$. Notice that such a vector necessarily crosses $h_k$ at least twice. Thus, such a vector cannot travel more than $2$ horizontal distances or else it would violate Condition 1. In fact, such a vector must travel a horizontal distance of exactly $2h_k$. 


We first suppose that $n \geq 6$. The distance travelled by this vector is $2h_k$, so it suffices to show that $h_k \geq 1 + h_1$ in order to violate Condition 1. We compute \[h_k - 1 - h_1 = -2 \cos \left(\frac{\pi }{n}\right)+\cot \left(\frac{\pi }{2 n}\right)-2.\] The derivative of this function is \[\frac{\pi  \left(\csc ^2\left(\frac{\pi }{2 n}\right)-4 \sin \left(\frac{\pi
   }{n}\right)\right)}{2 n^2},\] which is positive for $n \geq 3$. At $n = 6$, this difference evaluates to $0$, meaning that the difference is indeed nonnegative for all $n \geq 6$. In other words, Condition 1 is violated for $n \geq 6$. 
   
   Let $n = 4$. The only possible $\nu_i$ vector in this case is $\nu_0$, since we address the last $\nu_i$ as an entirely separate case from the other $\nu_i$s. Thus Condition 3 is
  \[a - b \bigg (\frac{h_1 - 1}{v_1} \bigg) = a - b \leq 1\]
   Knowing $a = 2h_1$, $b$ can be $v_1$, $1 + v_1$, or $2 + v_1$ without reaching a slope of at least that of $\nu_1$. So,
   \[a - b \geq 2h_1 - (2 + v_1) = v_1 > 1\] Thus Condition 3 is violated. Recall that we do not consider the $n = 2$ case in this paper, so this completes Case 2 for even $n$.
    
\textbf{Subsubcase 2.3.3.} Let $n$ be odd. In this case we must eliminate the vectors passing through the horizontal edge to the right of $L_{\lfloor n/2 \rfloor}$. Let $k = (n - 3)/2$. This vector cannot travel more than $2$ horizontal distances, otherwise we would have $b \geq h_{k + 1} + h_k + h_{k - 1} \geq 1 + h_1 + h_i$, which violates Condition 1. So $\cvec{a}{b}$ must start at $L_k$, $R_{k - 1}$ or $R_k$.
Any $\cvec{a}{b}$ that begins at $L_k$ and passes through the horizontal face before any other faces will have larger $a$ and smaller slope than $\nu_k$, meaning it violates Condition 3.

Otherwise, $\cvec{a}{b}$ must pass through the horizontal face to the left of $L_{k + 1}$ then travel above $R_k$, meaning that over a horizontal distance of $h_k$, it travels vertically at least $v_k + v_{k - 1} = h_k$, so Condition 2 is violated.

Now suppose $\cvec{a}{b}$ begins at $R_{k - 1}$. The only way for $\cvec{a}{b}$ to avoid violating Condition 1 is by ending at $R_{k + 1}$ after passing through the horizontal edge once. For $n > 5$, this violates Condition 1, as $h_1 + 1 \leq h_{k+1}$: \[h_{k+1} - h_1 - 1 = \left(\sin \left(\frac{\pi }{2 n}\right)+\sin \left(\frac{3 \pi }{2 n}\right)-1\right)
   \left(-\csc \left(\frac{\pi }{2 n}\right)\right) > 0.\] 
   
For $n = 5$, this violates Condition 3:
\[f((h_1 + h_2, 1 + 2v_1)) = \frac{1 - h_1 - h_2}{1 + 2v_1} \frac{-3\phi}{1 + 2\phi} < -1 = \frac{1 - h_1}{v_1} = f(\nu_1),\] where $\phi$ is the golden ratio $\phi = \frac{1+\sqrt{5}}{2}$.
For $n = 3$, this saddle connection does not exist.
    
    In the last case, where $\cvec{a}{b}$ begins at $R_k$, it can end at either $R_{k + 1}$, $R_k$, or $L_{k + 1}$ after crossing the right edge precisely once, or else it would violate Condition 1. If $\cvec{a}{b}$ ends at $R_{k + 1}$, then from direct computation (the lower bound of $1$ is the slope when the vector crosses the edge exactly once), this vector has slope at least $1$, which is definitely too steep.
    
    If $\cvec{a}{b}$ ends at $L_{k + 1}$, then it has smaller slope and larger horizontal component than $\nu_k$ (since it would pass through the vertical edge above $L_k$) and thus violates Condition 3.  
    
    The only other possible vector that does not travel horizontally across more than two rectangles is $\nu'$ starting at $R_k$, passing through the horizontal edge to the right of $L_{k + 1}$, passing through the vertical edge below $R_{k + 1}$, passing through the horizontal edge to the left of $L_{k + 1}$, then ending at $R_k$.
    This is the same vector as the one beginning at $R_{k - 1}$, passing through the horizontal edge to the left of $L_{k + 1}$, and ending at $R_{k + 1}$, which we have eliminated. This eliminates all vectors other than $\nu_i$ for Case 2.
    
    Furthermore, $\nu_i$ has positive slope $v_i/h_i$ and is such that $0 < a + by \leq 1$:
    \begin{align*}a + by = h_i + v_i y &> h_i + v_i \cdot \frac{1 - h_{i + 1}}{v_{i + 1}} \\[10pt] &= \frac{h_iv_{i + 1} + v_i - h_{i + 1}v_i}{v_{i + 1}} \\[10pt] &=\frac{v_{i - 1}v_{i + 1} + v_iv_{i + 1} + v_i - v_i^2 - v_iv_{i + 1}}{v_{i + 1}} \\[10pt] &= \frac{v_{i - 1}v_{i + 1} + v_i - v_i^2}{v_{i + 1}} \\[10pt] &= \frac{v_i - 1}{v_{i + 1}} \geq 0,\end{align*} and
   \[ a + by = h_i + v_i y \leq h_i + v_i \cdot \frac{1 - h_i}{v_i} = 1\]
    Thus we have shown that for $\nu_i$ with $0 \leq i < \lfloor n/2 \rfloor - 1$, $\nu_i$ is the winning saddle connection on $P_{i + 2} \cap \{x = 1\}$.

\textbf{Case 3.} We now prove $\nu_{\lfloor n/2 \rfloor - 1}$ has the smallest slope of any saddle connection whose image has horizontal component at most $1$ for all $(1, y) \in \Omega_1$ in the region the region $P_{\lfloor n/2 \rfloor + 1} \cap \{x=1\}$.  Suppose $\cvec{a}{b} \neq \nu_{\lfloor n/2 \rfloor-1}$ wins over $\nu_{\lfloor n/2 \rfloor - 1}$. We start by computing the bound $q$ for Condition 1. 

We have two cases, based on the parity of $n$.

First suppose that $n$ is even. Note that $n \geq 4$, as we exclude the $n = 2$ case from this paper.

The bounds on $y$ demand that  $y > \frac{1 - h_{\lfloor n/2 \rfloor - 1}}{v_{ n/2 - 2}}.$ Then, using the fact that $a \geq 1$ and $a + by \leq 1$, we find:   \begin{align*}
        a \leq 1 - by &< 1 - \frac{v_{\lfloor n/2 \rfloor - 1}}{h_{\lfloor n/2 \rfloor - 1}}ay \\[10pt] &< 1 - \frac{v_{\lfloor n/2 \rfloor - 1}}{h_{\lfloor n/2 \rfloor - 1}}a\left(\frac{1-h_{\lfloor n/2 \rfloor - 1}}{v_{\lfloor n/2 \rfloor - 2}}\right)\\[10pt] &= 1 - \frac{v_{\lfloor n/2 \rfloor - 1}}{v_{\lfloor n/2 \rfloor - 2}}a\left(\frac{1-h_{\lfloor n/2 \rfloor - 1}}{h_{\lfloor n/2 \rfloor - 1}}\right) \end{align*} Rearranging,
        \begin{align*}a\left(1+\frac{v_{\lfloor n/2 \rfloor - 1}}{v_{\lfloor n/2 \rfloor - 2}}\left(\frac{1-h_{\lfloor n/2 \rfloor - 1}}{h_{\lfloor n/2 \rfloor - 1}}\right)\right) < 1 \end{align*} Therefore,
       \begin{align*}a\left(h_{\lfloor n/2 \rfloor - 1} + \frac{v_{\lfloor n/2 \rfloor - 1}}{v_{\lfloor n/2 \rfloor - 2}}(1-h_{\lfloor n/2 \rfloor - 1})\right) &= a\left(\frac{\cos(\pi/(2n)) - \sin(\pi/(2n))}{\cos(\pi/(2n)) + \sin(\pi/(2n))}\right)< h_{\lfloor n/2 \rfloor - 1}
    \end{align*} For $n \geq 4$, $\cos(\pi/(2n)) - \sin(\pi/(2n)) > 0$, so we can safely rearrange:
    \begin{align*}
        a < h_{\lfloor n/2 \rfloor - 1}\left(\frac{\cos(\pi/(2n)) + \sin(\pi/(2n))}{\cos(\pi/(2n)) - \sin(\pi/(2n))}\right) = h_{\lfloor n/2 \rfloor - 1}\left(1+\frac{2}{\cot(\pi/(2n)) - 1}\right)
    \end{align*} Now, we claim that \[\frac{2h_{\lfloor n/2 \rfloor - 1}}{\cot(\pi/(2n)) - 1} \leq 1 + h_1.\] We prove this by showing that \[\frac{2h_{\lfloor n/2 \rfloor - 1}}{\cot(\pi/(2n)) - 1} - (1 + h_1) \leq 0.\] To do so, we first see that when $n = 4$, this expression is $0$. Moreover, taking the derivative with respect to $n$ gives us \[\frac{\pi  \left(2 \sin \left(\frac{\pi }{n}\right)+\cos \left(\frac{2 \pi}{n}\right)\right)}{n^2 \left(\sin \left(\frac{\pi }{n}\right)-1\right)}, \] which is indeed always negative for $n \geq 4$. Hence, our bound becomes \[a < h_{\lfloor n/2 \rfloor - 1}\left(1+\frac{2}{\cot(\pi/(2n)) - 1}\right) \leq 1 + h_1 + h_{\lfloor n/2 \rfloor - 1}.\]
    
    Now suppose that $n$ is odd and greater than $3$ (we will consider the case $n = 3$ later). Then, $y > \frac{1 - h_{\lfloor n/2 \rfloor}}{v_{\lfloor n/2 \rfloor - 1}}$. Combining this with previously mentioned facts, we find:
    \begin{align*}
        a \leq 1 - by &< 1 - \frac{v_{\lfloor n/2 \rfloor - 1}}{h_{\lfloor n/2 \rfloor - 1}}ay \\[10pt] &< 1 - \frac{v_{\lfloor n/2 \rfloor - 1}}{h_{\lfloor n/2 \rfloor - 1}}\left(\frac{1-h_{\lfloor n/2 \rfloor}}{v_{\lfloor n/2 \rfloor - 1}}\right)a\\[10pt] &= 1 - a\left(\frac{1-h_{\lfloor n/2 \rfloor}}{h_{\lfloor n/2 \rfloor - 1}}\right)\end{align*}
        Rearranging,
        \begin{align*}
        a\left(1+\frac{1-h_{\lfloor n/2 \rfloor}}{h_{\lfloor n/2 \rfloor - 1}}\right) = a\left(\frac{h_{\lfloor n/2 \rfloor - 1} -h_{\lfloor n/2 \rfloor}+1}{h_{\lfloor n/2 \rfloor - 1}}\right) &= a \left(\frac{1-2\cos((\lfloor n/2 \rfloor)\pi/n)}{h_{\lfloor n/2 \rfloor - 1}}\right)
        < 1,\end{align*}
        so we conclude
        \begin{align*}
        a\left(\frac{1-2\cos((n-1)\pi/(2n))}{h_{\lfloor n/2 \rfloor - 1}}\right) &< 1.
    \end{align*}
    Observe that \[1-2\cos((n-1)\pi/(2n))> 0.\]
    Hence, we rearrange: \[a < \frac{h_{\lfloor n/2 \rfloor - 1}}{1-2\cos((n-1)\pi/(2n))} = \frac{h_{\lfloor n/2 \rfloor - 1}}{1-2\sin(\pi/(2n))}.\]

    We find that in fact,
    \[(1 + h_1 + h_{\lfloor n/2 \rfloor}) - \frac{h_{\lfloor n/2 \rfloor - 1}}{1-2\sin(\pi/(2n))} = \frac{1}{2 \sin \left(\frac{\pi }{2 n}\right)-1}+2 \cos \left(\frac{\pi }{n}\right)+1\geq 0,\]
    meaning that our inequality becomes
    \[a < \frac{h_{\lfloor n/2 \rfloor - 1}}{1-2\sin(\pi/(2n))} \leq 1 + h_1 + h_{\lfloor n/2 \rfloor}.\] 
    
    Thus for all $n > 3$, we have that $a < 1 + h_1 + h_{\lfloor n/2 \rfloor}$ for odd $n$, and $a < 1 + h_1 + h_{\lfloor n/2 \rfloor - 1}$ for even $n$; this is our bound $q$ for Condition 1 in this case.

By Lemma \ref{lem:order}, we note that the vectors $\nu_j$ for all $j < \lfloor n/2 \rfloor - 1$ are steeper than $\nu_{\lfloor n/2 \rfloor - 1}$, meaning that they all violate Condition 2. Moreover, we know from the same lemma that all $\sigma$ vectors fail Condition 3, because we are comparing to a $\nu$ vector.
    
\textbf{Subcase 3.1.} We begin with saddle connections staying in the staircase. 

\textbf{Subsubcase 3.1.1.} Consider saddle connections staying in the staircase linking any vertex to a vertex $L_j$ for any $j$. Since these saddle connections stay entirely within the staircase, we note that they are necessarily steeper than the vector $\nu_j$ joining $L_{j-1}$ and $L_j$ (they enter the rectangle containing $L_{j-1}$ and $L_j$ at a point farther to the right than $L_{j-1}$). This means they violate Condition 2 since $\nu_j$ is at least as steep as $\nu_{\lfloor n/2 \rfloor - 1}$. 
  

\textbf{Subsubcase 3.1.2.} We proceed to saddle connections ending at $R_k$ for any $m$. Condition 3 is violated, as this vector necessarily has larger slope and lower $a$ value than $\sigma_k$. 


\textbf{Subcase 3.2.} This rules out all saddle connections that do not pass through the edges of the staircase, so we now proceed to saddle connections that do pass through the edges of the staircase. 
    
\textbf{Subsubcase 3.2.1.} Any saddle connection which first passes through a horizontal edge to the left of some $L_k$ is necessarily steeper than $\nu_{k-1}$, violating Condition 2. 

    
\textbf{Subsubcase 3.2.2.} If a saddle connection first passes through the vertical edge below some $R_k$ for $k > 0$, then this vector necessarily has larger slope and lower $a$ value than $\sigma_k$, meaning that it violates Condition 3.

    
\textbf{Subsubcase 3.2.3.} The saddle connections that first pass through the horizontal edge between $L_0$ and $R_0$ will now be eliminated. The only possible starting vertex for such a saddle connection is $L_0$; in this case, the slope of the vector is higher than 1, meaning Condition 2 is violated.
Note that all other saddle connections which pass through the edge between $L_0$ and $R_0$ first pass through a different edge, and these have been eliminated in previous cases.

    
\textbf{Subsubcase 3.2.4.} If $n$ is even, we must eliminate the vectors passing through the vertical edge below $L_{n/2}$. Notice that such a vector necessarily crosses $h_{n/2 - 1}$ at least twice. Thus, such a vector cannot travel more than $2$ horizontal distances or else it would violate Condition 1. In fact, such a vector must travel a horizontal distance of exactly $2h_{n/2- 1}$. 
    
We first suppose that $n \geq 6$. If we show that $h_{\lfloor n/2 \rfloor - 1} \geq 1 + h_1$, then this vector would violate Condition 1. We compute
    \[h_{\lfloor n/2 \rfloor - 1} - 1 - h_1 = -2 \cos \left(\frac{\pi }{n}\right)+\cot \left(\frac{\pi }{2 n}\right)-2.\] The derivative of this function is \[\frac{\pi  \left(\csc ^2\left(\frac{\pi }{2 n}\right)-4 \sin \left(\frac{\pi
   }{n}\right)\right)}{2 n^2},\] which is positive for $n \geq 3$. At $n = 6$, this difference evaluates to $0$, meaning that the difference is indeed nonnegative for all $n \geq 6$. In other words, Condition 1 is violated for $n \geq 6$, leaving us with the case $n = 4$.
   
Now consider the case $n = 4$. In this case, $\cvec{a}{b}$ must begin at $R_0$ or $L_1$ and end at either $R_1$ or $L_2$.


If $\cvec{a}{b}$ begins at $R_0$, it must pass through the vertical face below $L_2$ and either (a) end at $L_2$, or (b) pass through the horizontal face to the left of $L_2$ and end at $R_1$. In case (a), the vector is $(2h_1, 1 + v_1)$, which violates Condition 3:
   \[a - b(h_1 - 1) = 2 + 2v_1 - (1 + v_1)v_1 = 2 + v_1 - v_1^2 = \sqrt{2} > 1\]
   In case (b), the vector is $\cvec{2h_1}{2 + v_1}$, which has slope $\frac{1 + v_1/2}{h_1} > \frac{v_1}{h_1}$, which violates Condition 2.
   If $\cvec{a}{b}$ instead begins at $L_1$, then it must pass through the vertical edge below $L_2$, and either (c) end at $L_2$ or (d) pass through the horizontal edge to the left of $L_2$ and end at $R_1$.
   In case (c), the vector is $\cvec{2h_1}{v_1}$ which has smaller slope and larger $a$ than $\sigma_1 = \cvec{h_1}{1}$, so it violates Condition 3.
   The vector in case (d) is $\cvec{2h_1}{1 + v_1}$, which we already eliminated. This concludes the proof for $n = 4$ and hence for the case where $n$ is even.

\textbf{Subsubcase 3.2.5.} Finally, we will eliminate the saddle connections passing through the horizontal edge to the right of $L_{\lfloor n/2 \rfloor}$, which forms an edge of the staircase when $n$ is odd and greater than 3.


Such a saddle connection can travel no more than $2$ horizontal distances, otherwise it will violate Condition 1 (since one of these horizontal distances must be $h_{\lfloor n/2 \rfloor}$.
So, letting $k = \lfloor n/2 \rfloor - 1$, a candidate saddle connection must start at $L_k$, $R_{k - 1}$ or $R_k$.
    
If $\cvec{a}{b}$ begins at $L_k$, we observe that in order to avoid violating Condition 3, it must end at $R_{k + 1}$. The vector must therefore be $(h_k + h_{k + 1}, 2v_k)$. For $n > 5$, this violates Condition 1, as $h_1 + 1 \leq h_{\lfloor n/2 \rfloor}$:
    \[h_{\lfloor n/2 \rfloor} - h_1 - 1 = \left(\sin \left(\frac{\pi }{2 n}\right)+\sin \left(\frac{3 \pi }{2 n}\right)-1\right)
   \left(-\csc \left(\frac{\pi }{2 n}\right)\right) > 0.\]
   For $n = 5$, this vector violates Condition 3:
   \[a - b \bigg( \frac{h_{k + 1} - 1}{v_k} \bigg) = h_k + h_{k + 1} - 2v_k \bigg( \frac{h_{k + 1} - 1}{v_k} \bigg) = 2 + h_k - h_{k + 1} \geq 1\]
    
    Now suppose $\cvec{a}{b}$ begins at $R_{k - 1}$.
    The only way for $\cvec{a}{b}$ to avoid violating Condition 1 is by ending at $R_{k + 1}$ after passing through the horizontal edge at least once.
    For $n > 5$, this violates Condition 1, as $h_1 + 1 \leq h_{\lfloor n/2 \rfloor}$.
   For $n \leq 5$, consider the vector crossing the edge once, $(h_k + h_{k + 1}, v_{k - 1} + 2v_k)$.
   For $n = 5$ this vector has slope greater than $v_k/h_k$, violating Condition 2.

   Since any $\cvec{a}{b}$ crossing the edge more than once will have even larger slope, this case also violates Condition 2.
    
Finally, suppose that $\cvec{a}{b}$ begins at $R_k$. Then, it must end at $R_{k + 1}$, $L_{k + 1}$, or $R_k$ or else it would violate Condition 1. If it ends at $R_{k + 1}$, then because the lower bound of $1$ is the slope when the vector crosses the edge exactly once, this vector has slope at least $1$, which violates Condition 2.
    
The only possible $\cvec{a}{b}$ ending at $L_{k + 1}$ in this case is the vector passing once through the horizontal edge to the left of $R_{k + 1}$, then through the vertical edge below $R_{k + 1}$, before ending at $L_{k + 1}$. This is the same vector as the saddle connection beginning at $L_{\lfloor n/2 \rfloor - 1}$, then passing through the horizontal edge to the right of $L_{k + 1}$, and finally ending at $R_{k + 1}$, which we have eliminated earlier.
    
The only other possible vector that does not travel horizontally across more than two rectangles starts at $R_k$, passes through the horizontal edge to the right of $L_{k + 1}$, then passes through the vertical edge below $R_{k + 1}$, then passes through the horizontal edge to the left of $L_{k + 1}$, and finally ends at $R_k$. This is the same vector as the one beginning at $R_{k - 1}$ and ending at $R_{k + 1}$ discussed earlier, which we have eliminated. This completes this case for all $n > 3$.

\textbf{Subcase 3.3.} Finally, suppose $n = 3$. We directly check that no other vectors win in the region corresponding to $\nu_{\lfloor n/2 \rfloor - 1}$. In this case, $h_{\lfloor n/2 \rfloor - 1} = h_0 = 1$, $v_{\lfloor n/2 \rfloor - 1} = v_0 = 1$, $y > -1$, and $h_1 = 2$. Suppose that a vector $\cvec{a}{b}$ wins. Then, we must have that $a + by \leq 1$. As $y > -1$ in this region, this gives us $a - b < 1$, or equivalently, $a < b + 1$. We must also have that $a > b$, or else the slope of the vector wouldn't be less than that of $\nu_{\lfloor n/2 \rfloor - 1}$. Thus, $b < a < b + 1$, which is a contradiction since all lengths and heights in the staircase when $n = 3$ are integers.
    
Thus we have shown that $\nu_{\lfloor n/2 \rfloor - 1}$ wins for all $(1, y) \in \Omega_1$ with $\frac{1 - h_{\lceil n/2 \rceil - 1}}{v_{\lceil n/2 \rceil - 2}} < y \leq \frac{1 - h_{\lfloor n/2 \rfloor - 1}}{v_{\lfloor n/2 \rfloor - 1}}$, which is simply $P_{\lfloor n/2 \rfloor + 1} \cap \{x = 1\}$.

%
%
%
%

\textbf{Case 4.} We now prove that the vectors $\sigma_i$ win on $P_{n + 1 - i} \cap \{x = 1\}$. Begin by fixing $i$.

We first compute a bound $q$ for Condition 1 in this case. In order for the image of some $\cvec{a}{b}$ to have horizontal length at most $1$, we must have $a + by \leq 1$. We also have $a \geq 1$ and $b \geq 1$, as these are the shortest horizontal and vertical distances on the $S'$, respectively. Moreover, we must have that $b/a < v_{i-1}/h_i$, as this is the slope of the candidate vector for $P_i$. Finally, we have that $y > \frac{1 - h_{i-1}}{v_{i-2}}$. Combining these facts, we find
    \begin{align*}
        a \leq 1 - by < 1 - \frac{v_{i-1}}{h_i}ay < 1 - \frac{v_{i-1}}{h_i}\left(\frac{1-h_{i-1}}{v_{i-2}}\right)a \implies \\[5pt]
        a\left(1 + \frac{v_{i-1}(1-h_{i-1})}{h_iv_{i-2}}\right) < 1 \implies \\ a\left(h_i + \frac{v_{i-1}(1-h_{i-1})}{v_{i-2}}\right) = a\left(\frac{v_{i-1}-1}{v_{i-2}}\right) < h_i \implies \\[5pt]
        a < h_i\left(\frac{v_{i-2}}{v_{i-1}-1}\right) = 1 + h_{i-1} + \frac{v_{i-2}}{v_{i-1}-1} < 1 + h_{i-1} + \frac{v_i}{v_{i-1}-1}.
    \end{align*}
    In Case 2, we showed
    \[\frac{v_i}{v_{i-1}-1} < h_1,\]
    so we can conclude that
    \[a < 1 + h_1 + h_{i-1}\]
    This gives us the desired bound $q$ for Condition 1.

By Lemma \ref{lem:order}, we note that vectors $\nu_k$ for all $k$ and $\sigma_k$ with $k < i$ are steeper than $\sigma_i$, violating Condition 2.
Moreover, Lemma \ref{lem:forder} implies that all $\sigma_k$ with $k > i$  violate Condition 3. 
    
\textbf{Subcase 4.1.} We begin with saddle connections staying in the staircase.

\textbf{Subsubcase 4.1.1.} First consider any saddle connection staying in the staircase that ends at a vertex $L_k$ for any $k$. Since these saddle connections stay entirely within the staircase, we note that they are necessarily steeper than the vector $\nu_k$ joining $L_{k-1}$ and $L_k$ (they enter the rectangle containing $L_{k-1}$ and $L_k$ at a point farther to the right than $L_{k-1}$). This means they violate Condition 2 as $\nu_k$ violates Condition 2. This can be seen in Figure \ref{fig:end_at_l_k_case_4}.

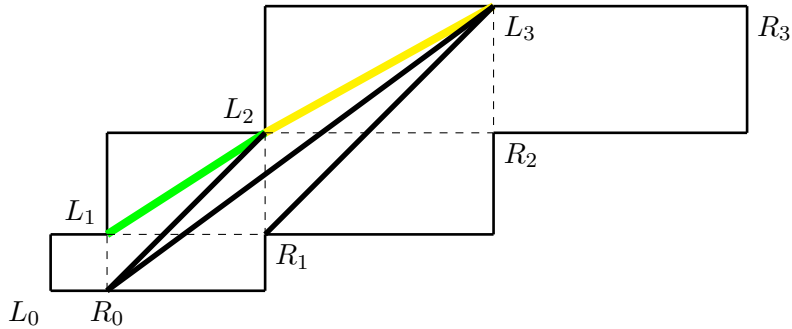
\begin{figure}[h!]
    \centering
\begin{tikzpicture}[scale=0.75]
\coordinate[label=below left:$L_0$] (a) at (0,0);
\coordinate (b) at (0,1);
\coordinate[label=below:$R_0$] (c) at (1,0);
\coordinate[label=above left:$L_1$] (d) at (1,1);
\coordinate (e) at (3.801937735804838,0);
\coordinate[label=below right:$R_1$] (f) at (3.801937735804838,1);
\coordinate (g) at (1,2.801937735804838);
\coordinate[label=above left:$L_2$] (h) at (3.801937735804838,2.801937735804838);
\coordinate[label=below right:$R_2$] (i) at (7.850855075327144,2.801937735804838);
\coordinate (j) at (7.850855075327144,1);
\coordinate[label=below right:$L_3$] (k) at (7.850855075327144,5.048917339522305);
\coordinate (l) at (3.801937735804838,5.048917339522305);
\coordinate[label=below right:$R_3$] (m) at (12.344814282762078,5.048917339522305);
\coordinate (n) at (12.344814282762078,2.801937735804838);

\draw [line width=1pt] (a) -- (b) node at (-0.4, 0.5) {};
\draw [line width=1pt] (b)-- (d);
\draw [line width=1pt] (a)-- (c) node at (0.5, -0.4) {};
\draw [line width=1pt] (c)-- (e) node at (2.4, -0.4) {};
\draw [line width=1pt] (e)-- (f);
\draw [line width=1pt] (f)-- (j) node at (5.825, 0.6) {};
\draw [line width=1pt] (j)-- (i);
\draw [line width=1pt] (i)-- (n) node at (10.1, 2.4) {};
\draw [line width=1pt] (n)-- (m);
\draw [line width=1pt] (m)-- (k);
\draw [line width=1pt] (k)-- (l);
\draw [line width=1pt] (l)-- (h) node at (3.4, 3.92) {};
\draw [line width=1pt] (h)-- (g);
\draw [line width=1pt] (g)-- (d) node at (0.6, 1.9) {};
\draw [line width=3pt, color=yellow] (h)--(k) node[midway, above, color=black] {};
\draw [line width=3pt, color=green] (d)--(h);
\draw [line width=2pt, color=black] (c)--(h);
\draw [line width=2pt, color=black] (c)--(k);
\draw [line width=2pt, color=black] (f)--(k);
\draw [dashed] (c) -- (d);
\draw[dashed] (d) -- (f);
\draw [dashed] (h) -- (f);
\draw[dashed] (h) -- (i);
\draw [dashed] (k)-- (i);
\end{tikzpicture}
    \caption{Vectors ending at $L_k$ for any $k$ (shown in black here) are necessarily steeper than the $\nu$ vector that ends at $L_k$ (shown in green or yellow), which is in turn steeper than every $\sigma$ vector.}
    \label{fig:end_at_l_k_case_4}
\end{figure}

\textbf{Subsubcase 4.1.2.} We now eliminate vectors staying in the staircase linking $R_j$ and $R_k$ (with $k > j + 1$). If $k < i$, then Condition 3 is violated, as this vector is necessarily shallower and longer than the vector $\sigma_k$ which also ends at $R_k$ as seen in Figure \ref{fig:end_at_r_k_shallow_case_4}. On the other hand, if $j \geq i$, then this saddle connection is necessarily steeper than the saddle connection $\sigma_{j + 1}$ starting at $R_j$ and ending at $R_{j+1}$, as seen in Figure \ref{fig:end_at_r_k_steep_case_4}, meaning that these vectors violate Condition 2.

\begin{figure}[h!]
    \centering
\begin{tikzpicture}[scale=0.75]
\coordinate[label=below left:$L_0$] (a) at (0,0);
\coordinate (b) at (0,1);
\coordinate[label=below:$R_0$] (c) at (1,0);
\coordinate[label=above left:$L_1$] (d) at (1,1);
\coordinate (e) at (3.801937735804838,0);
\coordinate[label=below right:$R_1$] (f) at (3.801937735804838,1);
\coordinate (g) at (1,2.801937735804838);
\coordinate[label=above left:$L_2$] (h) at (3.801937735804838,2.801937735804838);
\coordinate[label=below right:$R_2$] (i) at (7.850855075327144,2.801937735804838);
\coordinate (j) at (7.850855075327144,1);
\coordinate[label=below right:$L_3$] (k) at (7.850855075327144,5.048917339522305);
\coordinate (l) at (3.801937735804838,5.048917339522305);
\coordinate[label=below right:$R_3$] (m) at (12.344814282762078,5.048917339522305);
\coordinate (n) at (12.344814282762078,2.801937735804838);

\draw [line width=1pt] (a) -- (b) node at (-0.4, 0.5) {};
\draw [line width=1pt] (b)-- (d);
\draw [line width=1pt] (a)-- (c) node at (0.5, -0.4) {};
\draw [line width=1pt] (c)-- (e) node at (2.4, -0.4) {};
\draw [line width=1pt] (e)-- (f);
\draw [line width=1pt] (f)-- (j) node at (5.825, 0.6) {};
\draw [line width=1pt] (j)-- (i);
\draw [line width=1pt] (i)-- (n) node at (10.1, 2.4) {};
\draw [line width=1pt] (n)-- (m);
\draw [line width=1pt] (m)-- (k);
\draw [line width=1pt] (k)-- (l);
\draw [line width=1pt] (l)-- (h) node at (3.4, 3.92) {};
\draw [line width=1pt] (h)-- (g);
\draw [line width=1pt] (g)-- (d) node at (0.6, 1.9) {};
\draw [line width=3pt, color=orange] (f)--(i) node[midway, below, color=black] {$\sigma_2$};
\draw [line width=2pt, color=black] (c)--(i);
\draw [dashed] (c) -- (d);
\draw[dashed] (d) -- (f);
\draw [dashed] (h) -- (f);
\draw[dashed] (h) -- (i);
\draw [dashed] (k)-- (i);
\end{tikzpicture}
    \caption{Let $n = 7$ and $i = 3$ (so the vector we are trying to show victory for is $\sigma_3$). Vectors ending at $R_k$ for any $k < i$ (shown in black here) are necessarily shallower and longer than the $\sigma$ vector that ends at $R_k$ (in orange), which in turn violates Condition 3.}
    \label{fig:end_at_r_k_shallow_case_4}
\end{figure}
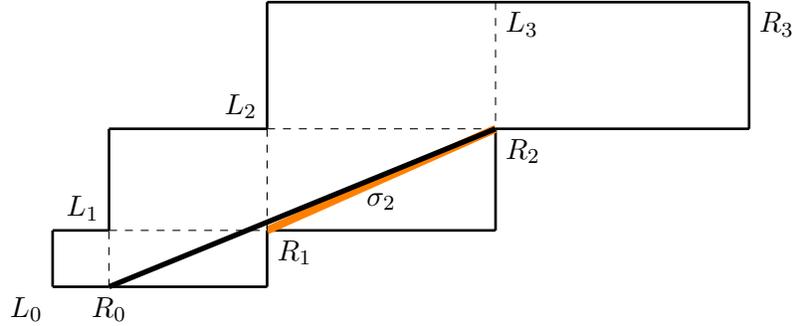

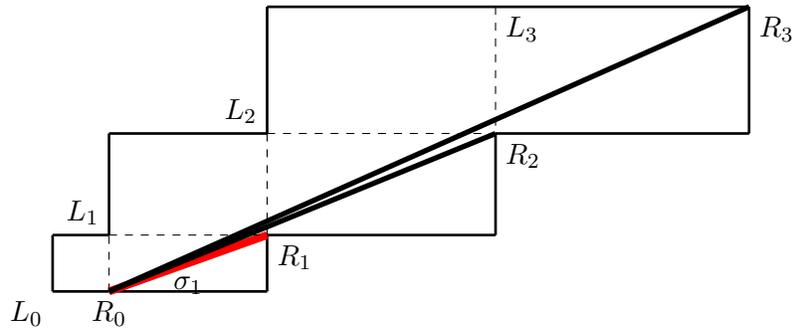
\begin{figure}[h!]
    \centering
\begin{tikzpicture}[scale=0.75]
\coordinate[label=below left:$L_0$] (a) at (0,0);
\coordinate (b) at (0,1);
\coordinate[label=below:$R_0$] (c) at (1,0);
\coordinate[label=above left:$L_1$] (d) at (1,1);
\coordinate (e) at (3.801937735804838,0);
\coordinate[label=below right:$R_1$] (f) at (3.801937735804838,1);
\coordinate (g) at (1,2.801937735804838);
\coordinate[label=above left:$L_2$] (h) at (3.801937735804838,2.801937735804838);
\coordinate[label=below right:$R_2$] (i) at (7.850855075327144,2.801937735804838);
\coordinate (j) at (7.850855075327144,1);
\coordinate[label=below right:$L_3$] (k) at (7.850855075327144,5.048917339522305);
\coordinate (l) at (3.801937735804838,5.048917339522305);
\coordinate[label=below right:$R_3$] (m) at (12.344814282762078,5.048917339522305);
\coordinate (n) at (12.344814282762078,2.801937735804838);

\draw [line width=1pt] (a) -- (b) node at (-0.4, 0.5) {};
\draw [line width=1pt] (b)-- (d);
\draw [line width=1pt] (a)-- (c) node at (0.5, -0.4) {};
\draw [line width=1pt] (c)-- (e) node at (2.4, -0.4) {};
\draw [line width=1pt] (e)-- (f);
\draw [line width=1pt] (f)-- (j) node at (5.825, 0.6) {};
\draw [line width=1pt] (j)-- (i);
\draw [line width=1pt] (i)-- (n) node at (10.1, 2.4) {};
\draw [line width=1pt] (n)-- (m);
\draw [line width=1pt] (m)-- (k);
\draw [line width=1pt] (k)-- (l);
\draw [line width=1pt] (l)-- (h) node at (3.4, 3.92) {};
\draw [line width=1pt] (h)-- (g);
\draw [line width=1pt] (g)-- (d) node at (0.6, 1.9) {};
\draw [line width=3pt, color=red] (c)--(f) node[midway, below, color=black] {$\sigma_1$};
\draw [line width=2pt, color=black] (c)--(i);
\draw [line width=2pt, color=black] (c)--(m);
\draw [dashed] (c) -- (d);
\draw[dashed] (d) -- (f);
\draw [dashed] (h) -- (f);
\draw[dashed] (h) -- (i);
\draw [dashed] (k)-- (i);
\end{tikzpicture}
    \caption{Let $n = 7$ and $i = 2$ (so the vector we are trying to show victory for is $\sigma_2$). Vectors starting at $R_j$ for any $j \leq i$ (shown in black here) are necessarily steeper than the $\sigma$ vector that starts at $R_j$ (in red), which in turn violates Condition 2.}
    \label{fig:end_at_r_k_steep_case_4}
\end{figure}

Finally, we check the case where $j < i$ and $k \geq i$. If $k - j \geq 3$, then this saddle connection must travel through at least $h_k$ and two other horizontal lengths, which violates Condition 1 (as one of these lengths must be at least $1$ and the other length, being distinct from the first length, must be at least $h_1$).
This implies that $k - j = 2$, so the only cases we need to consider are $k = i, j = i - 2$ and $k = i+1, j = i-1$. In the latter case, the vector is steeper than $\sigma_i$ (it begins at the same vertex as $\sigma_i$ but travels higher than $R_i$), violating Condition 2. 

This leaves us with the former case. We show that in this case, $f\cvec{a}{b} > 1$, violating Condition 3. We have that \begin{align*}h_{i-1} + h_i - (v_{i-1} + v_{i-2})\left(\frac{h_{i-1}-1}{v_{i-2}}\right) &=
    v_{i-2} + 2v_{i-1} + v_i - (v_{i-1} + v_{i-2})\left(\frac{v_{i-2} + v_{i-1}-1}{v_{i-2}}\right) \\[10pt] &= v_{i-2} + 2v_{i-1} + v_i - v_{i-2} - v_{i-1} + 1 - \frac{v_{i-2}v_{i-1}+v_{i-1}^2 - v_{i-1}}{v_{i-2}}\\[10pt] &=
    \frac{v_{i-2}v_{i-1}+v_{i-2}v_i+v_{i-2}-v_{i-2}v_{i-1}-v_{i-1}^2+v_{i-1}}{v_{i-2}}\\[10pt] &= \frac{-1+v_{i-1}+v_{i-2}}{v_{i-2}}=1+\frac{v_{i-1}-1}{v_{i-2}} > 1,\end{align*} as we know that $i \geq 2$, meaning $v_{i-1} > 1$.
Thus we have eliminated all vectors connecting some $R_j$ to some $R_k$ without passing through an edge of $S'$.
    
\textbf{Subsubcase 4.1.3.}  We now eliminate the vectors starting at $L_j$ and ending at $R_k$ for $k \geq j+1$. If $k < i$, then Condition 3 is violated as this vector is necessarily shallower and longer than $\sigma_k$, which also ends at $R_k$ (which we noted earlier violates Condition 3), as seen in Figure \ref{fig:start_at_l_k_end_at_r_k_shallow_case_4}.

\begin{figure}[h!]
    \centering
\begin{tikzpicture}[scale=0.75]
\coordinate[label=below left:$L_0$] (a) at (0,0);
\coordinate (b) at (0,1);
\coordinate[label=below:$R_0$] (c) at (1,0);
\coordinate[label=above left:$L_1$] (d) at (1,1);
\coordinate (e) at (3.801937735804838,0);
\coordinate[label=below right:$R_1$] (f) at (3.801937735804838,1);
\coordinate (g) at (1,2.801937735804838);
\coordinate[label=above left:$L_2$] (h) at (3.801937735804838,2.801937735804838);
\coordinate[label=below right:$R_2$] (i) at (7.850855075327144,2.801937735804838);
\coordinate (j) at (7.850855075327144,1);
\coordinate[label=below right:$L_3$] (k) at (7.850855075327144,5.048917339522305);
\coordinate (l) at (3.801937735804838,5.048917339522305);
\coordinate[label=below right:$R_3$] (m) at (12.344814282762078,5.048917339522305);
\coordinate (n) at (12.344814282762078,2.801937735804838);

\draw [line width=1pt] (a) -- (b) node at (-0.4, 0.5) {};
\draw [line width=1pt] (b)-- (d);
\draw [line width=1pt] (a)-- (c) node at (0.5, -0.4) {};
\draw [line width=1pt] (c)-- (e) node at (2.4, -0.4) {};
\draw [line width=1pt] (e)-- (f);
\draw [line width=1pt] (f)-- (j) node at (5.825, 0.6) {};
\draw [line width=1pt] (j)-- (i);
\draw [line width=1pt] (i)-- (n) node at (10.1, 2.4) {};
\draw [line width=1pt] (n)-- (m);
\draw [line width=1pt] (m)-- (k);
\draw [line width=1pt] (k)-- (l);
\draw [line width=1pt] (l)-- (h) node at (3.4, 3.92) {};
\draw [line width=1pt] (h)-- (g);
\draw [line width=1pt] (g)-- (d) node at (0.6, 1.9) {};
\draw [line width=3pt, color=orange] (f)--(i) node[midway, below, color=black] {$\sigma_2$};
\draw [line width=2pt, color=black] (a)--(i);
\draw [line width=2pt, color=black] (d)--(i);
\draw [dashed] (c) -- (d);
\draw[dashed] (d) -- (f);
\draw [dashed] (h) -- (f);
\draw[dashed] (h) -- (i);
\draw [dashed] (k)-- (i);
\end{tikzpicture}
    \caption{Let $n = 7$ and $i = 3$ (so the vector we are trying to show victory for is $\sigma_3$). Vectors starting at some $L_j$ and ending at $R_k$ for any $k < i$ (shown in black here) are necessarily shallower and longer than the $\sigma$ vector that ends at $R_k$ (in orange), which in turn violates Condition 3.}
    \label{fig:start_at_l_k_end_at_r_k_shallow_case_4}
\end{figure}
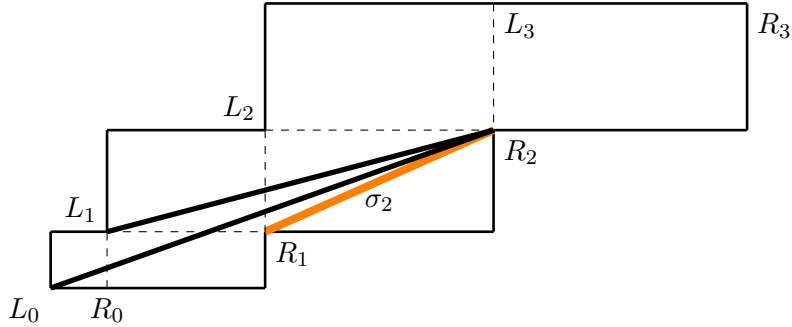

Suppose that $k \geq i$. If $k - j \geq 2$, this saddle connection violates Condition 3 (as it encompasses at least three horizontal distances on the staircase, of which one is at least as long as the length $h_i$, and the other two of which are distinct). Thus, we must have $k - j = 1$, in which case the slope of the saddle connection is precisely the aspect ratio $2 + 2\cos(\pi/n)$ (it will be the diagonal of some rectangle, as seen in Figure \ref{fig:weird_aspect_ratio}), which fails Condition 4. This rules out all vectors that do not pass through the edges of the staircase.

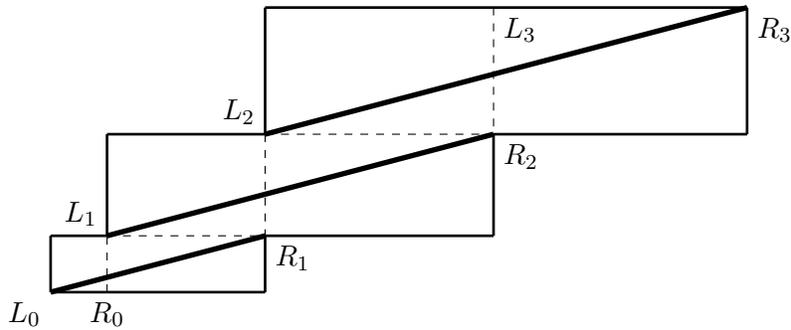
\begin{figure}[h!]
    \centering
\begin{tikzpicture}[scale=0.75]
\coordinate[label=below left:$L_0$] (a) at (0,0);
\coordinate (b) at (0,1);
\coordinate[label=below:$R_0$] (c) at (1,0);
\coordinate[label=above left:$L_1$] (d) at (1,1);
\coordinate (e) at (3.801937735804838,0);
\coordinate[label=below right:$R_1$] (f) at (3.801937735804838,1);
\coordinate (g) at (1,2.801937735804838);
\coordinate[label=above left:$L_2$] (h) at (3.801937735804838,2.801937735804838);
\coordinate[label=below right:$R_2$] (i) at (7.850855075327144,2.801937735804838);
\coordinate (j) at (7.850855075327144,1);
\coordinate[label=below right:$L_3$] (k) at (7.850855075327144,5.048917339522305);
\coordinate (l) at (3.801937735804838,5.048917339522305);
\coordinate[label=below right:$R_3$] (m) at (12.344814282762078,5.048917339522305);
\coordinate (n) at (12.344814282762078,2.801937735804838);

\draw [line width=1pt] (a) -- (b) node at (-0.4, 0.5) {};
\draw [line width=1pt] (b)-- (d);
\draw [line width=1pt] (a)-- (c) node at (0.5, -0.4) {};
\draw [line width=1pt] (c)-- (e) node at (2.4, -0.4) {};
\draw [line width=1pt] (e)-- (f);
\draw [line width=1pt] (f)-- (j) node at (5.825, 0.6) {};
\draw [line width=1pt] (j)-- (i);
\draw [line width=1pt] (i)-- (n) node at (10.1, 2.4) {};
\draw [line width=1pt] (n)-- (m);
\draw [line width=1pt] (m)-- (k);
\draw [line width=1pt] (k)-- (l);
\draw [line width=1pt] (l)-- (h) node at (3.4, 3.92) {};
\draw [line width=1pt] (h)-- (g);
\draw [line width=1pt] (g)-- (d) node at (0.6, 1.9) {};
\draw [line width=2pt, color=black] (a)--(f);
\draw [line width=2pt, color=black] (d)--(i);
\draw[line width = 2pt, color=black] (h)--(m);
\draw [dashed] (c) -- (d);
\draw[dashed] (d) -- (f);
\draw [dashed] (h) -- (f);
\draw[dashed] (h) -- (i);
\draw [dashed] (k)-- (i);
\end{tikzpicture}
    \caption{Let $n = 7$. Vectors starting at $L_j$ and ending at $R_{j+1}$ are precisely the diagonals of the large rectangles, which have aspect ratio equal to $2 + 2\cos(\pi/n)$.}
    \label{fig:weird_aspect_ratio}
\end{figure}

 \textbf{Subcase 4.2.}   We now proceed to vectors that do pass through the edges of the staircase.

 \textbf{Subsubcase 4.2.1.} Any of these vectors that starts by passing through the horizontal edge containing $L_k$ is necessarily steeper than the $\nu$ vector ending at $L_k$, meaning these vectors violate Condition 2, as in Figure \ref{fig:pass_through_horizontal_case_4}.

\begin{figure}[h!]
    \centering
\begin{tikzpicture}[scale=0.75]
\coordinate[label=below left:$L_0$] (a) at (0,0);
\coordinate (b) at (0,1);
\coordinate[label=below:$R_0$] (c) at (1,0);
\coordinate[label=above left:$L_1$] (d) at (1,1);
\coordinate (e) at (3.801937735804838,0);
\coordinate[label=below right:$R_1$] (f) at (3.801937735804838,1);
\coordinate (g) at (1,2.801937735804838);
\coordinate[label=above left:$L_2$] (h) at (3.801937735804838,2.801937735804838);
\coordinate[label=below right:$R_2$] (i) at (7.850855075327144,2.801937735804838);
\coordinate (j) at (7.850855075327144,1);
\coordinate[label=below right:$L_3$] (k) at (7.850855075327144,5.048917339522305);
\coordinate (l) at (3.801937735804838,5.048917339522305);
\coordinate[label=below right:$R_3$] (m) at (12.344814282762078,5.048917339522305);
\coordinate (n) at (12.344814282762078,2.801937735804838);

\draw [line width=1pt] (a) -- (b) node at (-0.4, 0.5) {};
\draw [line width=1pt] (b)-- (d);
\draw [line width=1pt] (a)-- (c) node at (0.5, -0.4) {};
\draw [line width=1pt] (c)-- (e) node at (2.4, -0.4) {};
\draw [line width=1pt] (e)-- (f);
\draw [line width=1pt] (f)-- (j) node at (5.825, 0.6) {};
\draw [line width=1pt] (j)-- (i);
\draw [line width=1pt] (i)-- (n) node at (10.1, 2.4) {};
\draw [line width=1pt] (n)-- (m);
\draw [line width=1pt] (m)-- (k);
\draw [line width=1pt] (k)-- (l);
\draw [line width=1pt] (l)-- (h) node at (3.4, 3.92) {};
\draw [line width=1pt] (h)-- (g) coordinate[midway] (z);
\draw [line width=1pt] (g)-- (d) node at (0.6, 1.9) {};
\draw [line width=3pt, color=green] (d)--(h);
\draw [line width=2pt, color=black] (c)--(z);
\draw [line width=2pt, color=black] (d)--(z);
\draw [dashed] (c) -- (d);
\draw[dashed] (d) -- (f);
\draw [dashed] (h) -- (f);
\draw[dashed] (h) -- (i);
\draw [dashed] (k)-- (i);
\end{tikzpicture}
    \caption{For instance, any vector passing through the side of length $h_1$ is steeper than $\nu_1$ (green vector), which in turn is steeper than every $\sigma$ vector.}
    \label{fig:pass_through_horizontal_case_4}
\end{figure}
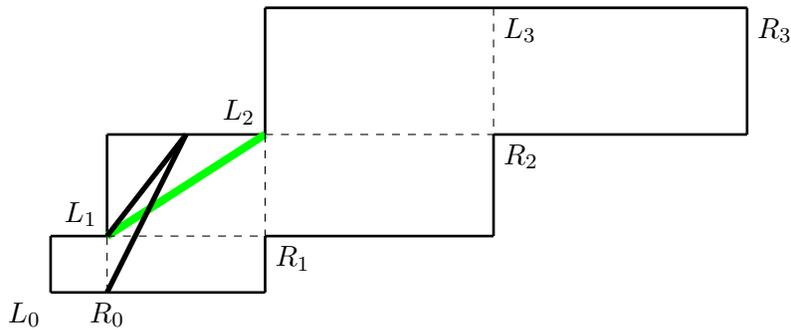
    
 \textbf{Subsubcase 4.2.2.}   Now consider a vector that starts by passing through a right wall. Suppose the vector started at $L_j$ and passed the vertical edge below $R_{j+1}$. Then, this vector would be shallower than the aspect ratio of the rectangle and can thus be eliminated on account of failing Condition 4. If the vector starts at $R_k$ or $L_{k}$ with $k > i$ and travels across more than three horizontal distances, then it violates Condition 1, as one of the horizontal distances it traverses is at least as long as $h_i$. 
    
    Now suppose that the saddle connection first crosses the right edge below $R_k$ for some $k < i$. Then, this saddle connection is longer and shallower than the $\sigma$ vector in that rectangle, meaning that it violates Condition 3, as in Figure \ref{fig:pass_through_vertical_case_4}.

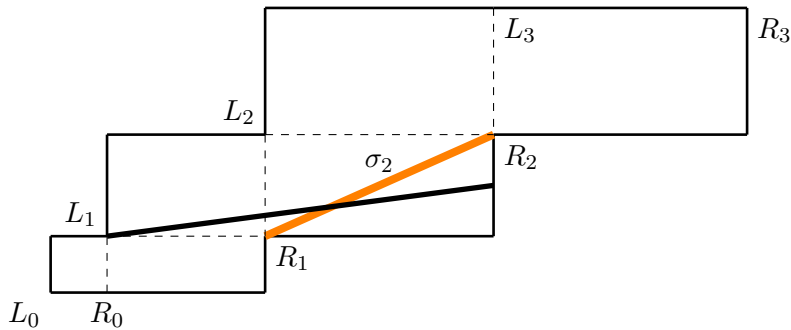
\begin{figure}[h!]
    \centering
\begin{tikzpicture}[scale=0.75]
\coordinate[label=below left:$L_0$] (a) at (0,0);
\coordinate (b) at (0,1);
\coordinate[label=below:$R_0$] (c) at (1,0);
\coordinate[label=above left:$L_1$] (d) at (1,1);
\coordinate (e) at (3.801937735804838,0);
\coordinate[label=below right:$R_1$] (f) at (3.801937735804838,1);
\coordinate (g) at (1,2.801937735804838);
\coordinate[label=above left:$L_2$] (h) at (3.801937735804838,2.801937735804838);
\coordinate[label=below right:$R_2$] (i) at (7.850855075327144,2.801937735804838);
\coordinate (j) at (7.850855075327144,1);
\coordinate[label=below right:$L_3$] (k) at (7.850855075327144,5.048917339522305);
\coordinate (l) at (3.801937735804838,5.048917339522305);
\coordinate[label=below right:$R_3$] (m) at (12.344814282762078,5.048917339522305);
\coordinate (n) at (12.344814282762078,2.801937735804838);

\draw [line width=1pt] (a) -- (b) node at (-0.4, 0.5) {};
\draw [line width=1pt] (b)-- (d);
\draw [line width=1pt] (a)-- (c) node at (0.5, -0.4) {};
\draw [line width=1pt] (c)-- (e) node at (2.4, -0.4) {};
\draw [line width=1pt] (e)-- (f);
\draw [line width=1pt] (f)-- (j) node at (5.825, 0.6) {};
\draw [line width=1pt] (j)-- (i) coordinate[midway] (z) {};
\draw [line width=1pt] (i)-- (n) node at (10.1, 2.4) {};
\draw [line width=1pt] (n)-- (m);
\draw [line width=1pt] (m)-- (k);
\draw [line width=1pt] (k)-- (l);
\draw [line width=1pt] (l)-- (h) node at (3.4, 3.92) {};
\draw [line width=1pt] (h)-- (g);
\draw [line width=1pt] (g)-- (d) node at (0.6, 1.9) {};
\draw [line width=3pt, color=orange] (f)--(i) node[midway, above, color=black] {$\sigma_2$};
\draw [line width=2pt, color=black] (d)--(z);
\draw [dashed] (c) -- (d);
\draw[dashed] (d) -- (f);
\draw [dashed] (h) -- (f);
\draw[dashed] (h) -- (i);
\draw [dashed] (k)-- (i);
\end{tikzpicture}
    \caption{Fix $i = 3$. For instance, we see that any vector passing through $v_1$ is longer and shallower than $\sigma_2$, which fails Condition 3.}
    \label{fig:pass_through_vertical_case_4}
\end{figure}

Proceed to the case of saddle connections that first cross the right edge below $R_k$ for some $i \leq k$.
Then, if the vector originates from some $L_j$, we know that the vector must have originated at $L_{k-2}$ or before, or else it would have slope shallower than the aspect ratio.
However, this vector must cross at least three horizontal distances and therefore violates Condition 1.
Thus, in this case, the vector must originate from some $R_j$.
In particular, to avoid violating Condition 1, it must originate from $R_{k-1}$ or $R_{k-2}$.
However, in either case, the vector must pass through the horizontal edge to the left of $L_k$ after passing through the vertical edge to avoid violating Condition 4.
Thus if the vector begins at $R_{k-2}$, we have
\[a \geq h_{k-1} + h_k + h_{k-1} > 1 + h_1 + h_{i-1}\]
We deduce that Condition 1 is violated,
and we are left with vectors beginning at $R_{k-1}$.
Such a vector may only travel $2$ horizontal distances without violating Condition 1, so it must end at either $L_k$ or $R_{k-1}$ upon passing through the horizontal edge to the left of $L_k$.
If it were to end at $L_k$ it would have slope greater than 1, violating Condition 2.
The only remaining option, the vector ending at $R_{k-1}$, is the same as the vector connecting $R_{k-2}$ and $R_k$ lying within the staircase, which we have previously eliminated.

 \textbf{Subcase 4.3.}  Now let us address the special cases of vectors passing through edges at the ends of the staircase.    
    
\textbf{Subsubcase 4.3.1.}  The vectors that pass through the edge between $L_0$ and $R_0$ will first be eliminated.
Consider the point where such a vector next intersects an edge of $S'$ or terminates at a vertex after passing through the aforementioned horizontal edge.
Such a vector that proceeds to intersect a horizontal edge to the left of some $L_k$ or terminate at some $L_k$ has a larger slope than $\nu_{k-1}$, meaning it violates Condition 2.
On the other, if such a vector proceeds to intersect a vertical edge below some $R_k$ or end at some $R_k$ for $k < i$, then it has larger $a$ value and smaller slope than $\sigma_{i-1}$, meaning it violates Condition 3.
This leaves vectors which, after passing through the horizontal edge between $L_0$ and $R_0$, intersect a vertical edge below some $R_k$ with $k \geq i$ or reach the opposite end of $S'$.
In this case, we have:
\[a \geq 1 + h_1 + h_k > 1 + h_1 + h_{i-1},\]
whence Condition 1 is violated.
    
\textbf{Subsubcase 4.3.2.}  When $n$ is even, we must consider the special case of saddle connections crossing the vertical edge below $L_{n/2}$. Let $k = n/2 - 1$. Notice that such a vector necessarily crosses $h_k > h_{i-1}$ at least twice. Such a vector cannot travel more than $2$ horizontal distances or else it would violate Condition 1. 


Thus, such a vector must either start at $R_{k - 1}$ or $L_{k}$. If the vector starts at $R_{k - 1}$, then it must be steeper than $\sigma_{k - 1}$, which is at least as steep as $\sigma_i$, meaning such vectors fail Condition 2. Now, suppose that the vector starts at $L_k$. Then the vector must end at $L_{k + 1}$ or $R_k$ after traversing $h_k$ twice (steeper vectors have slope greater than 1, violating Condition 2). In the first case, the slope of the vector is at most the aspect ratio, which means it violates Condition 4. In the second case, we have:
\[b/a = \frac{v_k + v_{k-1}}{2h_k} > \frac{v_{k-1}}{h_k}\]
Thus this vector has a higher slope than $\sigma_k$, which has slope at least as steep as $\sigma_i$, meaning Condition 2 is violated.
    
\textbf{Subsubcase 4.3.3.}  We must also consider a special case when $n$ is odd.


In this case, we must eliminate saddle connections crossing the horizontal edge to the right of $L_{(n - 1)/2}$.
Let $k = (n-3)/2$.
First of all, notice that any such vector crosses the horizontal distance $h_{k+1} > h_{i-1}$, meaning that any such vector which travels horizontally across at least 3 rectangles violates Condition 2.
Furthermore, vectors which start at $R_k$ have larger slope than $\sigma_k$ and thus fail Condition 2.
Thus we are left with vectors beginning at $L_k$ and $R_{k-1}$, which must end at $R_{k+1}$ to avoid violating Condition 1.
Both of these vectors have larger slope than $\sigma_{k+1}$, violating Condition 2. 
    


Finally, note that the vectors $\sigma_i$ do indeed have positive slope $v_{i-1}/h_i$ and is such that $0 < a + by \leq 1$:
\begin{align*}a + by = h_i + v_{i-1}y &> h_i + v_{i-1} \left(\frac{1 - h_{i-1}}{v_{i-2}}\right) \\[10pt] &= \frac{v_{i-2}v_{i-1} + v_{i-2}v_i + v_{i-1} - v_{i-2}v_{i-1} - v_{i-1}^2}{v_{i-2}} \\[10pt] &= \frac{v_{i-1} - 1}{v_{i-2}} > 0.\end{align*}
On the other hand, 
\begin{align*}a + by = h_i + v_{i-1}y \leq h_i + v_{i-1}\left(\frac{1 - h_i}{v_{i-1}}\right) = 1\end{align*}
Thus we have shown that for $1 < i \leq \lceil n/2 \rceil - 1$, $\sigma_i$ wins on $P_{n-i+1} \cap \{x = 1\}$.

%
%
%
%

\textbf{Case 5.} We finally show that $\sigma_1$ wins on $P_n \cap \{x=1\}$.
    
We first establish bounds on the horizontal length of a candidate vector $\cvec{a}{b}$ if it were to win over $\sigma_1$. In order to satisfy Condition 3, we must have that $\frac{1 - a}{b} > -h_1,$ or equivalently, $a < 1 + h_1b$. Condition 2 stipulates that $a/b > h_1$, so \[h_1b < a < 1 + h_1b.\] 
    
For any $\cvec{a}{b}$ satisfying the above conditions, the endpoint of the saddle connection must be a distance of between $0$ and $1$ to the right of a line of slope $1/h_1$ passing through the starting vertex of $\cvec{a}{b}$. So let us consider the lines of slope $1/h_1$ passing through each vertex of the staircase. The line of slope $1/h_1$ beginning at $L_i$ travels within the staircase to $R_{i + 2}$, as using trigonometric identities gives us that \[\frac{(h_{i - 1} + h_i + h_{i + 1})}{h_i} = h_1.\]
    
This equation also shows that the line of slope $1/h_1$ beginning at $R_i$ with $i > 0$ passes through the vertical edge below $R_{i + 1}$, then through the horizontal edge to the left of $L_{i + 1}$, then through the vertical edge below $R_i$, before intersecting $L_i$, as this saddle connection has the same holonomy vector (and thus slope) as the one connecting $L_i$ and $R_{i + 2}$.
For $i = 0$, the line of slope $1/h_1$ beginning at $R_i$ simply connects directly to $R_1$.

Now we observe that there are no vertices with horizontal distance between $0$ and $1$ to the right of the lines of slope $1/h_1$ described above. The vertex $R_{i + 1}$ is horizontally at least distance $1$ to the right of the line from $L_i$. We show that 
\[v_ih_1 + 1 \leq h_i + h_{i + 1}. \] 
We first compute that 
\[h_i+h_{i+1} - v_ih_1 - 1 = \csc \left(\frac{\pi }{n}\right) \sin \left(\frac{\pi  (i+1)}{n}\right)-1.\]
Notice that for fixed $n$, when $i = 0$, this expression evaluates to $0$. Moreover, the partial derivative with respect to $i$ is \[\frac{\pi  \csc \left(\frac{\pi }{n}\right) \cos \left(\frac{\pi  (i+1)}{n}\right)}{n},\] which is strictly positive for $i \leq n/2 - 1$ (which is indeed true for our purposes).

Therefore, \[h_i+h_{i+1} - v_ih_1 - 1 \geq 0\] for all relevant $i$, meaning that \[v_ih_1 + 1 \leq h_i + h_{i + 1}. \] 
    
This calculation also shows the vertex $L_{i + 1}$ is horizontally at least distance $1$ to the right of the line from $R_i$.
All other vertices are horizontally at least some distance $h_j \geq 1$ to the right of the lines from each $L_i$ or $R_i$ described above.
    
The only special case we have failed to consider above is when the lines constructed above pass through the upper edge of the staircase shape.
For even $n$, the following calculation shows that the line of slope $1/h_1$ from $L_{n/2 - 2}$ passes through the vertical edge below $L_{n/2}$ once before intersecting $L_{n/2}$, as through trigonometric identities, we can verify that \[h_1(v_{n/2 - 2} + v_{n/2 - 1}) = h_{n/2 - 2} + 2h_{n/2 - 1}.\]
    
The above calculation also shows that the line of slope $1/h_1$ through $L_{n/2 - 1}$ passes through the vertical face below $L_{n/2}$, then passes through the horizontal face to the left of $L_{n/2}$, then passes through the vertical face below $R_{n/2 - 1}$, before ending at $L_{n/2 - 1}$, because this saddle connection has the same holonomy vector as the one described immediately before.
Again, notice that there are no vertices with horizontal distance between $0$ and $1$ to the right of the lines described above.
By the following calculation, the vertex $L_{n/2}$ is horizontally at least distance $1$ to the right of the line from $L_{n/2 - 1}$.
We show that \[h_1v_{n/2 - 1} + 1 \leq 2h_{n/2 - 1}\] by proving the equivalent inequality \[ 2h_{n/2 - 1} - h_1v_{n/2 - 1} - 1 \geq 0.\] Using the definitions of $h_i$ and $v_i$, we have \[2h_{n/2 - 1} - h_1v_{n/2 - 1} - 1 = \frac{1}{2} \left(\tan \left(\frac{\pi }{2 n}\right)+\cot \left(\frac{\pi }{2n}\right)-2\right).\]

Note that we must have $n > 2$. Evaluating the above expression at $n = 2$ gives us \[2h_{2/2 - 1} - h_1v_{2/2 - 1} - 1 = 0.\] Moreover, the derivative of this expression with respect to $n$ is \[\frac{\pi  \left(\csc ^2\left(\frac{\pi }{2 n}\right)-\sec ^2\left(\frac{\pi }{2n}\right)\right)}{4 n^2},\]
and since $\csc\left(\frac{\pi }{2 n}\right) \geq \sec\left(\frac{\pi }{2 n}\right)$ whenever $n \geq 2$, the derivative is positive for all $n \geq 2$, meaning that we can conclude \[ 2h_{n/2 - 1} - h_1v_{n/2 - 1} - 1 \geq 0,\] and hence the desired inequality follows.
All other vertices are horizontally at least some distance $h_j \geq 1$ to the right of the lines from each $L_i$ or $R_i$ described above, or have been addressed in the earlier cases ignoring the edges at the upper end of $S'$.
    
For odd $n$, the following calculation shows that the line of slope $1/h_1$ from $L_{\lfloor n/2 \rfloor - 1}$ passes through the horizontal face to the right of $L_{\lfloor n/2 \rfloor}$, then intersects $L_{\lfloor n/2 \rfloor}$.
Using trigonometric identities, we can verify that 
\[2h_1v_{\lfloor n/2 \rfloor - 1} = 2h_{\lfloor n/2 \rfloor - 1} + h_{\lfloor n/2 \rfloor}.\]

As before, all other vertices are horizontally at least some distance $h_j \geq 1$ to the right of the line from each $L_i$ or $R_i$, or have been addressed in the earlier cases in which the edges at the upper end of $S'$ were ignored.
    
Thus, for every vertex in the staircase, there is no $\cvec{a}{b}$ beginning at that vertex and ending at another vertex such that $h_1b < a < 1 + h_1b$.
We conclude that $\sigma_1$ wins in the region of $\Omega_1$ with $x = 1$ and
$-h_1 < y \leq 1 - h_1$.

\textbf{Finishing Touches.} Our case work gives us that the $\lambda_i$ vectors for $1 \leq i \leq n$ are exactly the set of saddle connection vectors which win at points $(1, y) \in \Omega_1$.
By Lemma \ref{check_on_x=1}, the $\lambda_i$ vectors are therefore the complete set of vectors which win at any point in $\Omega_1$.

Now, in order to completely describe the return time on $\Omega_1$, we must simply determine which $\lambda_i$ wins over the other $\lambda_j$ vectors at each point in $\Omega_1$.

By Lemma \ref{lem:order}, we know that the $\lambda_i$ vectors are in order of decreasing slope, so the vector which wins at $(x, y) \in \Omega_1$ is $\lambda_i$ with maximal $i$ such that $ax + by \leq 1$.
Well, $\lambda_n$ has the maximal $i$ value among the all $\lambda_i$ vectors, so $\lambda_n = \cvec{h_1}{1}$ wins at every $(x, y) \in \Omega_1$ with $h_1x + y \leq 1$; this is exactly the region $P_n$.
Thus the return time at $(x, y)$ will be the slope of $M_{x, y}\lambda_n$, which is
\[\frac{v_0}{x(h_1x + v_0y)}.\]
For $\lfloor n/2 \rfloor + 2 \leq i < n$, $\lambda_i = \sigma_{n-i+1}$ wins at every point $(x, y) \in \Omega_1$ with $h_{n-i+1}x + yv_{n-i} \leq 1$ where $\lambda_j$ for $j>i$ does not satisfy the same condition, or equivalently, $h_{n-j+1}x + yv_{n-j} > 1$ for $i < j \leq n$.
For $j < k < n$ and for all $(x, y) \in \Omega_1$, if $h_{n-j+1}x + yv_{n-j} > 1$ and $h_1x + y > 1$ then $h_{n-k+1}x + yv_{n-k} > 1$ as well, meaning that the above conditions can be simplified to $h_{n-i}x + yv_{n-i-1} > 1$ and $h_1x + y > 1$, which along with the aforementioned condition $h_{n-i+1}x + yv_{n-i} \leq 1$ exactly defines $P_i$.
Thus the return time at $(x, y)$ will be the slope of $M_{x, y}\lambda_i$, which is
\[\frac{v_{n-i}}{x(h_{n+1-i}x + v_{n-i}y)}.\]
For $i = \lfloor n/2 \rfloor + 1$, $\lambda_i = \nu_{\lfloor n/2 \rfloor - 1}$ wins at every point $(x, y) \in \Omega_1$ with $h_{\lfloor n/2 \rfloor - 1}x + yv_{\lfloor n/2 \rfloor - 1} \leq 1$ where $\lambda_j$ for $j>i$ does not satisfy the same condition, or equivalently, $h_jx + yv_{j - 1} > 1$ for all $1 \leq j < \lceil n/2 \rceil$.
As noted earlier, this condition can be simplified to $h_{\lceil n/2 \rceil - 1}x + yv_{\lceil n/2 \rceil - 2} > 1$ and $h_1x + y > 1$, which along with the aforementioned condition $h_{\lfloor n/2 \rfloor - 1}x + yv_{\lfloor n/2 \rfloor - 1} \leq 1$ exactly defines $P_i$.
Thus the return time for $(x, y) \in P_i$ will be the slope of $M_{x, y}\lambda_i$, which is
\[\frac{v_{\lfloor n/2 \rfloor - 1}}{x(h_{\lfloor n/2 \rfloor - 1}x + v_{\lfloor n/2 \rfloor - 1}y)}.\]
For $1 < i \leq \lfloor n/2 \rfloor$, $\lambda_i = \nu_{i - 2}$ wins at every point $(x, y) \in \Omega_1$ with $h_{i - 2}x + yv_{i - 2} \leq 1$ where $\lambda_j$ for $j>i$ does not satisfy the same condition, or equivalently, $h_jx + yv_j > 1$ for all $j > i$, $h_jx + yv_{j - 1} > 1$ for all $1 \leq j < \lceil n/2 \rceil$.
For $j < k$ and for all $(x, y) \in \Omega_1$, if $h_jx + yv_j > 1$ and $h_1x + y > 1$ then $h_kx + yv_k > 1$ as well, and if $h_jx + yv_j > 1$ for any $j$ then $h_{k + 1}x + yv_k > 1$ for all $k$, meaning that the above conditions can be simplified to $h_{i - 2}x + yv_{i - 2} \leq 1$, $h_{i - 1}x + v_{i - 1}y > 1$, and $h_1x + y > 1$.
These conditions together exactly describe the region $P_i$.
Thus the return time for $(x, y) \in P_i$ will be the slope of $M_{x, y}\lambda_i$, which is
\[\frac{v_{i - 2}}{x(h_{i - 2}x + v_{i - 2}y)}.\]
Now the only $\lambda$ vector that remains is $\lambda_1 = \cvec{0}{1}$.
Since this vector has higher slope than all other $\lambda$ vectors, it can only win within the region $\Omega_1 \setminus\bigcup_{i > 1} P_i$, which is exactly $P_1$.
$\lambda_1$ wins throughout $P_1$ because $0 \cdot x + 1 \cdot y = y \leq 1$ at every point in $P_1$.
Thus the return time for $(x, y) \in P_1$ will be the slope of $M_{x, y}\lambda_1$, which is
\[\frac{1}{xy}.\]

\end{proof}

\end{appendix}

\bibliography{gapsreferences}
\bibliographystyle{acm}

\end{document}